\newcommand{\alg}{\mathbf}
\newcommand{\set}[2]{\{ #1 \mid #2 \}}
\newcommand{\pair}[2]{\langle #1, #2 \rangle}
\newcommand{\sqleq}{\sqsubseteq}
\newcommand{\assign}{:=}
\newcommand{\unit}{\eta}
\newcommand{\dual}{\partial}
\newcommand{\bs}{\backslash}
\newcommand{\1}{1}
\newcommand{\0}{0}
\newcommand{\botbold}{\boldsymbol{\bot}}
\newcommand{\topbold}{\boldsymbol{\top}}
\newcommand{\intneg}{\mathop{\sim}}
\newcommand{\nagneg}{\mathop{\sim}}
\newcommand{\veegamma}{\vee_{\gamma}}
\newcommand{\cdotgamma}{\cdot_{\gamma}}
\newcommand{\bsgamma}{\bs\!\!\bs}
\newcommand{\sgamma}{/\!\!/}
\newcommand{\idmap}{\mathrm{id}}
\newcommand{\bsresright}{\makebox[8.5pt]{$\bs^{\scriptscriptstyle\!\!\ast}$}}
\newcommand{\bsresleft}{\makebox[8.5pt]{{$\kern 0.08667em_{\scriptscriptstyle\ast\!\!}\bs$}}}
\newcommand{\sresright}{\makebox[8.5pt]{{$/_{\scriptscriptstyle\!\!\ast}\kern 0.08667em$}}}
\newcommand{\sresleft}{\makebox[8.5pt]{$^{\scriptscriptstyle\ast\!\!} /$}}
\newcommand{\algL}{\alg{L}}
\newcommand{\algM}{\alg{M}}
\newcommand{\algN}{\alg{N}}
\newcommand{\algS}{\alg{S}}
\newcommand{\algNsigma}{\alg{N}_{\sigma}}
\newcommand{\algNgamma}{\alg{N}_{\gamma}}
\newcommand{\nagtimes}{\bowtie}
\newcommand{\algSmodM}{{\alg{S} \mathbin{\nagtimes} \alg{M}}}
\newcommand{\algSmodMzero}{{\alg{S} \mathbin{\nagtimes_{\0}} \alg{M}}}
\newcommand{\embed}{\iota}
\newcommand{\embedB}{\embed_{\scriptscriptstyle\alg{B}}}
\newcommand{\embedL}{\embed_{\scriptscriptstyle\alg{L}}}
\newcommand{\embedM}{\embed_{\scriptscriptstyle\alg{M}}}
\newcommand{\embedS}{\embed_{\scriptscriptstyle\alg{S}}}
\newcommand{\embedSplus}{\embed_{\scriptscriptstyle +}}
\newcommand{\embedSminus}{\embed_{\scriptscriptstyle -}}
\newcommand{\thistheoremref}{}
\newtheoremstyle{plainnewline}
{}{}
{\itshape}{}
{\bfseries}{.}
{\newline}{\thmname{#1}\thmnumber{ #2}\thistheoremref\thmnote{ (#3)}}
\theoremstyle{plainnewline}
\newtheorem{theorem}{Theorem}[section]
\newtheorem{proposition}[theorem]{Proposition}
\newtheorem{definition}[theorem]{Definition}
\theoremstyle{plain}
\newtheorem{fact}[theorem]{Fact}
\newtheorem{lemma}[theorem]{Lemma}
\newtheorem*{theorem*}{Theorem}
\newtheorem*{theoremA}{Theorem A}
\newtheorem*{theoremB}{Theorem B}
\author{Adam P\v{r}enosil}
\address{Universitat de Barcelona, Departament de Filosofia, Spain}
\email{adam.prenosil@ub.edu}
\author{Constantine Tsinakis}
\address{Vanderbilt University, Department of Mathematics, USA}
\email{constantine.tsinakis@vanderbilt.edu}
\keywords{}
\title[Nagata products of bimodules]{Nagata products of bimodules \\ over residuated lattices}
\begin{document}

\begin{abstract}
  We study the (restricted) Nagata product construction, which produces a partially ordered semigroup from a bimodule consisting of a partially ordered semigroup acting on a (pointed) join semilattice. A canonical \mbox{example} of such a bi\-module is given by a residuated lattice acting on itself by division, in which case the Nagata product coincides with the so-called twist product of the residuated lattice. We show that, given some further structure, a pointed bimodule can be reconstructed from its restricted Nagata product. This yields an ad\-junction between the category of cyclic pointed residuated bimodules and a certain category of posemigroups with additional structure, which subsumes various known adjunctions involving the twist product construction.
\end{abstract}

\maketitle

\section{Introduction}


\newlength{\abovedisplayskipaux}
\setlength{\abovedisplayskipaux}{\abovedisplayskip}
\newlength{\belowdisplayskipaux}
\setlength{\belowdisplayskipaux}{\belowdisplayskip}
\setlength{\abovedisplayskip}{8pt}
\setlength{\belowdisplayskip}{8pt}

\newlength{\auxlength}
\newlength{\auxlengthtwo}
\settowidth{\auxlength}{$a \ast \0$}
\settowidth{\auxlengthtwo}{$\embedM$}

  The \emph{idealization} of a bimodule $\algM$ over a (commutative) ring $\alg{R}$ is a classical construction due to Nagata~\cite{nagata62}, which produces a (commutative) ring $\alg{I}$ where $\algM$ sits as a nilpotent ideal. The additive group of $\alg{I}$ is the direct sum of the additive groups of $\alg{R}$ and $\algM$ and multiplication is defined in $\alg{I}$ as
\begin{align*}
  \pair{a}{x} \cdot \pair{b}{y} & \assign \pair{a \cdot b}{a \ast y + x \ast b} \text{ for } a, b \in \alg{R} \text{ and } x, y \in \algM,
\end{align*}
  where $\ast$ denotes the left and right module action of $\alg{R}$ on $\alg{M}$. This is precisely the multiplication law obtained when one multiplies out $a+x$ and $b+y$ and decides to disregard the term $x \cdot y$. Alternatively, if one views $\pair{a}{x}$ and $\pair{b}{y}$ as formal fractions, then the above definition of multiplication in $\alg{I}$ becomes formally identical to the formula for computing the sum of two fractions:
\begin{align*}
  \frac{x}{a} + \frac{y}{b} = \frac{a \cdot y + x \cdot b}{a \cdot b}.
\end{align*}
  Because $\algM$ may be identified inside $\alg{I}$ with the ideal consisting of elements of the form $\pair{0}{x}$, the idealization construction allows one to extend results about ideals to modules (see~\cite{anderson+winders09} for more details).

  The above construction does not in fact require the full structure of a module over a ring. The structure of a module over a semiring, where the additive Abelian group structure in $\alg{R}$ and $\alg{M}$ is replaced by a commutative semigroup structure, suffices. Within the very broad class of modules over semirings, ordinary modules over rings form one extreme case, while the opposite extreme case replaces Abelian groups by \emph{idempotent} commutative semigroups (semilattices).

  Modules over idempotent semirings arise naturally in the algebraic study of non-classical logics, where \emph{residuated lattices} form a prominent class of algebras. These are algebras $\alg{L} \assign \langle L, \wedge, \vee, \cdot, \1, \bs, / \rangle$ such that $\langle L, \wedge, \vee \rangle$ is a lattice, $\langle L, \cdot, \1 \rangle$ is a monoid, and the two division operations are the so-called \emph{residuals} of multiplication, meaning that they satisfy (and are uniquely determined by) the residuation law
\begin{align*}
  b \leq a \bs c \iff a \cdot b \leq c \iff a \leq c / b.
\end{align*}
  Each residuated lattice $\alg{L}$ has an idempotent semiring reduct $\langle L, \vee, \cdot, \1 \rangle$, with $\vee$ playing the role of addition, which acts on the semilattice reduct $\alg{L}^{\dual} \assign \langle L, \wedge \rangle$ by division, i.e.\ it comes with the following left and right action of elements $a, b \in \algL$ on elements $x, y \in \algL^{\dual}$:
\begin{align*}
  a \ast y & \assign y / a, & x \ast b & \assign b \bs x.
\end{align*}

  Applying Nagata's idealization construction to the above bimodule induced by a residuated lattice $\alg{L}$ then equips the set $L \times L$ with the following operations, where $\vee$ plays the role of addition and $\circ$ is called the :
\begin{align*}
  \pair{a}{x} \vee \pair{b}{y} & \assign \pair{a \vee b}{x \wedge y}, \\
  \pair{a}{x} \circ \pair{b}{y} & \assign \pair{a \cdot b}{a \ast y + x \ast b} = \pair{a \cdot b}{y / a \wedge b \bs x}.
\end{align*}
  Here and throughout the paper, multiplication and division bind more tightly than meets and joins, so $y / a \wedge b \bs x$ is to be parsed as $(y / a) \wedge (b \bs x)$. Because $\alg{L}$ and $\alg{L}^{\dual}$ are in fact lattices, the join operation $\vee$ has a corresponding meet operation $\wedge$:
\begin{align*}
  \pair{a}{x} \wedge \pair{b}{y} & \assign \pair{a \wedge b}{x \vee y}.
\end{align*}
  Moreover, because the division action of $\alg{L}$ on $\alg{L}^{\dual}$ has the additional feature (defined more precisely below) that it is residuated, the multiplication $\circ$ has the following residuals with respect to the lattice order:
\begin{align*}
  \pair{a}{x} \bs \pair{b}{y} & \assign \pair{a \bs b \wedge x / y}{y \cdot a}, \\
  \pair{a}{x} / \pair{b}{y} & \assign \pair{a / b \wedge x \bs y}{b \cdot x}.
\end{align*}
  This yields the so-called \emph{twist product} $\alg{L}^{\bowtie}$ of a residuated lattice $\alg{L}$. (The name comes from the fact that the order in the second but not the first component is inverted in $\alg{L}^{\bowtie}$ compared to the product order $\alg{L} \times \alg{L}$.)

  The above twist product construction on residuated lattices and the module-theoretic perspective which underpins it were introduced by Tsinakis \& Wille~\cite{tsinakis+wille06}, who were inspired by the work of Chu on so-called $\ast$-autonomous categories and Girard quantales (see \cite{rosenthal90} and~\cite[Appendix]{barr79}). Special cases of this construction were already studied earlier, a prominent instance being twist products of Heyting algebras, which provide an algebraic semantics for Nelson's constructive logic with strong negation~\cite{odintsov08}.

  The aim of the present paper is to develop a module-theoretic approach to twist products from the ground up, starting from partially ordered semigroups (posemigroups). In its bare bones form, the \emph{Nagata \mbox{product}} construction takes a bimodule consisting of a posemigroup $\algS$ (written multiplicatively) acting on a posemigroup~$\algM$ (written additively) and produces a posemigroup $\algSmodM$ whose universe is $S \times M$ ordered component\-wise equipped with the Nagata multiplication. Throughout the paper, we adopt the useful convention of~\cite{tsinakis+wille06} that elements of $\algS$ (the scalars) will be denoted $a, b, c$ and elements of $\algM$ will be denoted $x, y, z$. This makes the notation $a \ast x$ and $y \ast b$ unambiguous: it always denotes $a$ acting on $x$ on the left and $b$ acting on $y$ on the right, respectively. (Note, however, that we also use the variables $x, y, z$ when writing down formal equational axioms.)

  The effect of the \mbox{Nagata} product is to package a two-sorted algebra consisting of two posemigroups linked by a module action into a single-sorted algebra. The main theme of our paper is the relationship between these two structures. We would like to be able to move back and forth between the two-sorted and single-sorted perspectives, ideally via an equivalence between some appropriate categories of two-sorted and single-sorted structures. More concisely, the central question is:
\begin{align*}
  \text{When and how can we reconstruct an $\algS$-bimodule $\algM$ from its Nagata product?}
\end{align*}

  In order to recover the original bimodule from its Nagata product, we will need to impose certain restrictions. Firstly, we shall require that $\algM$ be a join semilattice (i.e.\ both the semigroup reduct and the poset reduct of the posemigroup $\algM$ are the same join semilattice). Secondly, we shall require that $\alg{S}$ is a meet semilattice with a residuated semigroup multiplication and the action of $\algS$ on $\algM$ is a \emph{residuated action}. The latter requirement means that we have four additional division-like operations connecting $\algS$ and $\algM$ which satisfy the following residuation laws:
\begin{align*}
  a \leq y \sresleft x \iff a \ast x \leq y \iff x \leq a \bsresleft y, \\
  x \leq y \sresright a \iff x \ast a \leq y \iff a \leq x \bsresright y.
\end{align*}
  As a consequence, the Nagata product $\algSmodM$ is residuated, with
\begin{align*}
  \pair{a}{x} \bs \pair{b}{y} & \assign \pair{a \bs b \wedge x \bsresright y}{a \bsresleft y}, \\
  \pair{a}{x} / \pair{b}{y} & \assign \pair{a / b \wedge x \sresleft y}{x \sresright b}.
\end{align*}
  This was already observed in~\cite{tsinakis+wille06}. 

  Thirdly, in order to identify copies of $\alg{S}$ and $\alg{M}$ inside $\algSmodM$, we require that $\algM$ be equipped with a constant $\0 \in \algM$ which is \emph{cyclic} in the sense that
\begin{align*}
  a \ast \0 = \0 \ast a \text{ for each } a \in \algS.
\end{align*}
  Such a bimodule will be called a \emph{cyclic pointed} bimodule. Using the constant $\0$, we define the idempotent maps $\sigma$ and $\gamma$ on $\algSmodMzero$ as follows:
\begin{align*}
  & \sigma\colon \pair{a}{x} \mapsto  \pair{a}{a \ast \0} = \pair{a}{\0 \ast a}, \\
  & \gamma\colon \pair{a}{x} \mapsto \pair{\makebox[\auxlength]{$\0 \bsresright x$}}{x} = \pair{\makebox[\auxlength]{$x \sresleft \0$}}{x}.
\end{align*}
  Both are isotone maps, and $\sigma$ is moreover a semigroup homomorphism. Its image $(\algSmodM)_{\sigma}$ is therefore a subposemigroup of $\algSmodM$, while the image $(\algSmodM)_{\gamma}$ of the map $\gamma$ is a subposet of $\algSmodM$ which is a join semilattice. The posemigroup $(\algSmodM)_{\sigma}$ and the join semilattice $(\algSmodM)_{\gamma}$ are isomorphic to $\algS$ and $\algM$ via
\begin{align*}
  \makebox[\auxlengthtwo]{$\embedS$}\colon & a \mapsto \pair{a}{a \ast \0} = \pair{a}{\0 \ast a}, \\
 \embedM\colon & x \mapsto \pair{\makebox[\auxlength]{$\0 \bsresright x$}}{x} = \pair{\makebox[\auxlength]{$x \sresleft \0$}}{x}.
\end{align*}
  The action of the original $\alg{S}$-bimodule $\alg{M}$ is recovered, up to the isomorphism $\pair{\embedS}{\embedM}$, as the following action of $(\algSmodM)_{\sigma}$ on $(\algSmodM)_{\gamma}$: 
\begin{align*}
  a \ast x & \assign \gamma(a \cdot x), & x \ast a & \assign \gamma(x \cdot a).
\end{align*}
  To fully recover the original cyclic pointed bimodule from its restricted Nagata \mbox{product}, it remains to expand $\algSmodMzero$ by the constant $\embedM(\0)$.

  The only troublesome aspect of the above construction is that the resulting posemigroup might lack a multiplicative unit, even if $\alg{S}$ is a monoid and $\alg{M}$ is bounded. In order to obtain a unital structure, we will need to restrict to a subset of the Nagata product. While the Nagata product of a bimodule is an ordered algebra over the set of all pairs of the form $\pair{a}{x}$, the universe of the \emph{restricted} Nagata product of a cyclic pointed $\alg{S}$-bimodule $\alg{M}$ we restrict to the subset
\begin{align*}
  \algSmodMzero \assign \set{\pair{a}{x} \in \algSmodM}{a \ast \0 \leq x}.
\end{align*}
  This is a subposemigroup of $\algSmodM$, which is a residuated subposemigroup if the action of $\alg{S}$ on $\alg{M}$ is residuated and $\alg{S}$ is a residuated meet semilattice. If $\algS$ moreover has a multiplicative unit~$\1$ such that $\1 \ast x = x = x \ast \1$, then $\algSmodMzero$ is a monoid with multiplicative unit $\pair{\1}{\0}$. In fact, in this case $\algSmodMzero$ is obtained from $\algSmodM$ as the image of the \emph{double-division conucleus} $\delta_{p}$ of Galatos \& Jipsen~\cite{galatos+jipsen20} with respect to the element $p \assign \pair{\1}{\0}$. Restricting to $\algSmodMzero$ has another beneficial side effect, namely that the restriction of $\sigma$ to $\algSmodMzero$ is an interior operator (in addition to being a homomorphism of monoids) and the restriction of $\gamma$ to $\algSmodMzero$ is a closure operator. In particular, the map $\sigma$ is a \emph{conucleus} on $\algSmodMzero$, i.e.\ it satisfies
\begin{align*}
  \sigma m \circ \sigma n \leq \sigma(m \circ n).
\end{align*}

  The key property of the maps $\sigma$ and $\gamma$ which enables us to recover the bimodule action is the following compatibility condition, which we call \emph{structurality}:
\begin{align*}
  \sigma m \cdot \gamma n \leq \gamma (m \cdot n), \\
  \gamma n \cdot \sigma m \leq \gamma (n \cdot m).
\end{align*}
  Indeed, in any posemigroup $\algN$ equipped with a conucleus $\sigma$ and a closure operator $\gamma$ linked by this condition, the posemigroup of $\sigma$-open elements $\algN_{\sigma}$ acts on the poset of $\gamma$-closed elements $\algN_{\gamma}$ via the action defined above. Moreover, if the posemigroup $\algN$ is residuated, then the action of $\algNsigma$ on $\algNgamma$ is also residuated:
\begin{align*}
  x \bsresright y & = \sigma(x \bs y), & a \bsresleft x & = \gamma (a \bs x), \\
  x \sresleft y & = \sigma(x / y), & x \sresright a & = \gamma (x / a).
\end{align*}
  This yields what we call the \emph{structural bimodule} induced by $\langle \algN, \sigma, \gamma \rangle$. If $\algN$ is further equipped with a constant $\0 \in \algNgamma$, what we obtain is a pointed bimodule.

  These two constructions allow us to move back and forth between the two-sorted and single-sorted perspectives: the \emph{(restricted) Nagata \mbox{product} functor} trans\-forms a cyclic pointed residuated bimodule into a posemigroup equipped with ad\-ditional structure, and the \emph{structural bimodule functor} transforms a posemigroup with such additional structure into a pointed residuated bimodule.

  The main result of the present paper states that these two functors form an adjunction if we restrict to the appropriate category of single-sorted structures. This is encapsulated in the definition of Nagata posemigroups (Definition~\ref{def: nagata structures}) and Nagata residuated lattices (Definition~\ref{def: nagata rls}). The former is a minimalist definition: Nagata posemigroups have the minimal amount of structure for the adjunction to go through. The latter is maximalist: Nagata residuated lattices have all the meets, joins, and residuals one might wish for.

\begin{theoremA}[see Theorem~\ref{thm: nagata structures}]
  The restricted Nagata product functor from the \mbox{category} of cyclic pointed residuated bimodules to the \mbox{category} of Nagata posemigroups is right adjoint to the structural bimodule functor. The unit of the adjunction is the map ${m \mapsto \pair{\sigma m}{\gamma m}}$, the counit is the inverse of the iso\-morphism $\pair{\embedS}{\embedM}$.
\end{theoremA}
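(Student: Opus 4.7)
The plan is to verify the adjunction by exhibiting explicit unit and counit natural transformations and checking the two triangle identities, leveraging the reconstruction facts already recorded just before the theorem statement.

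The counit is essentially free: the excerpt establishes that $\pair{\embedS}{\embedM}\colon \alg{M} \to F(G(\alg{M}))$ is an isomorphism of cyclic pointed residuated bimodules, where $F$ is the structural bimodule functor and $G$ is the restricted Nagata product functor. Declaring $\varepsilon_{\alg{M}}$ to be the inverse of $\pair{\embedS}{\embedM}$ thus yields a morphism which is automatically an isomorphism, and naturality drops out of the componentwise definition of $\embedS$ and $\embedM$ once one checks that any bimodule morphism $\alg{M} \to \alg{M}'$ commutes separately with each.

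The heart of the proof is showing that the proposed unit $\eta_{\alg{N}}\colon m \mapsto \pair{\sigma m}{\gamma m}$ is a morphism of Nagata posemigroups into $G(F(\alg{N}))$. First, one checks that the pair lands in the restricted Nagata product, i.e.\ that $\sigma m \ast \gamma \0 \leq \gamma m$, where $\ast$ is the structural bimodule action $a \ast x = \gamma(a \cdot x)$ on $F(\alg{N})$; this unfolds to $\gamma(\sigma m \cdot \0) \leq \gamma m$ and should drop out of the structurality law together with the Nagata posemigroup axioms governing the constant $\0$. Commutation with $\sigma$ and $\gamma$, preservation of $\0$, and order-preservation are then immediate. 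Preservation of multiplication reduces to verifying
\begin{align*}
  \sigma(m \cdot n) & = \sigma m \cdot \sigma n, & \gamma(m \cdot n) & = \gamma\bigl(\sigma m \cdot \gamma n \vee \gamma m \cdot \sigma n\bigr),
\end{align*}
where the first identity uses the conucleus property of $\sigma$ combined with closure of $\alg{N}_{\sigma}$ under multiplication, and the second combines structurality (for $\leq$) with the Nagata axiom forcing every $m$ to be recoverable from its $\sigma$- and $\gamma$-parts (for $\geq$). Naturality of $\eta$ is then immediate from the requirement that morphisms of Nagata posemigroups commute with $\sigma$ and $\gamma$.

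For the triangle identities, $F(\eta_{\alg{N}})$ sends $a \in \alg{N}_{\sigma}$ to $\pair{a}{a \ast \0} = \embedS(a)$ and $x \in \alg{N}_{\gamma}$ to $\pair{\0 \bsresright x}{x} = \embedM(x)$, so postcomposing with $\varepsilon_{F(\alg{N})}$ (the inverse of $\pair{\embedS}{\embedM}$) recovers the identity on $F(\alg{N})$. Conversely, for $\pair{a}{x} \in G(\alg{M})$ the composite $G(\varepsilon_{\alg{M}}) \circ \eta_{G(\alg{M})}$ first produces $\pair{\embedS(a)}{\embedM(x)}$ and then inverts componentwise back to $\pair{a}{x}$. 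The main obstacle I anticipate is the second multiplication identity for $\eta$: structurality alone delivers only the $\leq$ direction, so pinning down exactly which axioms of Definition~\ref{def: nagata structures} force the reverse inequality, and doing so without hidden circularity against the already-established reconstruction statement, is the crux of the argument. Everything else reduces to bookkeeping against the explicit formulas for $\embedS$, $\embedM$, and the structural bimodule action.
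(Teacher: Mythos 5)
Your overall architecture (unit and counit plus the two triangle identities, with the counit supplied by the reconstruction theorem, Theorem~\ref{thm: counit is iso}) is the same as the paper's proof of Theorem~\ref{thm: nagata structures}. The genuine problem sits exactly at the step you flag as the crux. You propose to \emph{derive} $\sigma(m \cdot n) = \sigma m \cdot \sigma n$ from the conucleus property, and $\gamma(m \cdot n) = \gamma(\sigma m \cdot \gamma n) \sqcup \gamma(\gamma m \cdot \sigma n)$ from structurality together with the quasi-inequality. Neither derivation can work: even a genuine conucleus only yields $\sigma m \cdot \sigma n \leq \sigma(m \cdot n)$, and the quasi-inequality ($\sigma x \leq \sigma y$ and $\gamma x \leq \gamma y$ imply $x \leq y$) only gives order-reflection, hence injectivity of $\unit$; injectivity of a map does not make it a homomorphism, and nothing in the remaining hypotheses forces $\gamma(m \cdot n)$ to be determined componentwise by the $\sigma$- and $\gamma$-parts of $m$ and $n$. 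In the paper there is nothing to derive here: these two equalities are among the \emph{defining axioms} of Nagata posemigroups in Definition~\ref{def: nagata structures}, and the proof consists precisely in observing that each preservation requirement for $\unit$ is equivalent to one of the listed axioms. As written, your plan for the key step would fail, because the identities you set out to prove are independent of the weaker properties you invoke.

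There is a second, related under-count. You call commutation of $\unit$ with $\sigma$, $\gamma$, and $\0$ ``immediate'', but in the restricted Nagata product of the structural bimodule one has $\sigma\pair{a}{x} = \pair{a}{\gamma(a \cdot \0)}$ and $\gamma\pair{a}{x} = \pair{\sigma(\0 \bsgamma x)}{x}$, with the constant interpreted as $\pair{\sigma(\0 \bsgamma \0)}{\0}$; so these commutations are exactly the content of the four axioms $\gamma \sigma x = \gamma(\sigma x \cdot \0)$, $\gamma \sigma x = \gamma(\0 \cdot \sigma x)$, $\sigma \gamma x = \sigma(\0 \bsgamma \gamma x)$, and $\sigma \gamma x = \sigma(\gamma x \sgamma \0)$, not automatic facts. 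You also never address preservation of the operations $\bsgamma$ and $\sgamma$, which belong to the signature of Nagata posemigroups and require the residual axioms $\gamma(x \bsgamma \gamma y) = \sigma x \bsgamma \gamma y$ and $\sigma(x \bsgamma \gamma y) = \sigma(\gamma x \bsgamma \gamma y)$ together with their mirror images. The remaining ingredients of your sketch --- the membership check $\gamma(\sigma m \cdot \0) \leq \gamma m$ in the restricted case, the triangle identities, and taking the counit to be the inverse of $\pair{\embedS}{\embedM}$ --- do agree with the paper's argument.
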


\begin{theoremB}[see Theorem~\ref{thm: nagata rls}]
  The restricted Nagata product functor from the \mbox{category} of cyclic pointed residuated lattice-ordered bimodules over a residuated lattice to the \mbox{category} of Nagata residuated lattices is the right adjoint of the \mbox{structural} bimodule functor. The unit of this adjunction is the embedding ${{m \mapsto \pair{\sigma m}{\gamma m}}}$, the counit is the inverse of the isomorphism $\pair{\embedS}{\embedM}$.
\end{theoremB}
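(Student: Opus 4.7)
The overall strategy is to bootstrap from Theorem A by adding lattice and residual structure on top of the posemigroup-level adjunction already established there. Both functors are defined on the same underlying carriers as in Theorem A; what must be verified is that they in fact land in the richer categories of Nagata residuated lattices and of cyclic pointed residuated lattice-ordered bimodules, and that the unit and counit are homomorphisms in these richer categories.

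First, I would verify that the restricted Nagata product of a cyclic pointed residuated lattice-ordered bimodule $\algM$ over a residuated lattice $\algS$ is a Nagata residuated lattice. Meets and joins are defined componentwise in twist-product fashion, the multiplicative unit is $\pair{\1}{\0}$, and the residuals $\bs$ and $/$ are given by the explicit formulas displayed in the introduction. The verifications to perform are: that these residuals satisfy the residuation laws with respect to the Nagata multiplication, that they land inside $\algSmodMzero$ (using cyclicity of $\0$ and residuation of the action), and that the axioms of Definition~\ref{def: nagata rls} are satisfied. Symmetrically, from a Nagata residuated lattice $\langle \algN, \sigma, \gamma, \0 \rangle$, I would verify that $\algNsigma$ inherits a residuated lattice structure and that $\algNgamma$ is a lattice acted upon by $\algNsigma$ in a residuated lattice-ordered way via the formulas $x \bsresright y = \sigma(x \bs y)$, $a \bsresleft x = \gamma(a \bs x)$, and their mirror images for $/$, with the resulting bimodule being cyclic pointed.

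The heart of the argument is to show that the unit $m \mapsto \pair{\sigma m}{\gamma m}$ is a homomorphism of Nagata residuated lattices, not merely of posemigroups as already guaranteed by Theorem A. Preservation of joins and meets reduces to natural coordinate identities linking $\sigma$, $\gamma$, and the lattice structure of $\algN$, which should be forced by the axioms of Definition~\ref{def: nagata rls}. The preservation of the residuals $\bs$ and $/$ is the delicate step: unpacking the formula for the residual in the restricted Nagata product yields coordinate identities expressing $\sigma(m \bs n)$ and $\gamma(m \bs n)$ in terms of $\sigma$, $\gamma$, and the residuals of $\algN$ applied to $\sigma m$, $\sigma n$, $\gamma m$, $\gamma n$ (and similarly for $/$). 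These identities are precisely what Definition~\ref{def: nagata rls} is designed to enforce, so the verification reduces to matching the axioms against the required equalities. I expect this step to be the main obstacle, since it requires careful bookkeeping of how each residual decomposes into $\sigma$-open and $\gamma$-closed components.

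Finally, the counit is the inverse of the isomorphism $\pair{\embedS}{\embedM}$ of cyclic pointed residuated lattice-ordered bimodules established in the introductory discussion, so no further verification is needed beyond noting that the richer lattice and residual operations are carried along by this isomorphism. The triangle identities are inherited from Theorem A, as the underlying set-level maps are unchanged. Once the structure-preservation of the unit is in place, the adjunction is transported wholesale from the posemigroup setting to the residuated lattice setting.
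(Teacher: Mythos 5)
Your proposal follows essentially the same route as the paper: reduce everything to Theorem A, note that the counit is an isomorphism (hence automatically preserves all existing meets, joins, and residuals), and verify that the unit preserves meets, joins, and residuals using precisely the equations of Definition~\ref{def: nagata rls}, with the object-level checks (that the restricted Nagata product is a Nagata lattice and that the structural bimodule is lattice-ordered over a residuated lattice) handled separately. The only step you gloss over is that a (restricted) Nagata lattice is in particular a (restricted) Nagata posemigroup --- in the paper this is Lemma~\ref{lemma: nagata implication}, deriving the quasi-inequality $\sigma x \leq \sigma y \text{ and } \gamma x \leq \gamma y \implies x \leq y$ from the lattice axioms --- which is needed before Theorem A can be invoked at all.
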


  Nagata products, or more precisely twist products, are frequently studied in the signature with a \emph{bilattice} structure~\cite{rivieccio+maia+jung20}. Accordingly, we prove analogues of the above results for bilattices and sesquilattices (Definition~\ref{def: nagata rsls}) in Theorem~\ref{thm: nagata rls bilat}. In that case, we obtain a categorical equivalence, rather than a mere adjunction. Algebras of the latter kind feature only an additional semilattice structure on top of the usual lattice structure, rather than two lattice structures. This is necessitated by the fact that we talk about restricted Nagata products rather than the full Nagata product.

  A particular case of interest where bimodules arise is, as we saw above, when a residuated lattice acts on its order dual by division. Nagata products arising from bimodules of this form are the twist products of~\cite{tsinakis+wille06}, while the restricted Nagata products arising from such bimodules are the more general twist products of Busaniche et al.~\cite{busaniche+galatos+marcos22}. Twist products of this last kind in particular sub\-sume the twist product constructions of Sendlewski~\cite{sendlewski90} and Odintsov~\cite{odintsov04}.

  A more general set up, inspired by the non-involutive twist product of a linked pair of Heyting algebras studied by Rivieccio \& Spinks~\cite{rivieccio+spinks19}, arises from a pair of residuated lattices $\alg{L}_{+}$ and $\alg{L}_{-}$ linked by maps $\lambda\colon \alg{L}_{+} \to \alg{L}_{-}$ and $\rho\colon \alg{L}_{-} \to \alg{L}_{+}$. In this case, under suitable assumptions, $\alg{L}_{+}$ acts on $\alg{L}_{-}$ as follows:
\begin{align*}
  a \ast x & \assign x / \lambda a, & x \ast a & \assign \lambda a \bs x.
\end{align*}
  The Nagata product can then be equipped with the further operation
\begin{align*}
  \nagneg \pair{a}{x} & \assign \pair{\rho x}{\lambda a}.
\end{align*}
  We call quadruples $\langle \alg{L}_{+}, \alg{L}_{-}, \lambda, \rho \rangle$ of this kind \emph{twistable pairs} of residuated lattices (Definition~\ref{def: twist pair}), provided that the maps $\lambda$ and $\rho$ satisfy suitable conditions, and the Nagata construction applied to their associated modules is called the \emph{twist product} construction (Definition~\ref{def: twist products}). Theorems~\ref{thm: nagata posemigroup with negation adjunction}, \ref{thm: twist 2}, and \ref{thm: nagata structures with negation} are the analogues of the main adjunctions concerning Nagata products in the context of twist products. The twist product construction covers as special cases the non-involutive construction of quasi-Nelson algebras due to Rivieccio \& Spinks as well as the involutive construction of Busaniche et al. (See also~\cite{rivieccio+maia+jung20} for a detailed discussion of non-involutive twist products.) The latter construction deals with the case where the residuated lattices $\alg{L}_{+}$ and $\alg{L}_{-}$ essentially collapse into one, i.e.\ the case where $\lambda$ and $\rho$ are mutually inverse isomorphisms.

  In the final section, we use the Nagata construction to provide an alternative construction of the algebra of fractions of a Boolean-pointed Brouwerian algebra introduced in~\cite{galatos+prenosil22}. We show that this algebra of fractions is in fact the nucleus image of a conucleus image of a restricted Nagata product. This lends further credence to the intepretation of the restricted Nagata product as an algebra of formal fractions.

  One topic that we do not touch on in the present paper, despite it being a recurrent theme in the study of twist products, is the problem of restricting the Nagata adjunction to a categorical equivalence and the related problem of describing the left adjoint of the structural bimodule functor. The adjunctions of Sendlewski and Odintsov and in some cases also the adjunction of Busaniche et al.\ in fact yield categorical equivalences if we add further structure to the residuated lattices in question (in the form of a certain designated subset). This allows one to place further restrictions on which pairs $\pair{a}{x}$ are admitted in the twist product, which turns the units of these adjunctions into surjective maps. To obtain a Nagata equivalence rather than a Nagata adjunction would presumably require adding further structure to bimodules and moreover restricting to some suitable subclass of bimodules.


\setlength{\abovedisplayskip}{\abovedisplayskipaux}
\setlength{\belowdisplayskip}{\belowdisplayskipaux}

\section{Nuclei and conuclei on posemigroups}

  We start with a review of our terminology. The reader familiar with residuated lattices and related structures is advised to skip this review and refer to it only if if they need to. The only point which needs to emphasized to such readers is that we define conuclei as closure operators on posemigroups, therefore preserving the multiplicative unit is not part of the definition of a conucleus. A conucleus on a pomonoid which preserves the monoidal unit will be called a \emph{unital} conucleus.

  The most general algebraic structure that we shall encounter in this paper is a \emph{partially ordered semigroup} or \emph{posemigroup}. This is a structure $\algS = \langle S, \leq, \cdot \rangle$ such that $\langle S, \leq \rangle$ is a poset and $\langle S, \cdot \rangle$ is a semigroup whose multiplication is isotone in both arguments with respect to the partial order. A \emph{partially ordered monoid} or \emph{pomononoid} is both a posemigroup and a monoid. A \emph{pointed} posemigroup is one expanded by an (arbitrary) constant $\0$.

  A \emph{residuated posemigroup} is a structure $\algS = {\langle S, \leq, \cdot, \bs, / \rangle}$ such that ${\langle S, \leq, \cdot, \rangle}$ is a posemigroup and the binary operations $x \bs y$ and $x / y$ are the residuals of multiplication, i.e.\ they satisfy the equivalences
\begin{align*}
  y \leq x \bs z \iff x \cdot y \leq z \iff x \leq z / y.
\end{align*}
  A \emph{residuated pomonoid} is a structure $\algS = \langle S, \leq, \cdot, 1, \bs, / \rangle$ such that $\langle S, \leq, \cdot, \bs, / \rangle$ is a residuated posemigroup and $\langle S, \cdot, 1 \rangle$ is a monoid. A \emph{residuated subposemigroup (submonoid)} of a residuated posemigroup (pomonoid) is obtained by restricting to a subset closed with respect to both multiplication and residuation.

  A \emph{residuated $\ell$-semigroup} is a structure $\algS = \langle S, \wedge, \vee, \cdot, \bs, / \rangle$ such that $\langle S, \wedge, \vee \rangle$ is a lattice and $\langle S, \leq, \cdot, \bs, / \rangle$ is a residuated posemigroup ordered by the lattice order. A \emph{residuated lattice} is a structure $\algS = \langle S, \wedge, \vee, \cdot, 1, \bs, / \rangle$ such that $\langle S, \wedge, \vee, \cdot, \bs, / \rangle$ is a residuated $\ell$-semigroup and $\langle S, \cdot, 1 \rangle$ is a monoid. In other words, residuated lattices are unital residuated $\ell$-semigroups.

  An \emph{interior operator} on a poset $P$ is an isotone map $\sigma\colon P \to P$ and a \emph{closure operator} on a poset $P$ is an isotone map $\gamma\colon P \to P$ such that
\begin{align*}
  & \sigma \sigma x = \sigma x \leq x, & & x \leq \gamma x = \gamma \gamma x.
\end{align*}
  The posets of fixpoints of $\sigma$ and $\gamma$ will be denoted $P_{\sigma}$ and $P_{\gamma}$. If the join $x \vee y$ exists in $P$ for $x, y \in P_{\sigma}$, then the join also exists in $P_{\sigma}$ and coincides with $x \vee y$. If the meet $x \wedge y$ exists in $P$ for $x, y \in P_{\sigma}$, then the meet also exists in $P_{\sigma}$ and is computed as $\sigma(x \wedge y)$. If the join $x \vee y$ exists in $P$ for $x, y \in P_{\gamma}$, then the join also exists in $P_{\gamma}$ and is computed as $\gamma(x \vee y)$. If the meet $x \wedge y$ exists in $P$ for $x, y \in P_{\gamma}$, then the meet also exists in $P_{\gamma}$ and coincides with $x \wedge y$.

  A \emph{conucleus} on a posemigroup $\algS$ is an interior operator~$\sigma$ on $\algS$ such that
\begin{align*}
  \sigma x \cdot \sigma y & \leq \sigma (x \cdot y).
\end{align*}
  The image $\algS_{\sigma}$ of this interior operator $\sigma$ is a subposemigroup of $\algS$. If $\algS$ is residuated, then so is $\algS_{\sigma}$, the residuals being $\sigma(x \bs y)$ and $\sigma(x / y)$. If $\algS$ has a multiplicative unit~$\1$, then $\algS_{\sigma}$ has a multiplicative unit $\sigma(\1)$. We call the conucleus \emph{unital} if $\sigma(\1) = \1$. 

  A \emph{nucleus} on a posemigroup $\algS$ is a closure operator~$\gamma$ on $\algS$ such that
\begin{align*}
  \gamma x \cdot \gamma y & \leq \gamma (x \cdot y).
\end{align*}
  The image $\algS_{\gamma}$ of this closure operator $\gamma$ is a posemigroup with multiplication $x \cdotgamma y \assign \gamma(x \cdot y)$, the order being the restriction of the order of $\algS$. If $\algS$ is residuated, then so is $\algS_{\gamma}$, and the residuals in the two posemigroups coincide. If $\algS$ has a multiplicative unit~$\1$, then $\algS_{\gamma}$ has a multiplicative unit $\gamma(\1)$.

\section{Modules over posemigroups}

  We now introduce bimodules over posemigroups. Bimodules as defined here always consist (at least) of a posemigroup acting on a \emph{join semilattice}. One might also call such structures bisemimodules because they act on posemigroups rather than pogroups, but let us not complicate our terminology needlessly.

  Let $\algS = \langle S, \leq, \cdot \rangle$ be a posemigroup and $\algM = \langle M, \vee \rangle$ be a join semilattice. The semilattice order of $\algM$ will also be denoted by $\leq$. Throughout the paper, we adopt the following notational convention introduced in~\cite{tsinakis+wille06} whenever applicable:
\begin{align*}
  \textbf{$a, b, c$ are elements of $\algS$ (scalars) and $x, y, z$ are elements of~$\algM$.}
\end{align*}

  A \emph{biaction} on $\algS$ on $\algM$ (or more generally on a poset) consists of a \emph{left action} ${\ast\colon S \times M \to M}$ and a \emph{right action} ${\ast\colon M \times S \to M}$ which are isotone maps such that for all $x \in \algM$ and $a, b \in \algS$
\begin{align*}
  (a \cdot b) \ast x & = a \ast (b \ast x), \\
  x \ast (a \cdot b) & = (x \ast a) \ast b, \\
  (a \ast x) \ast b & = a \ast (x \ast b).
\end{align*}
  An element $\1 \in \algS$ is a \emph{unit} for the biaction if
\begin{align*}
  \1 \ast x = x = x \ast \1 \text{ for each } x \in \algM.
\end{align*}
  An element $\0 \in \algM$ is a \emph{zero} for the biaction if
\begin{align*}
  a \ast \0 = \0 = \0 \ast a \text{ for each } a \in \algS.
\end{align*}
  A biaction is \emph{commutative} if $a \ast x = x \ast a$ for each $a \in \algS$ and $x \in \algM$. Observe that we can afford to use the same symbol for the left and right action thanks to our notational convention about the types of $a, b, c$ and $x, y, z$.

  A \emph{bimodule}, or more explicitly an \emph{$\alg{S}$-bimodule}, is a two-sorted ordered algebra which consists of a posemigroup $\algS$, a join semilattice $\algM = \langle M, \vee \rangle$, and a biaction of $\algS$ on $\algM$ such that for all $x, y \in \algM$ and $a \in \algS$
\begin{align*}
  a \ast (x \vee y) & = (a \ast x) \vee (a \ast y), \\
  (x \vee y) \ast a & = (x \ast a) \vee (y \ast a).
\end{align*}

  A biaction of $\algS$ on $\algM$ (or more generally on a poset) is \emph{residuated} if there are maps ${\bsresleft\colon S \times M \to M}$ and $\sresleft\colon M \times M \to S$ (residuals of the left action) and maps ${\bsresright\colon M \times M \to S}$ and $\sresright\colon M \times S \to M$ (residuals of the right action) such that
\begin{align*}
  x \leq a \bsresleft y \iff a \ast x \leq y \iff a \leq y \sresleft x, \\
  x \leq y \sresright a \iff x \ast a \leq y \iff a \leq x \bsresright y.
\end{align*}
  The position of the star indicates whether the residual comes from the left action or the right action. A \emph{residuated bimodule} consists of a posemigroup $\alg{S}$, a join semilattice $\algM$, and a residuated biaction of $\alg{S}$ on $\alg{M}$ (together with the four residuals). The residuals of the biaction ensure that this is indeed a bimodule.

 A \emph{doubly residuated} bimodule is a residuated bimodule where moreover $\algS$ is a residuated posemigroup. That is, both the module action and scalar multiplication are residuated in a doubly residuated bimodule.

  A \emph{unital} bimodule is a bimodule where $\algS$ is equipped with a multiplicative unit $\1$ which is also a unit for the biaction. A \emph{pointed} bimodules is one endowed with a constant $\0 \in \algM$. In a \emph{cyclic} pointed bimodule $\0$ is a cyclic element, i.e.\ $a \ast \0 = \0 \ast a$ for each $a \in \algS$. An \emph{$\ell$-bimodule} is one where $\algS$ and $\algM$ are lattices.

  If $\algM$ is a residuated bimodule and the join $\bigvee_{i \in I} x_{i}$ exists in $\algM$, then
\begin{align*}
  a \ast \bigg( \bigvee_{i \in I} x_{i} \bigg) & = \bigvee_{i \in I} (a \ast x_{i}), & \bigg( \bigvee_{i \in I} x_{i} \bigg) \ast a & = \bigvee_{i \in I} (x_{i} \ast a).
\end{align*}
  In particular, if the semilattice $\algM$ has a bottom element $\botbold$, then
\begin{align*}
  \botbold \ast a = \botbold = a \ast \botbold.
\end{align*}
  Moreover, if $\algM$ has either a top element $\topbold$ or a bottom element~$\botbold$, then $\algS$ must have a top element. This is because for all $x \in \algM$ and $a \in \algS$
\begin{align*}
  x \ast a \leq \topbold & \iff a \leq x \bsresright \topbold, & a \ast x \leq \topbold & \iff a \leq \topbold \sresleft x, \\
  \botbold \ast a \leq \botbold & \iff a \leq \botbold \bsresright \botbold, & a \ast \botbold \leq \botbold & \iff a \leq \botbold \sresleft \botbold,
\end{align*}
  therefore $\botbold \bsresright \botbold = \botbold \sresleft \botbold = x \bsresright \topbold = \topbold \sresleft x$ (for any $x \in \alg{M}$) is the top element of $\algS$.

\section{The Nagata product}

  Let $\algS$ be a join semilattice and $\algM$ be an $\algS$-bimodule.

\begin{definition}[The Nagata product $\algSmodM$]
  The posemigroup $\algSmodM$ consists of the universe $S \times M$ with the componentwise order and the semigroup operation
\begin{align*}
  \pair{a}{x} \circ \pair{b}{y} & \assign \pair{a \cdot b}{x \ast b \vee a \ast y}.
\end{align*}
\end{definition}

  The Nagata product is a commutative posemigroup whenever $\alg{S}$ is a commutative posemigroups and the biaction is commutative. If $\algS$ and $\algM$ are both join (meet) semilattices, then so is $\algSmodM$:
\begin{align*}
  \pair{a}{x} \wedge \pair{b}{y} & \assign \pair{a \wedge b}{x \wedge y}, &
  \pair{a}{x} \vee \pair{b}{y} & \assign \pair{a \vee b}{x \vee y}.
\end{align*}
  The Nagata product of a doubly residuated bimodule is a residuated posemigroup, provided that $\algS$ is a meet semilattice:
\begin{align*}
  \pair{a}{x} \bs \pair{b}{y} & \assign \pair{a \bs b \wedge x \bsresright y}{a \bsresleft y}, &
  \pair{a}{x} / \pair{b}{y} & \assign \pair{a / b \wedge x \sresleft y}{x \sresright b}.
\end{align*}
  The requirement that $\alg{M}$ be a join semilattice plays a key role here:
\begin{align*}
  \pair{a}{x} \circ \pair{b}{y} \leq \pair{c}{z} & \iff a \cdot b \leq c \text{ and } x \ast b \vee a \ast y \leq z \\
  & \iff a \cdot b \leq c \text{ and } x \ast b \leq z \text{ and } a \ast y \leq z \\
  & \iff b \leq a \bs c \text{ and } b \leq x \bsresright z \text{ and } y \leq a \bsresleft z \\
  & \iff \pair{b}{y} \leq \pair{a \bs c \wedge x \bsresright z}{a \bsresleft z}. \qedhere
\end{align*}
  Putting the above observations together, we obtain Theorem~3.3 of~\cite{tsinakis+wille06}, which we restate below in our current terminology.

\begin{theorem}[Residuated $\ell$-semigroups arising from bimodules]
  The Nagata product of a doubly residuated $\ell$-bimodule is a residuated $\ell$-semigroup with the lattice structure
\begin{align*}
  \pair{a}{x} \wedge \pair{b}{y} & \assign \pair{a \wedge b}{x \wedge y}, \\
  \pair{a}{x} \vee \pair{b}{y} & \assign \pair{a \vee b}{x \vee y},
\end{align*}
  and the semigroup structure
\begin{align*}
  \pair{a}{x} \circ \pair{b}{y} & \assign \pair{a \cdot b}{x \ast b \vee a \ast y},
\end{align*}
  and the two residuals
\begin{align*}
  \pair{a}{x} \bs \pair{b}{y} & \assign \pair{a \bs b \wedge x \bsresright y}{a \bsresleft y}, \\
  \pair{a}{x} / \pair{b}{y} & \assign \pair{a / b \wedge x \sresleft y}{x \sresright b}.
\end{align*}
\end{theorem}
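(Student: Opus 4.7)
The plan is to verify in sequence the three ingredients of a residuated $\ell$-semigroup: (a) that the componentwise meet and join endow $S \times M$ with a lattice structure, (b) that $\circ$ is associative and isotone, and (c) that the claimed expressions for $\bs$ and $/$ satisfy the residuation law.

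Step (a) is immediate since $\alg{S}$ and $\alg{M}$ are both lattices and the direct product of lattices is a lattice with componentwise operations. Step (b) is routine: isotonicity of $\circ$ follows from isotonicity of $\cdot$, of the two actions, and of $\vee$ in $\algM$. For associativity I would compute both $(\pair{a}{x} \circ \pair{b}{y}) \circ \pair{c}{z}$ and $\pair{a}{x} \circ (\pair{b}{y} \circ \pair{c}{z})$; the first coordinate is $a \cdot b \cdot c$ in both cases, while the second coordinate unfolds via the biaction axioms $(a \cdot b) \ast z = a \ast (b \ast z)$, $x \ast (b \cdot c) = (x \ast b) \ast c$, $(a \ast y) \ast c = a \ast (y \ast c)$, together with distributivity of the actions over $\vee$, into the common expression ${x \ast b \ast c \vee a \ast y \ast c \vee a \ast b \ast z}$.

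The substantive content is (c). The displayed calculation immediately preceding the theorem already establishes the equivalence $\pair{a}{x} \circ \pair{b}{y} \leq \pair{c}{z}$ iff $\pair{b}{y} \leq \pair{a \bs c \wedge x \bsresright z}{a \bsresleft z}$, which is exactly the residuation formula for $\bs$. I would derive the formula for $/$ by a mirror computation, rewriting $\pair{a}{x} \circ \pair{b}{y} \leq \pair{c}{z}$ as the conjunction $a \cdot b \leq c$, $a \ast y \leq z$, and $x \ast b \leq z$, and then applying the residual of $\cdot$ on $\alg{S}$ (giving $a \leq c / b$), the residual of the left action (giving $a \leq z \sresleft y$), and the residual of the right action (giving $x \leq z \sresright b$), to obtain $\pair{a}{x} \leq \pair{c / b \wedge z \sresleft y}{z \sresright b}$.

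The only conceptually nontrivial ingredient, and thus the only place where care is required, is the step which replaces the single inequality $x \ast b \vee a \ast y \leq z$ by the pair of inequalities $x \ast b \leq z$ and $a \ast y \leq z$. This uses precisely that $\alg{M}$ is a join semilattice; without $\vee$ available on the $\alg{M}$-side the joint inequality could not be split and residuation of $\circ$ would fail. This is exactly what motivates building the join semilattice structure on $\alg{M}$ into the very definition of a bimodule.
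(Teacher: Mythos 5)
Your proposal is correct and follows essentially the same route as the paper, which treats the theorem as a summary of the observations preceding it (componentwise lattice structure, the semigroup laws via the biaction axioms, and the displayed residuation computation splitting $x \ast b \vee a \ast y \leq z$ using that $\algM$ is a join semilattice), citing Tsinakis \& Wille for the result. Your mirror computation for $/$ and your emphasis on the join-semilattice hypothesis match the paper's intent exactly.
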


  One troublesome feature of the Nagata product construction is that it does not preserve unitality in a straight\-forward way. The remedy chosen by Tsinakis \& Wille was to assume that $\algM$ has a bottom element $\botbold$ which is a zero for the biaction, i.e.\ $a \ast \botbold = \botbold = \botbold \ast a$ for each $a \in \algS$. In that case $\algSmodM$ has a unit $\pair{\1}{\botbold}$.

  This is not entirely satisfactory. We shall see that \emph{any} element $\0 \in \algM$ will yield a multiplicative unit provided that we are willing to restrict to a suitable subalgebra of the Nagata product, which we call the \emph{restricted Nagata product}.

\begin{definition}[The restricted Nagata product $\algSmodMzero$]
  The posemigroup $\algSmodMzero$ is the subposemigroup of $\algSmodM$ with the universe
\begin{align*}
  \set{\pair{a}{x} \in \algSmodM}{\0 \ast a \leq x \text{ and } a \ast \0 \leq x}.
\end{align*}
\end{definition}  

  Observe that if $\0$ is the bottom element of $\algM$, then $a \ast \0 = \0 = \0 \ast a$ for all $a \in \algS$, and therefore $\algSmodMzero = \algSmodM$ in this case.

\begin{fact}
  The restricted Nagata product $\algSmodMzero$ of a pointed bimodule is indeed a subposemigroup of the Nagata product $\algSmodM$. Moreover:
\begin{enumerate}[(i)]
\item $\algSmodMzero$ has a multiplicative unit $\pair{1}{0}$ if the bimodule is unital.
\item $\algSmodMzero$ is a meet subsemilattice if $\algS$ and $\algM$ are meet semilattices.
\item $\algSmodMzero$ is a join subsemilattice if $\algS$ and $\algM$ are join semilattices and
\begin{align*}
  (a \vee b) \ast x & = (a \ast x) \vee (b \ast x), & x \ast (a \vee b) & = (x \ast a) \vee (x \ast b).
\end{align*}
\item $\algSmodMzero$ is a residuated subposemigroup if $\algS$ is a meet semilattice and the bimodule is doubly residuated.
\end{enumerate}
\end{fact}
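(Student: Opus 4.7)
The plan is to begin with the unnumbered closure claim---that $\algSmodMzero$ is actually closed under the Nagata multiplication---and then dispatch the four items in increasing order of difficulty. For closure, given $\pair{a}{x}, \pair{b}{y} \in \algSmodMzero$, I would use the biaction laws and isotonicity to obtain
\begin{align*}
  (a \cdot b) \ast \0 &= a \ast (b \ast \0) \leq a \ast y, \\
  \0 \ast (a \cdot b) &= (\0 \ast a) \ast b \leq x \ast b,
\end{align*}
so both are bounded above by $x \ast b \vee a \ast y$, as required.

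For (i), I would first note that $\pair{\1}{\0} \in \algSmodMzero$ (immediate from $\1 \ast \0 = \0 = \0 \ast \1$) and then compute $\pair{\1}{\0} \circ \pair{a}{x} = \pair{a}{x \vee \0 \ast a}$, which collapses to $\pair{a}{x}$ precisely because the defining membership condition forces $\0 \ast a \leq x$. The right unit law is symmetric, using $a \ast \0 \leq x$.

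Parts (ii) and (iii) are short monotonicity arguments. For (ii), the componentwise meet $\pair{a \wedge b}{x \wedge y}$ satisfies $(a \wedge b) \ast \0 \leq a \ast \0 \leq x$ and also $\leq b \ast \0 \leq y$, whence $(a \wedge b) \ast \0 \leq x \wedge y$; the right-action side is symmetric. Part (iii) is the analogous calculation, but with the extra join-distributivity hypothesis used to rewrite $(a \vee b) \ast \0$ as $(a \ast \0) \vee (b \ast \0) \leq x \vee y$.

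The hard part will be (iv), where I need to show that both residuals computed in $\algSmodM$ already lie in $\algSmodMzero$ whenever their arguments do. Setting $c \assign a \bs b \wedge x \bsresright y$ and $d \assign a \bsresleft y$ for the left residual, the condition $c \ast \0 \leq d$ unpacks by residuation to $(a \cdot c) \ast \0 \leq y$; since $c \leq a \bs b$ gives $a \cdot c \leq b$, this follows from $b \ast \0 \leq y$ by isotonicity. The condition $\0 \ast c \leq d$ unpacks to $(a \ast \0) \ast c \leq y$, which follows from $c \leq x \bsresright y$ (so $x \ast c \leq y$) combined with $a \ast \0 \leq x$. The right residual is handled by a mirror-symmetric argument. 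The subtlety worth flagging is that both halves of the membership condition on the arguments are genuinely needed here: the conjunct coming from $\pair{b}{y}$ supplies the first inequality, while the conjunct coming from $\pair{a}{x}$ supplies the second.
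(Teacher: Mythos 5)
Your proposal is correct and follows essentially the same route as the paper's proof: closure via the biaction laws $(a\cdot b)\ast\0 = a\ast(b\ast\0)$ and $\0\ast(a\cdot b)=(\0\ast a)\ast b$, the unit computation using both halves of the membership condition, componentwise meets/joins for (ii)--(iii) with the extra distributivity hypothesis in (iii), and for (iv) the two residuation unpackings $(a\cdot c)\ast\0\le b\ast\0\le y$ and $(a\ast\0)\ast c\le x\ast(x\bsresright y)\le y$, which are exactly the paper's two displayed inequalities. Your remark about which conjunct of each argument's membership condition feeds which inequality is an accurate reading of the same argument.
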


\begin{proof}
  Throughout this proof, consider $\pair{a}{x}, \pair{b}{y} \in \algSmodMzero$. That is,
\begin{align*}
  a \ast \0 & \leq x, & \0 \ast b & \leq y, \\
  \0 \ast a & \leq x, & b \ast \0 & \leq y.
\end{align*}
  Then $\pair{a}{x} \circ \pair{b}{y} = \pair{a \cdot b}{x \ast b \vee a \ast y}$ in $\algSmodM$ and
\begin{align*}
  \0 \ast (a \cdot b) & = (\0 \ast a) \ast b \leq x \ast b \leq x \ast b \vee a \ast y, \\
  (a \cdot b) \ast \0 & = a \ast (b \ast \0) \leq a \ast y \leq x \ast b \vee a \ast y,
\end{align*}
  therefore $\pair{a}{x} \circ \pair{b}{y} \in\algSmodMzero$. Clearly $\pair{\1}{\0} \in \algSmodMzero$ if the bimodule is unital, since $\0 \ast \1 = \0 = \1 \ast \0$. The pair $\pair{\1}{\0}$ is a multiplicative unit of $\algSmodMzero$ because
\begin{align*}
  \pair{a}{x} \cdot \pair{\1}{\0} & = \pair{a \cdot \1}{a \ast \0 \vee x \ast \1} = \pair{a}{a \ast \0 \vee x} = \pair{a}{x}, \\
  \pair{\1}{\0} \cdot \pair{a}{x} & = \pair{\1 \cdot a}{\1 \ast x \vee \0 \ast a} = \pair{a}{x \vee \0 \ast a} = \pair{a}{x}. \qedhere
\end{align*}
  Moreover,
\begin{align*}
  (a \wedge b) \ast \0 & \leq a \ast \0 \leq x, & \0 \ast (a \wedge b) & \leq \0 \ast a \leq x, \\
  (a \wedge b) \ast \0 & \leq b \ast \0 \leq y, & \0 \ast (a \wedge b) & \leq \0 \ast b \leq y,
\end{align*}
  so $\pair{a \wedge b}{x \wedge y} \in \algSmodMzero$. Similarly,
\begin{align*}
  (a \vee b) \ast \0 & = (a \ast \0) \vee (b \ast \0) \leq x \vee y, & & \0 \ast (a \vee b) = (\0 \ast a) \vee (\0 \ast b) \leq x \vee y,
\end{align*}
  so $\pair{a \vee b}{x \vee y} \in \algSmodMzero$. Finally, $\pair{a}{x} \bs \pair{b}{y} = \pair{a \bs b \wedge x \bsresright y}{a \bsresright y}$ in $\algSmodM$ and
\begin{gather*}
  a \ast \0 \ast (a \bs b \wedge x \bsresright y) \leq x \ast (x \bsresright y) \leq y, \\
  a \ast ((a \bs b \wedge x \bsresright y) \ast \0) \leq (a \cdot (a \bs b)) \ast \0 \leq b \ast \0 \leq y,
\end{gather*}
  therefore $\0 \ast (a \bs b \wedge x \bsresright y) \leq a \bsresleft y$ and $(a \bs b \wedge x \bsresright y) \ast \0 \leq a \bsresleft y$, i.e.\ $\pair{a}{x} \bs \pair{b}{y} \in \algSmodM$. Similarly, we can show that $\pair{a}{x} / \pair{b}{y} \in \algSmodMzero$. 
\end{proof}

  Putting the above observations together yields the following theorem.

\begin{theorem}[Residuated lattices arising from pointed bimodules]
  The restricted Nagata product of a doubly residuated unital pointed $\ell$-bimodule is a residuated lattice $\algSmodMzero$ which is a residuated sub-$\ell$-semigroup of $\algSmodM$.
\end{theorem}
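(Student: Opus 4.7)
The plan is to assemble this theorem directly from the residuated $\ell$-semigroup theorem for $\algSmodM$ stated just above and from the four clauses of the preceding Fact. Since the operations $\circ$, $\wedge$, $\vee$, $\bs$, $/$ on $\algSmodMzero$ are by definition the restrictions of the corresponding operations on $\algSmodM$, all the equational laws defining a residuated lattice (associativity of $\circ$, the lattice identities, the residuation equivalences) are inherited from $\algSmodM$ for free. Thus what actually needs checking is only that $\algSmodMzero$ is closed under each of these operations and contains a multiplicative unit.

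Closure under $\circ$, $\bs$, $/$ and $\wedge$, together with membership of $\pair{\1}{\0}$, are handed to us by clauses (i), (ii), (iv) of the Fact, using that a doubly residuated unital pointed $\ell$-bimodule has $\algS$ a (residuated) lattice --- hence a meet semilattice --- and a unit $\1$ for the biaction. The one clause that requires a side hypothesis is (iii), which demands that the biaction distribute over binary joins in the scalar argument: ${(a \vee b) \ast x = (a \ast x) \vee (b \ast x)}$ and dually on the right.

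This distributivity is exactly where double residuation does its work. By definition of a residuated biaction, the maps ${a \mapsto a \ast x}$ and ${a \mapsto x \ast a}$ are left adjoints (with right adjoints ${y \mapsto y \sresleft x}$ and ${y \mapsto x \bsresright y}$ respectively), and left adjoints preserve all existing joins. Since $\algS$ is an $\ell$-structure the binary joins $a \vee b$ exist, so the required distributive identities hold. Applying Fact (iii) then gives closure of $\algSmodMzero$ under $\vee$.

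Putting these observations together, $\algSmodMzero$ is a subset of $\algSmodM$ closed under $\circ$, $\wedge$, $\vee$, $\bs$, $/$ and containing the element $\pair{\1}{\0}$, which the Fact shows to be a multiplicative unit for $\circ$ on $\algSmodMzero$. Hence $\algSmodMzero$ is a residuated sub-$\ell$-semigroup of $\algSmodM$ and, with the distinguished unit $\pair{\1}{\0}$, a residuated lattice in its own right. The only subtle step is the justification of distributivity above; everything else is bookkeeping. I would present the proof as a single paragraph that cites the previous theorem, invokes each clause of the Fact in turn, and inserts the short adjointness argument to discharge the hypothesis of clause (iii).
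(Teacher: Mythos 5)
Your proposal is correct and takes essentially the same route as the paper, whose proof is simply to combine the preceding Fact (clauses (i)--(iv)) with the residuated $\ell$-semigroup theorem for $\algSmodM$, noting that all laws are inherited once closure under the operations and the unit $\pair{\1}{\0}$ are established. The paper leaves the hypothesis of clause (iii) implicit, and your adjointness argument (the maps $a \mapsto a \ast x$ and $a \mapsto x \ast a$ have right adjoints $y \mapsto y \sresleft x$ and $y \mapsto x \bsresright y$, hence preserve existing joins of scalars) correctly discharges it.
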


  The transition from $\algSmodM$ to $\algSmodMzero$, at least in the doubly residuated case, amounts to taking the image with respect to the \emph{double-division conucleus} of Galatos \& Jipsen~\cite{galatos+jipsen20}. Given a \emph{positive} element $p$ of a residuated posemigroup $\algN$, i.e.\ an element $p$ such that
\begin{align*}
  p \bs x, x / p \leq x \leq p \cdot x, x \cdot p \text{ for each } x \in \algN,
\end{align*}
  the double-division conucleus is the map
\begin{align*}
  \delta_{p} x \assign p \bs x / p.
\end{align*}
  The double-division conucleus provides a way of obtaining a residuated pomonoid from a residuated posemigroup: the image of this map is a residuated subposemigroup of $\algN$ which is a monoid with unit $p$. The image of $\delta_{p}$ consists of all elements of the form $p \bs x / p$, or equivalently of the form $p x p$, for some $x \in \algN$. Moreover, if $\algN$ is an $\ell$-semigroup, then the image of $\delta_{p}$ is a sub-$\ell$-semigroup of $\algN$.

  In our case, we take $p \assign \pair{\1}{\0}$. The image of $\delta_{p}$ then consists precisely of pairs $\pair{a}{x}$ such that $a \ast \0 \vee \0 \ast a \leq x$. In other words, $\algSmodMzero = (\algSmodM)_{\delta}$. This precisely mirrors the way double-division conuclei arise in the twist products studied by Busaniche et al.~\cite{busaniche+galatos+marcos22}, which we discuss in more detail later.\footnote{We are grateful to Nick Galatos for pointing out the relevance of the double-division conucleus in this context to us.}

\section{Recovering a pointed bimodule from its Nagata product}

  The (restricted) Nagata product can be constructed for any (pointed) bimodule. However, some restrictions are required if we are to recover the original (pointed) bimodule from its (restricted) Nagata product. Namely, we make the following assumption throughout this section:
\begin{align*}
  \textbf{$\algM$ is a cyclic pointed residuated $\algS$-bimodule.}
\end{align*}
  We shall define two idempotent maps $\sigma$ and $\gamma$ on $\algSmodM$ which will allow us to recover the $\algS$-bimodule $\algM$ by identifying $\algS$ with the image of $\sigma$ (via a map $\embedS$) and $\algM$ with the image of $\gamma$ (via a map $\embedM$).

  A meet $\bigwedge_{i \in i} \pair{a_{i}}{x_{i}}$ exists in $\algSmodM$ if and only if the meets $b \assign \bigwedge_{i \in I} a_{i}$ and $y \assign \bigwedge x_{i}$ exist in $\algS$ and $\algM$, in which case $\bigwedge_{i \in i} \pair{a_{i}}{x_{i}} = \pair{b}{y}$, and likewise for joins, since as a poset $\algSmodM$ is the product of the posets $\algS$ and $\algM$. If a meet which exists in $\algSmodM$ is computed in the same way, we call it a \emph{componentwise meet}. A non-trivial observation is that joins in $\algSmodMzero$ are always computed componentwise.
 
\begin{fact}
  The join $\bigvee_{i \in I} \pair{a_{i}}{x_{i}}$ exists in $\algSmodMzero$ if and only if the joins $a \assign \bigvee_{i \in I} a_{i}$ and $x \assign \bigvee_{i \in I} x_{i}$ exist in $\algM$ and $\algS$. In that case, $\bigvee_{i \in I} \pair{a_{i}}{x_{i}} = \pair{a}{x}$.
\end{fact}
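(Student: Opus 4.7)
The plan is to handle the two directions separately, with the reverse direction---that joins in $\algSmodMzero$ are forced to be componentwise---being the substantive one.

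For the forward direction, assume $a \assign \bigvee_{i \in I} a_i$ and $x \assign \bigvee_{i \in I} x_i$ exist in $\algS$ and $\algM$. I would first verify $\pair{a}{x} \in \algSmodMzero$: since the biaction is residuated, the maps $b \mapsto b \ast \0$ and $b \mapsto \0 \ast b$ have right adjoints $y \mapsto y \sresleft \0$ and $y \mapsto \0 \bsresright y$ and therefore preserve existing joins, so $a \ast \0 = \bigvee_{i \in I} (a_i \ast \0) \leq \bigvee_{i \in I} x_i = x$, and similarly $\0 \ast a \leq x$. Minimality among upper bounds in $\algSmodMzero$ is then immediate, since the order on $\algSmodMzero$ is componentwise, inherited from $\algSmodM$.

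For the reverse direction, suppose $\pair{c}{z} = \bigvee_{i \in I} \pair{a_i}{x_i}$ exists in $\algSmodMzero$. I would verify $z = \bigvee_{i \in I} x_i$ and $c = \bigvee_{i \in I} a_i$ separately, in each case producing, given an arbitrary upper bound in the relevant component, a pair in $\algSmodMzero$ whose domination by $\pair{c}{z}$ yields the required inequality. The critical device is cyclicity of $\0$: since $a \ast \0 = \0 \ast a$ for every $a \in \algS$, the residuals $y \sresleft \0$ and $\0 \bsresright y$ coincide, so the pair $\pair{y \sresleft \0}{y}$ automatically lies in $\algSmodMzero$. Concretely, given $y \geq x_i$ for all $i$, each $a_i \ast \0 \leq x_i \leq y$ gives $a_i \leq y \sresleft \0$, so $\pair{y \sresleft \0}{y}$ is an upper bound of the family and domination forces $y \geq z$. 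Given $b \geq a_i$ for all $i$ in $\algS$, I would instead form the finite join $w \assign z \vee (b \ast \0)$ in the semilattice $\algM$; cyclicity then gives $\pair{b}{w} \in \algSmodMzero$, which is an upper bound of the family and whose domination yields $b \geq c$.

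The forward direction is essentially routine once one invokes that residuated actions preserve existing joins. The main subtlety on the reverse side is realizing that the witnessing upper bound one needs for $y \geq z$ cannot be expected to share a coordinate with $\pair{c}{z}$; cyclicity of $\0$ is exactly what supplies a canonical scalar $y \sresleft \0$ so as to produce a pair in $\algSmodMzero$ with a prescribed second coordinate $y$.
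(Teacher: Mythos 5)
Your proof is correct and follows essentially the same route as the paper: the forward direction uses that the residuated action preserves existing joins to place $\pair{a}{x}$ in $\algSmodMzero$, and the converse uses exactly the paper's two witnessing upper bounds, namely $\pair{y \sresleft \0}{y}$ (the paper writes it as $\pair{\0 \bsresright z}{z}$) and $\pair{b}{z \vee b \ast \0}$, with cyclicity of $\0$ guaranteeing membership in $\algSmodMzero$.
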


\begin{proof}
  If the joins $a$ and $x$ exist, then $a \ast \0 = \bigvee_{i \in I} a_{i} \ast \0 \leq \bigvee_{i \in I} x_{i} \leq x$ because $\algM$ is residuated, hence $\pair{a}{x} \in \algSmodMzero$. It follows that $\pair{a}{x} = \bigvee_{i \in I} \pair{a_{i}}{x_{i}}$. Conversely, suppose that $\bigvee_{i \in I} \pair{a_{i}}{x_{i}} = \pair{b}{y}$ in $\algSmodMzero$. We show that $b = \bigvee_{i \in I} a_{i}$ and $y = \bigvee_{i \in I} x_{i}$. Suppose therefore that $a_{i} \leq c$ for each $i \in I$. Then $\pair{a_{i}}{x_{i}} \leq \pair{c}{c \ast \0 \vee y}$ for each $i \in I$, hence $\pair{b}{y} \leq \pair{c}{c \ast \0 \vee y}$ and $b \leq c$, showing that $\bigvee_{i \in I} a_{i} = b$. Similarly, suppose that $x_{i} \leq z$ for each $i \in I$. Then $\pair{a_{i}}{x_{i}} \leq \pair{\0 \bsresright z}{z}$ for each $i \in I$, since $\0 \ast a_{i} \leq x_{i} \leq z$, hence $\pair{b}{y} \leq \pair{\0 \bsresright z}{z}$ and $y \leq z$, showing that $\bigvee_{i \in I} x_{i} = y$. 
\end{proof}

\begin{proposition}[Recovering $\algS$ inside $\algSmodM$]
  The map $\sigma\colon \algSmodM \to \algSmodM$ defined as
\begin{align*}
  \sigma\colon \pair{a}{x} \mapsto \pair{a}{a \ast \0} = \pair{a}{\0 \ast a}
\end{align*}
  is an idempotent isotone map which preserves binary products and all existing joins. The restriction of $\sigma$ to $\algSmodMzero$ is a conucleus. Moreover, $(\algSmodMzero)_{\sigma} = (\algSmodM)_{\sigma}$. If $\algM$ is a unital $\algS$-bimodule, $\sigma$ is a unital conucleus on $\algSmodMzero$.
\end{proposition}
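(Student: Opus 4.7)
My plan is to verify each clause of the proposition separately, most of which reduce to short calculations; the key ingredient throughout is the cyclicity $a \ast \0 = \0 \ast a$ together with the associativity axioms for a biaction.

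\emph{Idempotency, isotonicity, and preservation of binary products.} Idempotency is immediate since $\sigma\pair{a}{x}=\pair{a}{a\ast\0}$ and $a\ast\0$ depends only on $a$. Isotonicity follows because the left action is isotone in its first argument. For the product, I would compute
\begin{align*}
  \sigma\pair{a}{x}\circ\sigma\pair{b}{y} &= \pair{a\cdot b}{(a\ast\0)\ast b \vee a\ast(b\ast\0)}
\end{align*}
and use associativity of the biaction together with cyclicity to see that $(a\ast\0)\ast b = a\ast(\0\ast b) = a\ast(b\ast\0) = (a\cdot b)\ast\0$, so both joinands collapse to $(a\cdot b)\ast\0$. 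Hence $\sigma\pair{a}{x}\circ\sigma\pair{b}{y}=\pair{a\cdot b}{(a\cdot b)\ast\0}=\sigma(\pair{a}{x}\circ\pair{b}{y})$. This is the step I expect to be the main (mild) obstacle, since it is the only place where cyclicity is genuinely needed.

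\emph{Preservation of existing joins.} Since $\algSmodM$ is the product poset of $\algS$ and $\algM$, any existing join $\bigvee_{i\in I}\pair{a_{i}}{x_{i}}$ is necessarily componentwise, so in particular $\bigvee_{i\in I}a_{i}$ exists in $\algS$. Because the biaction is residuated, the map $a\mapsto a\ast\0$ preserves existing joins, giving $(\bigvee a_{i})\ast\0=\bigvee(a_{i}\ast\0)$. Plugging this into the definition of $\sigma$ and using again that joins are componentwise yields $\sigma(\bigvee\pair{a_{i}}{x_{i}})=\bigvee\sigma\pair{a_{i}}{x_{i}}$.

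\emph{Conucleus property on $\algSmodMzero$.} I would first observe that $\sigma$ maps $\algSmodMzero$ into itself: $\sigma\pair{a}{x}=\pair{a}{a\ast\0}$ satisfies both $a\ast\0\leq a\ast\0$ and $\0\ast a = a\ast\0\leq a\ast\0$ (using cyclicity). Contractivity $\sigma\pair{a}{x}\leq\pair{a}{x}$ holds precisely because the defining condition of $\algSmodMzero$ reads $a\ast\0\leq x$. Combined with the idempotency, isotonicity, and the identity $\sigma m\circ\sigma n=\sigma(m\circ n)$ already established, this gives a conucleus (indeed one whose conucleus inequality is an equality).

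\emph{Image and unitality.} For $(\algSmodMzero)_{\sigma}=(\algSmodM)_{\sigma}$, both sides equal $\{\pair{a}{a\ast\0}\mid a\in\algS\}$: the inclusion $(\algSmodMzero)_{\sigma}\subseteq(\algSmodM)_{\sigma}$ is trivial, and any $\pair{a}{a\ast\0}\in(\algSmodM)_{\sigma}$ already lies in $\algSmodMzero$ by the verification of the previous paragraph, so it is a fixpoint of $\sigma$ restricted to $\algSmodMzero$. Finally, if $\algM$ is unital with unit $\1$ for the biaction, then $\sigma\pair{\1}{\0}=\pair{\1}{\1\ast\0}=\pair{\1}{\0}$, so $\sigma$ fixes the multiplicative unit $\pair{\1}{\0}$ of $\algSmodMzero$ and is therefore a unital conucleus.
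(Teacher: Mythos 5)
Your proposal is correct and follows essentially the same route as the paper: the product computation via biaction associativity plus cyclicity (collapsing both joinands to $(a \cdot b) \ast \0$) and the join computation via componentwise joins together with residuation in the scalar argument are exactly the paper's two displayed calculations. The remaining clauses, which the paper dismisses as straightforward, you verify in the evident way, and those verifications are all sound.
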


\begin{proof}
  The map $\sigma$ preserves binary products:
\begin{align*}
  \sigma \pair{a}{x} \circ \sigma \pair{b}{y} & = \pair{a}{a \ast \0} \circ \pair{b}{b \ast \0} \\ & = \pair{a \cdot b}{a \ast (b \ast \0) \vee (a \ast \0) \ast b} \\ & = \pair{a \cdot b}{(a \cdot b) \ast \0} \\ & = \sigma \pair{a \cdot b}{a \ast y \vee x \ast b} \\ & = \sigma \left( \pair{a}{x} \circ \pair{b}{y} \right).
\end{align*}
  The map also preserves all existing joins:
\begin{align*}
  \bigvee_{i \in I} \sigma \pair{a_{i}}{x_{i}} & = \bigvee_{i \in I} \pair{a_{i}}{a_{i} \ast \0} 
   = \pair{\bigvee_{i \in I} a_{i}}{\bigvee_{i \in I} (a_{i} \ast \0)} = \\
  & = \pair{\bigvee_{i \in I} a_{i}}{(\bigvee_{i \in I} a_{i}) \ast \0} 
   = \sigma \pair{\bigvee_{i \in I} a_{i}}{\bigvee_{i \in I} x_{i}} 
   = \sigma \bigvee_{i \in I} \pair{a_{i}}{x_{i}}.
\end{align*}
  The other claims are straightforward to observe.
\end{proof}

\begin{proposition}[Embedding $\algS$ into $\algSmodM$] \label{prop: bimodule embedding}
  The map $\embedS\colon \algS \to (\algSmodM)_{\sigma}$ defined as
\begin{align*}
  \embedS\colon a \mapsto \pair{a}{a \ast \0} = \pair{a}{\0 \ast a}
\end{align*}
  is an isomorphism. In particular, as a map $\embedS\colon \algS \to \algSmodM$ (or equivalently, as a map $\embedS\colon \algS \to \algSmodMzero$) it preserves products and all existing joins.
\end{proposition}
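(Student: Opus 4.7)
The plan is to prove three things about $\embedS$: its image equals $(\algSmodM)_\sigma$ and is contained in $\algSmodMzero$; it is an order-isomorphism onto this image; and as a map into $\algSmodM$ (equivalently $\algSmodMzero$) it preserves binary products and all existing joins. Cyclicity of $\0$ yields $a \ast \0 = \0 \ast a$, so $\embedS$ is well-defined. Comparing with the formula for $\sigma$ in the preceding proposition, the pairs of the form $\pair{a}{a \ast \0}$ are exactly the $\sigma$-fixed points, so $\embedS[\algS] = (\algSmodM)_\sigma$; containment in $\algSmodMzero$ is immediate, since the defining inequalities $\0 \ast a \leq x$ and $a \ast \0 \leq x$ hold with equality when $x = a \ast \0$. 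The isomorphism claim is then straightforward: injectivity is read off the first component, the implication $a \leq b \Rightarrow \embedS(a) \leq \embedS(b)$ follows from isotonicity of the biaction, and the converse implication again inspects first components.

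For product preservation I would compute
\begin{align*}
\embedS(a) \circ \embedS(b) &= \pair{a}{a \ast \0} \circ \pair{b}{b \ast \0} \\
&= \pair{a \cdot b}{(a \ast \0) \ast b \vee a \ast (b \ast \0)} = \pair{a \cdot b}{(a \cdot b) \ast \0} = \embedS(a \cdot b),
\end{align*}
where both joinands in the second component equal $(a \cdot b) \ast \0$ by associativity of the biaction, so the join collapses. For existing joins, residuatedness of the biaction makes $a \mapsto a \ast \0$ a left adjoint in its first argument (right adjoint $y \mapsto y \sresleft \0$), hence join-preserving; thus $(\bigvee_{i \in I} a_i) \ast \0 = \bigvee_{i \in I} (a_i \ast \0)$ whenever the join on the left exists in $\algS$. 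Combining this with the componentwise join formula in $\algSmodM$ (and the preceding Fact for $\algSmodMzero$) yields $\embedS(\bigvee_{i \in I} a_i) = \bigvee_{i \in I} \embedS(a_i)$. The main subtlety, and the only point essentially using residuatedness of the bimodule, is this last join-preservation step.
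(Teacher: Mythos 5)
Your proposal is correct and follows essentially the same route as the paper: the same product computation, surjectivity onto the $\sigma$-fixpoints read off from the definition, and join preservation via residuation of $a \mapsto a \ast \0$ together with componentwise joins (which is exactly how the paper handles it in the surrounding results). One small precision: the collapse of the two joinands is not by associativity of the biaction alone—you need cyclicity of $\0$, as in $(a \ast \0) \ast b = a \ast (\0 \ast b) = a \ast (b \ast \0) = (a \cdot b) \ast \0$—but since you invoke cyclicity at the outset, this is only a matter of wording.
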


\begin{proof}
  The map $\embedS$ is an order embedding which preserves products:
\begin{align*}
  \embedS (a) \circ \embedS (b) & = \pair{a}{a \ast \0} \circ \pair{b}{b \ast \0} \\ & = \pair{a \cdot b}{a \ast (b \ast \0) \vee (a \ast \0) \ast b} \\ & = \pair{a \cdot b}{a \ast (b \ast \0) \vee a \ast (\0 \ast b)} \\ & = \pair{a \cdot b}{a \ast (b \ast \0)} \\ & = \pair{a \cdot b}{(a \cdot b) \ast 0} \\ & = \embedS (a \cdot b).
\end{align*}
  It is clearly surjective onto $(\algSmodM)_{\sigma}$, hence it is an isomorphism.
\end{proof}

\begin{proposition}[Recovering $\algM$ inside $\algSmodM$]
  The map $\gamma\colon \algSmodM \to \algSmodM$ defined as
\begin{align*}
  \gamma\colon \pair{a}{x} \mapsto \pair{\0 \bsresright x}{x} = \embedM(x) = \pair{x \sresleft \0}{x}
\end{align*}
  is an idempotent isotone map which preserves all existing meets. Its restriction to $\algSmodMzero$ is a closure operator which preserves all existing componentwise meets in $\algSmodMzero$. Moreover, $(\algSmodMzero)_{\gamma} = (\algSmodM)_{\gamma}$.
\end{proposition}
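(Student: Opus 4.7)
The plan is to verify each clause of the proposition separately, with cyclicity of $\0$ and residuation of the biaction doing most of the work.

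First I would check that $\gamma$ is well-defined, i.e.\ that $\0 \bsresright x = x \sresleft \0$. Both are determined by residuation:
\begin{align*}
  a \leq \0 \bsresright x &\iff \0 \ast a \leq x, \\
  a \leq x \sresleft \0 &\iff a \ast \0 \leq x,
\end{align*}
and these characterizations agree because $\0$ is cyclic. Idempotence of $\gamma$ is then immediate from the definition (the second coordinate of $\gamma\pair{a}{x}$ is $x$), and isotonicity of $\gamma$ reduces to isotonicity of $\0 \bsresright -$ in its right argument, which is automatic for a residual.

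For preservation of meets in $\algSmodM$, I would invoke the fact recorded before the proposition that meets in $\algSmodM$ are computed componentwise whenever they exist. Once $\bigwedge_{i \in I} \pair{a_i}{x_i}$ exists, both $\bigwedge a_i$ and $\bigwedge x_i$ exist, and then $\0 \bsresright \bigwedge x_i = \bigwedge(\0 \bsresright x_i)$ since the right adjoint $\0 \bsresright -$ preserves arbitrary existing meets. Reassembling componentwise gives $\gamma \bigwedge \pair{a_i}{x_i} = \bigwedge \gamma\pair{a_i}{x_i}$.

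To show that the restriction of $\gamma$ to $\algSmodMzero$ is a closure operator, two facts remain. Firstly, $\gamma$ must send $\algSmodMzero$ into itself: writing $\gamma\pair{a}{x} = \pair{\0 \bsresright x}{x}$, the membership condition demands $\0 \ast (\0 \bsresright x) \leq x$ and $(\0 \bsresright x) \ast \0 \leq x$; the former is the counit inequality of the adjunction, and the latter follows from it by cyclicity. Secondly, $\pair{a}{x} \leq \gamma\pair{a}{x}$ reduces to $a \leq \0 \bsresright x$, i.e.\ to $\0 \ast a \leq x$, which is half of the defining condition of $\algSmodMzero$. Preservation of componentwise meets in $\algSmodMzero$ then follows from the analogous statement in $\algSmodM$ together with the observation that each $\gamma\pair{a_i}{x_i}$ already lies in $\algSmodMzero$, so the componentwise meet sits in the subposemigroup and serves as the meet there as well.

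Finally, for $(\algSmodMzero)_{\gamma} = (\algSmodM)_{\gamma}$, any fixed point $\pair{a}{x} \in (\algSmodM)_{\gamma}$ has $a = \0 \bsresright x$; the computation used above then shows $\0 \ast a \leq x$ and $a \ast \0 \leq x$, placing $\pair{a}{x}$ in $\algSmodMzero$, and the reverse inclusion is trivial. No step is genuinely hard, but the clause that most rewards care is the componentwise-meet claim for $\algSmodMzero$: one has to be explicit about which ambient poset the meet is being computed in and confirm that the two readings agree, and this agreement holds precisely because $\gamma$ takes values in $\algSmodMzero$.
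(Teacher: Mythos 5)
Your proof is correct and follows essentially the same route as the paper: componentwise computation of meets using the fact that $\0 \bsresright {-}$ preserves existing meets, and the observation that $\pair{a}{x} \in \algSmodMzero$ forces $a \leq \0 \bsresright x$, which yields the closure-operator property on $\algSmodMzero$. You are in fact slightly more explicit than the paper in checking well-definedness via cyclicity, stability of $\algSmodMzero$ under $\gamma$, and the clause $(\algSmodMzero)_{\gamma} = (\algSmodM)_{\gamma}$, which the paper leaves implicit.
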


\begin{proof}
  The map also preserves all existing meets in $\algSmodM$ and therefore all componentwise meets in $\algSmodMzero$:
\begin{align*}
  \bigwedge_{i \in I} \gamma \pair{a_{i}}{x_{i}} & = \bigwedge_{i \in I} \pair{\0 \bsresright x_{i}}{x_{i}} 
   = \pair{\bigwedge_{i \in I} \0 \bsresright x_{i}}{\bigwedge_{i \in I} x_{i}} = \\
   & = \pair{\0 \bsresright \bigwedge_{i \in I} x_{i}}{\bigwedge_{i \in I} x_{i}} 
   = \gamma \pair{\bigwedge_{i \in I} a_{i}}{\bigwedge_{i \in I} x_{i}} 
   = \gamma \bigwedge_{i \in I} \pair{a_{i}}{x_{i}}. \qedhere
\end{align*}
  For $\pair{a}{x} \in \algSmodMzero$ we have $\pair{a}{x} \leq \embedM(y) = \pair{\0 \bsresright y}{y}$ if and only if $x \leq y$ and $a \leq \0 \bsresright y$. But $x \leq y$ implies $a \leq \0 \bsresright y$ ($\0 \ast a \leq x$ because $\pair{a}{x} \in \algSmodMzero$), therefore $\pair{a}{x} \leq \embedM(y)$ holds if and only if $x \leq y$. It follows that $\embedM(x)$ is the smallest pair of this form above $\pair{a}{x}$, therefore $\gamma$ is an interior operator on $\algSmodMzero$.
\end{proof}

\begin{proposition}[Embedding $\algM$ into $\algSmodM$]
  The map $\embedM\colon \algM \to (\algSmodM)_{\gamma}$ defined as
\begin{align*}
  \embedM\colon x \mapsto \pair{\0 \bsresright x}{x} = \pair{x \sresleft \0}{x}
\end{align*}
  is an isomorphism of posets. In particular, as a map $\embedM\colon \algM \to \algSmodMzero$ it preserves all existing meets in $\algSmodMzero$.
\end{proposition}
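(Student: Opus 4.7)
My plan is to prove the isomorphism directly and then deduce meet preservation from the previous proposition. First I would verify that the two displayed formulas for $\embedM(x)$ agree: by the residuation laws, $a \leq \0 \bsresright x$ iff $\0 \ast a \leq x$ iff $a \ast \0 \leq x$ (here cyclicity is used) iff $a \leq x \sresleft \0$, so $\0 \bsresright x = x \sresleft \0$. The map $\embedM$ lands in $(\algSmodM)_{\gamma}$, since $\gamma$ acts as $\pair{a}{x} \mapsto \pair{\0 \bsresright x}{x}$, which immediately gives $\gamma \embedM(x) = \embedM(x)$.

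Next I would argue that $\embedM$ is an order embedding. Under the componentwise order on $\algSmodM$, $\embedM(x) \leq \embedM(y)$ holds iff $\0 \bsresright x \leq \0 \bsresright y$ and $x \leq y$; the first inequality follows from the second by the isotonicity of $\0 \bsresright (-)$. Hence $\embedM(x) \leq \embedM(y)$ iff $x \leq y$. Surjectivity onto $(\algSmodM)_{\gamma}$ is immediate from the formula for $\gamma$: every fixpoint of $\gamma$ is of the form $\gamma \pair{a}{x} = \pair{\0 \bsresright x}{x} = \embedM(x)$. Thus $\embedM$ is a poset isomorphism $\algM \to (\algSmodM)_{\gamma}$.

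For the meet-preservation clause, I would invoke the previous proposition, which established that $\gamma$ restricted to $\algSmodMzero$ is a closure operator, that $(\algSmodMzero)_{\gamma} = (\algSmodM)_{\gamma}$, and that $\gamma$ preserves all existing componentwise meets in $\algSmodMzero$. A standard fact about closure operators then ensures that any meet existing in $\algSmodMzero$ among $\gamma$-fixed elements is itself $\gamma$-fixed, hence coincides with the meet in $(\algSmodM)_{\gamma}$. Transporting this across the order isomorphism $\embedM$ gives precisely the claim that $\embedM$ sends meets in $\algM$ to the corresponding meets of their images in $\algSmodMzero$, whenever the relevant meet exists in $\algSmodMzero$.

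I do not expect a genuine obstacle: the substantive work has already been carried out in proving that $\gamma$ is a closure operator on $\algSmodMzero$ with the stated meet-preservation properties, so this proposition amounts to repackaging those facts along the identifying map $\embedM$.
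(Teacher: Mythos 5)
Your proof is correct and follows essentially the same approach as the paper, just spelling out the details that the paper's terse proof ("the map is a surjective order embedding by definition, and the second claim holds because $\gamma$ is a closure operator on $\algSmodMzero$") leaves implicit. Both proofs reduce the meet-preservation clause to the previously established fact that $\gamma$ is a closure operator on $\algSmodMzero$.
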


\begin{proof}
  The map is a surjective order embedding by definition, and the second claim holds because $\gamma$ is a closure operator on $\algSmodMzero$.
\end{proof}

  The map $\sigma$ thus assigns to each $m \in \algSmodMzero$ the largest element of the form $\embedS a$ below $m$, and $\gamma$ assigns to it the smallest element of the form $\embedM x$ above $m$.

\begin{proposition}[Residuals in $\algSmodM$]
  Residuals of the form $m \bs \gamma n$ and $\gamma n / m$ for $m, n \in \algSmodM$ exist in $\algSmodM$:
\begin{align*}
  \pair{a}{x} \bs \embedM y & = \pair{(\0 \ast a \vee x) \bsresright y}{a \bsresleft y}, & \embedM y / \pair{a}{x} = \pair{y \sresleft (x \vee a \ast \0)}{y \sresright a}.
\end{align*}
  Consequently, residuals of the form $m \bs \gamma n$ and $\gamma n / m$ for $m, n \in \algSmodMzero$ exist in $\algSmodMzero$. In $\algSmodMzero$ they simplify to:
\begin{align*}
  \pair{a}{x} \bs \embedM y & = \pair{x \bsresright y}{a \bsresleft y}, & \embedM y / \pair{a}{x} = \pair{y \sresleft x}{y \sresright a}.
\end{align*}
\end{proposition}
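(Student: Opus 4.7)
My plan is to verify the residuation biconditional in $\algSmodM$ by a direct computation, and then pass to $\algSmodMzero$ by simplifying the formula and checking that the candidate element still lies in $\algSmodMzero$. For the first step I would unpack the inequality $\pair{a}{x} \circ \pair{c}{z} \leq \embedM y = \pair{\0 \bsresright y}{y}$ into its two component inequalities $a \cdot c \leq \0 \bsresright y$ and $x \ast c \vee a \ast z \leq y$. The first rewrites by biaction residuation as $\0 \ast (a \cdot c) \leq y$, and by associativity of the right action as $(\0 \ast a) \ast c \leq y$. Combined with $x \ast c \leq y$ via distributivity of the right action over binary joins in $\algM$, this merges into $(\0 \ast a \vee x) \ast c \leq y$, that is, $c \leq (\0 \ast a \vee x) \bsresright y$. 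The remaining inequality $a \ast z \leq y$ is precisely $z \leq a \bsresleft y$, so together I recover the claimed formula for $\pair{a}{x} \bs \embedM y$.

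For the passage to $\algSmodMzero$, the defining condition $\pair{a}{x} \in \algSmodMzero$ gives $\0 \ast a \leq x$, hence $\0 \ast a \vee x = x$ and the formula simplifies to $\pair{x \bsresright y}{a \bsresleft y}$ as stated. What remains is to verify that this pair actually lies in $\algSmodMzero$, that is, both $\0 \ast (x \bsresright y) \leq a \bsresleft y$ and $(x \bsresright y) \ast \0 \leq a \bsresleft y$. The former unpacks via residuation to $(a \ast \0) \ast (x \bsresright y) \leq y$, which follows from $a \ast \0 \leq x$ together with the counit-style inequality $x \ast (x \bsresright y) \leq y$. The latter unpacks to $(a \cdot (x \bsresright y)) \ast \0 \leq y$, and here I expect cyclicity of $\0$ to be essential: it permits the rewrite $(a \cdot (x \bsresright y)) \ast \0 = \0 \ast (a \cdot (x \bsresright y)) = (\0 \ast a) \ast (x \bsresright y) \leq x \ast (x \bsresright y) \leq y$. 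Once membership is confirmed, the residual inherits automatically from $\algSmodM$ to its subposemigroup $\algSmodMzero$.

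The case of $\gamma n / m$ is handled by the mirror-image computation, yielding $\pair{y \sresleft (x \vee a \ast \0)}{y \sresright a}$ in $\algSmodM$ and $\pair{y \sresleft x}{y \sresright a}$ in $\algSmodMzero$, with cyclicity again playing the decisive role in the dual membership check. The only genuinely non-mechanical ingredient anywhere in the proof is this invocation of cyclicity at the membership verification step; everything else is routine manipulation using the residuation laws of the biaction together with the distributivity of the left and right actions over joins.
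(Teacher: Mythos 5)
Your proposal is correct and follows essentially the same route as the paper: unpack $\pair{a}{x}\circ\pair{c}{z}\leq\embedM y$ componentwise, rewrite $a\cdot c\leq \0\bsresright y$ via the action residuals and the compatibility law, and merge the two right-action inequalities using distributivity over joins to obtain the stated residual, with the mirror computation for $\gamma n / m$. Your additional explicit check that $\pair{x \bsresright y}{a \bsresleft y}$ lies in $\algSmodMzero$ (using $a\ast\0\leq x$ and cyclicity of $\0$) is also correct; the paper leaves this membership step implicit, so including it only makes the argument more complete.
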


\begin{proof}
  Consider $\pair{a}{x}, \pair{b}{y} \in \algSmodM$ and $z \in \algM$:
\begin{align*}
  \pair{a}{x} \circ \pair{b}{y} \leq \embedM z & \iff \pair{a \cdot b}{x \ast b \vee a \ast y} \leq \pair{\0 \bsresright z}{z} \\
  & \iff a \cdot b \leq \0 \bsresright z \text{ and } x \ast b \leq z \text{ and } a \ast y \leq z \\
  & \iff \0 \ast (a \cdot b) \leq z \text{ and } x \ast b \leq z \text{ and } a \ast y \leq z \\
  & \iff (\0 \ast a) \ast b \leq z \text{ and } x \ast b \leq z \text{ and } a \ast y \leq z \\
  & \iff (\0 \ast a \vee x) \ast b \leq z \text{ and } a \ast y \leq z \\
  & \iff b \leq (\0 \ast a \vee x) \bsresright z \text{ and } y \leq a \bsresleft z,
\end{align*}
  where the penultimate equivalence holds because $x \ast b \leq z$ implies that $\0 \ast (a \cdot b) \leq (\0 \ast a) \ast b \leq x \ast b \leq z$.
\end{proof}

\begin{proposition}[Recovering the bimodule actions] \label{prop: recovering actions}
  For all $x, y \in \algS$ and $a \in \algM$ we have
\begin{align*}
  \embedM (x \ast a) & = \gamma(\embedM x \cdot \embedS a), & \embedS (x \bsresright y) & = \sigma(\embedM x \bs \embedM y), & \embedM (a \bsresleft x) & = \gamma (\embedS a \bs \embedM x), \\
  \embedM (a \ast x) & = \gamma(\embedS a \cdot \embedM x), & \embedS (x \sresleft y) & = \sigma(\embedM x / \embedM y), & \embedM (x \sresright a) & = \gamma (\embedM x / \embedS a).
\end{align*}
\end{proposition}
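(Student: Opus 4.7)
My plan is to prove all six identities by direct computation, exploiting the fact that both sides of each identity are pairs where one coordinate determines the other. Recall that every element in the image of $\embedS$ (equivalently, of $\sigma$) has the form $\pair{a}{a \ast \0}$, so it is determined by its first coordinate, and every element in the image of $\embedM$ (equivalently, of $\gamma$ restricted to $\algSmodMzero$) has the form $\pair{\0 \bsresright x}{x}$, so it is determined by its second coordinate. Consequently, to establish an identity of the form $\embedM(\cdot) = \gamma(\cdot)$ it suffices to match second coordinates, and for $\embedS(\cdot) = \sigma(\cdot)$ it suffices to match first coordinates.

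For the two action identities, I would unfold $\embedS a \circ \embedM x = \pair{a}{\0 \ast a} \circ \pair{\0 \bsresright x}{x} = \pair{a \cdot (\0 \bsresright x)}{(\0 \ast a) \ast (\0 \bsresright x) \vee a \ast x}$ and then observe that the first joinand in the second coordinate is redundant: by associativity of the biaction and residuation, $(\0 \ast a) \ast (\0 \bsresright x) = a \ast (\0 \ast (\0 \bsresright x)) \leq a \ast x$. Thus the second coordinate reduces to $a \ast x$, which is precisely the second coordinate of $\embedM(a \ast x)$. The dual computation handles $\embedM(x \ast a) = \gamma(\embedM x \cdot \embedS a)$.

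For the four residual identities, I would use the formulas for residuals in $\algSmodM$ from the earlier theorem. For instance, $\embedM x \bs \embedM y = \pair{(\0 \bsresright x) \bs (\0 \bsresright y) \wedge x \bsresright y}{(\0 \bsresright x) \bsresleft y}$, and applying $\sigma$ extracts the first coordinate. The crucial step is the inequality $x \bsresright y \leq (\0 \bsresright x) \bs (\0 \bsresright y)$, which after applying residuation amounts to $(\0 \ast (\0 \bsresright x)) \ast (x \bsresright y) \leq y$; this follows by combining $\0 \ast (\0 \bsresright x) \leq x$ with $x \ast (x \bsresright y) \leq y$ via isotonicity. Hence the meet simplifies to $x \bsresright y$, giving the desired identity. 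The remaining three residual identities are proved by the same pattern: unfold, identify the coordinate being projected onto, and use residuation together with the cyclic action of $\0$ to absorb the extraneous meet or join term.

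The main potential obstacle is keeping track of sides (left/right residuals, left/right actions) and ensuring that associativity of the biaction $(a \ast x) \ast b = a \ast (x \ast b)$ is applied correctly when manipulating expressions such as $(\0 \ast a) \ast (\0 \bsresright x)$. Beyond this bookkeeping, all steps reduce to a single application of residuation and the cancellation $\0 \ast (\0 \bsresright x) \leq x$ (or its dual $(x \sresleft \0) \ast \0 \leq x$), so no essentially new ideas beyond those already used in establishing the formulas for $\sigma$, $\gamma$, $\embedS$, and $\embedM$ are required.
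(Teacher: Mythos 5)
Your proposal is correct and follows essentially the same route as the paper: compute the relevant product or residual in the Nagata product, use that $\sigma$ and $\gamma$ send a pair to $\embedS$ of its first coordinate and $\embedM$ of its second coordinate respectively, and simplify the remaining coordinate via the same cancellations ($\0 \ast (\0 \bsresright x) \leq x$, cyclicity of $\0$, and biaction associativity). Your spelled-out inequality $x \bsresright y \leq (\0 \bsresright x) \bs (\0 \bsresright y)$ is exactly the simplification the paper asserts implicitly, so no new ideas or gaps are involved.
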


\begin{proof}
  We know that $\embedM z = \gamma (\embedM z)$ for all $z \in \algM$. Moreover, $\gamma \pair{b}{y} = \gamma \pair{c}{z}$ if and only if $y = z$, and $\sigma \pair{b}{y} = \sigma \pair{c}{z}$ if and only if $b = c$. The top three equations now hold because
\begin{align*}
  \embedM x \circ \embedS a & = \pair{x \sresleft \0}{x} \circ \pair{a}{\0 \ast a} \\
  & = \pair{(x \sresleft \0) \cdot a}{(x \sresleft \0) \ast \0 \ast a \vee x \ast a} \\
  & = \pair{(x \sresleft \0) \cdot a}{x \ast a}, \\ ~ \\
  \embedM x \bsresright \embedM y & = \pair{(\0 \bsresright x) \bs (\0 \bsresright y) \wedge x \bsresright y}{(\0 \bsresright x) \bsresleft y} \\
  & = \pair{x \bsresright y}{(\0 \bsresright x) \bsresleft y} \\ ~ \\
  \embedS a \bs \embedM x & =  \pair{a \bs (\0 \bsresright x) \wedge (a \ast \0) \bsresright x}{a \bsresleft x}.
\end{align*}
  The proofs of the bottom three equations are analogous.
\end{proof}

  We can therefore fully recover a cyclic pointed residuated $\algS$-bimodule $\algM$ from either the full Nagata product $\algSmodM$ or from the restricted Nagata product $\algSmodMzero$ using the maps $\sigma$ and $\gamma$ and the existence of residuals of the forms $m \bs \gamma n$ and $\gamma n / m$. 

\begin{theorem}[Recovering a bimodule from its Nagata product] \label{thm: counit is iso}
  Let $\algM$ be a cyclic pointed residuated $\algS$-bimodule and let $\algN \assign \algSmodM$ (or $\algN \assign \algSmodMzero$). Then the $\0$-pointed $\algS$-bimodule $\algM$ is isomorphic via the map $\pair{\embedS}{\embedM}$ to the $\embedM(\0)$-pointed $\algNsigma$-bimodule $\algNgamma$ with the action and the residuals of the action
\begin{align*}
  a \ast x & = \gamma(a \cdot x), & x \bsresright y & = \sigma(x \bs y), & a \bsresleft x & = \gamma(a \bs x), \\
  x \ast a & = \gamma(x \cdot a), & y \sresleft x & = \sigma(y / x), & x \sresright a & = \gamma(x / a).
\end{align*}
\end{theorem}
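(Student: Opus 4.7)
The plan is to assemble the preceding propositions rather than perform new calculations. The pair $\pair{\embedS}{\embedM}$ has already been shown to yield poset isomorphisms $\algS \to \algNsigma$ and $\algM \to \algNgamma$, and it maps $\0$ to $\embedM(\0)$ by definition, so the entire content left to verify is that these two componentwise isomorphisms glue into a bimodule isomorphism under the biaction and residuals declared in the statement.

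First I would check that the proposed operations are well-defined on $\algNsigma$ and $\algNgamma$: the elements $\gamma(a \cdot x)$ and $\gamma(a \bs x)$ lie in $\algNgamma$ and $\sigma(x \bs y)$, $\sigma(y / x)$ lie in $\algNsigma$ by idempotence of the two maps. For the residuals to be defined, one needs that the ordinary residuals $x \bs y$ and $y / x$ in $\algN$ exist whenever $x, y \in \algNgamma$; this is supplied by the preceding proposition on residuals of the form $m \bs \gamma n$ and $\gamma n / m$, which covers both $\algN = \algSmodM$ and $\algN = \algSmodMzero$.

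The heart of the argument is then Proposition~\ref{prop: recovering actions}, which states precisely the six intertwining identities $\embedM(a \ast x) = \gamma(\embedS a \cdot \embedM x)$, $\embedS(x \bsresright y) = \sigma(\embedM x \bs \embedM y)$, and so on for the remaining two action and four residual equations. Because $\embedS$ and $\embedM$ are order isomorphisms onto $\algNsigma$ and $\algNgamma$, these identities are exactly the statement that $\pair{\embedS}{\embedM}$ transports every structural operation of the original bimodule to the operation declared in the target. The fact that the target satisfies the bimodule axioms then comes for free, inherited from the source via the isomorphism.

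The one point that merits explicit attention is the uniformity of the argument over the two cases $\algN = \algSmodM$ and $\algN = \algSmodMzero$. I would handle this by invoking the earlier observations that $(\algSmodMzero)_{\sigma} = (\algSmodM)_{\sigma}$ and $(\algSmodMzero)_{\gamma} = (\algSmodM)_{\gamma}$, and that the residuals of the form $m \bs \gamma n$ and $\gamma n / m$ computed in $\algSmodMzero$ are the restrictions of those in $\algSmodM$. Because $\embedS$, $\embedM$, $\sigma$, and $\gamma$ take values inside $\algSmodMzero$, all the calculations are identical on the nose, so one proof genuinely covers both cases; this parsimony is the only mildly nontrivial bookkeeping required.
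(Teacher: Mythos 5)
Your proposal is correct and follows exactly the route the paper takes: the theorem is presented there as the summary of the preceding propositions (the isomorphisms $\embedS$, $\embedM$ onto $\algNsigma$, $\algNgamma$, the existence of residuals of the forms $m \bs \gamma n$ and $\gamma n / m$, and Proposition~\ref{prop: recovering actions} supplying the six intertwining identities), with the two cases $\algSmodM$ and $\algSmodMzero$ handled by the observations $(\algSmodMzero)_{\sigma} = (\algSmodM)_{\sigma}$ and $(\algSmodMzero)_{\gamma} = (\algSmodM)_{\gamma}$. No gap; your bookkeeping about well-definedness and the transfer of the bimodule axioms along the isomorphism is exactly what the paper leaves implicit.
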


\section{The structural bimodule}
\label{sec: structural bimodule}

  In the previous section, we constructed two single-sorted algebras $\algSmodM$ and $\algSmodMzero$ out a cyclic pointed residuated $\algS$-bimodule~$\algM$. In this section, we consider the converse problem of constructing a bimodule given a posemigroup $\algN$ and two suitable idempotent isotone maps $\sigma$ and $\gamma$ on $\algN$. We then put the two constructions together and prove that they form an adjunction.

\begin{definition}[Conuclei and pre-conuclei]
  A \emph{(unital) pre-conucleus} on a posemigroup (pomonoid) $\algN$ is an idempotent isotone map $\sigma$ on $\algN$ such that $\algNsigma \assign \sigma[\algN]$ is a subsemigroup (submonoid) of $\algN$. A \emph{conucleus} is a pre-conucleus $\sigma$ such that $\sigma(m) \leq m$ for all $m \in \algN$.
\end{definition}

\begin{definition}[$\sigma$-structural $\sigma$-closure operators]
  Given a pre-conucleus $\sigma$ on a posemigroup $\algN$, a \emph{$\sigma$-closure operator} is an idempotent isotone map $\gamma$ on $\algN$ such that for all $a \in \algNsigma$ and $x \in \algNgamma \assign \gamma[\algN]$
\begin{align*}
  a \cdot x & \leq \gamma(a \cdot x), \\
  x \cdot a & \leq \gamma(x \cdot a).
\end{align*}
  A $\sigma$-closure operator is \emph{$\sigma$-structural} if for all $a \in \algNsigma$ and $m \in \algN$
\begin{align*}
  a \cdot \gamma m & \leq \gamma(a \cdot m), \\
  \gamma m \cdot a & \leq \gamma(m \cdot a).
\end{align*}
\end{definition}

  Structurality may be defined in several equivalent ways.

\begin{fact} \label{fact: gamma}
  Let $\sigma$ be a pre-conucleus on a posemigroup $\algN$ and let $\gamma$ be a $\sigma$-closure operator. Then the following conditions are equivalent:
\begin{enumerate}[(i)]
\item $a \cdot \gamma m \leq \gamma (a \cdot m)$ for each $a \in \algNsigma$ and $m \in \algN$.
\item $\sigma m \cdot \gamma n \leq \gamma (\sigma m \cdot n)$ for each $m, n \in \algN$.
\item $\gamma (m \bs n) \leq \sigma m \bs \gamma n$ for each $m, n \in \algN$.
\item $\gamma (a \bs m) \leq a \bs \gamma m$ for each $x \in \algN$ and $a \in \algNsigma$.
\end{enumerate}
  In case $\gamma$ is a closure operator, these conditions are equivalent to:
\begin{enumerate}[(i)]
\setcounter{enumi}{4}
\item if $a \in \algNsigma$ and $x \in \algNgamma$, then $a \bs x \in \alg{N}_{\gamma}$.
\end{enumerate}
\end{fact}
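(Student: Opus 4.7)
The plan is to establish the equivalences by exploiting residuation to convert each multiplicative inequality into its adjoint divisional form, together with the identities $\sigma a = a$ for $a \in \algNsigma$ and (in the closure operator case) $\gamma x = x$ for $x \in \algNgamma$. All of (i)--(iv) express the same compatibility of $\gamma$ with multiplication by a $\sigma$-open element, phrased either on the dot side or on the slash side; the equivalence with (v) is then a matter of unpacking what membership in $\algNgamma$ means for a closure operator.

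The implication (i) $\Leftrightarrow$ (ii) is immediate from $\algNsigma = \sigma[\algN]$ and $\sigma\sigma = \sigma$: (ii) is (i) applied to $a := \sigma m$, while (ii) specialised to $m := a \in \algNsigma$ (where $\sigma m = a$) recovers (i). For (i) $\Leftrightarrow$ (iv) I would use residuation in both directions. To obtain (iv) from (i), rewrite the goal $\gamma(a \bs m) \leq a \bs \gamma m$ as $a \cdot \gamma(a \bs m) \leq \gamma m$; applying (i) to the left-hand side gives $a \cdot \gamma(a \bs m) \leq \gamma(a \cdot (a \bs m)) \leq \gamma m$, where the final step uses the cancellation $a \cdot (a \bs m) \leq m$ together with isotonicity of $\gamma$. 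The converse is its mirror image: rewrite $a \cdot \gamma m \leq \gamma(a \cdot m)$ as $\gamma m \leq a \bs \gamma(a \cdot m)$, apply (iv) to $a \bs (a \cdot m)$, and finish using $m \leq a \bs (a \cdot m)$ and isotonicity of $\gamma$.

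For (iii) $\Leftrightarrow$ (iv), the implication (iii) $\Rightarrow$ (iv) is obtained by specialising (iii) to $m := a \in \algNsigma$, since $\sigma a = a$. The reverse direction is obtained by applying (iv) to $\sigma m \in \algNsigma$, which yields $\gamma(\sigma m \bs n) \leq \sigma m \bs \gamma n$, and then relating $m \bs n$ to $\sigma m \bs n$ via the interaction of $\sigma$ with residuation. When $\gamma$ is a genuine closure operator, the final equivalence (iv) $\Leftrightarrow$ (v) is short: from (iv), for $a \in \algNsigma$ and $x \in \algNgamma$ the chain $\gamma(a \bs x) \leq a \bs \gamma x = a \bs x \leq \gamma(a \bs x)$ forces $a \bs x \in \algNgamma$; conversely, given (v), the element $a \bs \gamma m$ lies in $\algNgamma$, so $\gamma(a \bs m) \leq \gamma(a \bs \gamma m) = a \bs \gamma m$ follows from $m \leq \gamma m$ and isotonicity.

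The one step that requires some care is the passage (iv) $\Rightarrow$ (iii), where I have to descend from an inequality at $\sigma m \bs n$ back to one at $m \bs n$; I expect this to hinge on the interaction of $\sigma$ with residuation (and, in the cases of interest, on $\sigma$ being co-extensive, so that $m \bs n \leq \sigma m \bs n$ and $\gamma$-monotonicity closes the argument). Every remaining implication is a routine rearrangement via residuation and the idempotence of $\sigma$ and $\gamma$.
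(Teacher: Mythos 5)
Everything you actually carry out is correct and runs along the same lines as the paper: your (i) $\Leftrightarrow$ (ii) and (iii) $\Rightarrow$ (iv) specialisations, your (i) $\Leftrightarrow$ (iv) residuation chains (your (iv) $\Rightarrow$ (i) computation is, up to notation, the paper's proof of (iii) $\Rightarrow$ (i)), and your two-line treatment of (v) in the closure-operator case all match the paper's argument. The one step you leave open, (iv) $\Rightarrow$ (iii), is however a genuine gap as far as the stated hypotheses go: your proposed route uses $m \bs n \leq \sigma m \bs n$, i.e.\ $\sigma m \cdot (m \bs n) \leq n$, which requires $\sigma m \leq m$; but the Fact only assumes $\sigma$ to be a \emph{pre}-conucleus, and co-extensivity is exactly what distinguishes conuclei from pre-conuclei in the paper's definition, so you cannot appeal to it here.

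You should know, though, that you have put your finger on a soft spot of the statement itself rather than merely of your own argument. The paper dismisses the corresponding direction ((i) $\Rightarrow$ (iii)) as immediate, but the obvious derivation --- from $\sigma m \cdot \gamma(m \bs n) \leq \gamma(\sigma m \cdot (m \bs n))$ one still needs $\sigma m \cdot (m \bs n) \leq n$ --- silently uses the same inequality $\sigma m \leq m$. For a pre-conucleus which is not a conucleus the implication can genuinely fail: take $\algN$ to be a Brouwerian algebra viewed as a posemigroup under meet, let $\sigma$ be the constant map onto the top element (idempotent, isotone, with image a subsemigroup) and $\gamma$ the identity; then (i), (ii), (iv), (v) hold trivially, while (iii) fails, since $\gamma(m \bs m) = \top \not\leq \sigma m \bs \gamma m = m$ whenever $m < \top$. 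So the equivalence with (iii) really does need $\sigma$ to be a conucleus (as it is on restricted Nagata products, where the Fact gets applied), and your diagnosis that the step ``hinges on $\sigma$ being co-extensive'' is exactly right --- but under the hypotheses as literally stated neither your argument nor the paper's closes that direction, and your proof should either add that hypothesis or note that (iii) is only claimed in the conucleus case.
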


\begin{proof}
  The equivalences (i) $\Leftrightarrow$ (ii) and (iii) $\Leftrightarrow$ (iv) and the implications (i) $\Rightarrow$ (iii) and (ii) $\Rightarrow$ (iv) and (iv) $\Rightarrow$ (v) are immediate. To prove (iii) $\Rightarrow$ (i):
\begin{align*}
  \sigma a \cdot \gamma x \leq {\sigma a \cdot \gamma (a \bs ax)} \leq \sigma a \cdot \sigma a \bs \gamma (ax) \leq \gamma (ax).
\end{align*}
  Finally, to prove (v) $\Rightarrow$ (iv): $\gamma(a \bs x) \leq \gamma (a \bs \gamma x) \leq a \bs \gamma x$ if $a \in \algNsigma$.
\end{proof}

  As we saw in the previous section, while a (restricted) Nagata product need not be residuated, residuals of the forms $m \bs \gamma n$ and $\gamma n / m$ are guaranteed to exist. Similarly, while the (restricted) Nagata product need not be a join semilattice, the image of $\gamma$ is a subposet which is a join semilattice.\footnote{Restricting to the case where all residuals exist in the restricted Nagata product would force us to restrict to bimodules where $\alg{S}$ is a meet semilattice, since this meet operation is used in the definition of $\pair{a}{x} \bs \pair{b}{y}$ and $\pair{b}{y} / \pair{a}{x}$. This would in turn force us to further restrict to bimodules where $\alg{M}$ is a lattice, in order to ensure that $\algSmodMzero$ is a meet semilattice from which the meet semilattice structure of $\alg{S}$ can be recovered. Our motivation here is not so much to assume the existence of the least amount of residuals possible, but rather to avoid assuming all of this additional semilattice and lattice structure.}

\begin{definition}[$\gamma$-residuated posemigroups and $\gamma$-join semilattices]
  Let $\gamma$ be an idempotent isotone map on a posemigroup $\algN$. Then $\algN$ is \emph{$\gamma$-residuated} if for each $m \in \algN$ and $x \in \algNgamma$ the residuals $m \bs x$ and $x / m$ exist. We take a $\gamma$-residuated posemigroup to be an expansion of $\algN$ by the operations
\begin{align*}
  m \bsgamma n & \assign m \bs \gamma n, \\
  n \sgamma m & \assign \gamma n / m.
\end{align*}
  Moreover, $\algN$ is a \emph{$\gamma$-join semilattice} if for all $x, y \in \algNgamma$ their join $x \veegamma y$ in $\algNgamma$ exists. We take a $\gamma$-join semilattice to be an expansion of $\algN$ by the operation
\begin{align*}
  m \sqcup n \assign \gamma m \veegamma \gamma n.
\end{align*}
\end{definition}

  Although $m \bsgamma \gamma n$ is by definition equal to $m \bsgamma n$, and likewise $\gamma m \sqcup \gamma n$ is by definition equal to $m \sqcup n$, we shall prefer to write $m \bsgamma \gamma n$ and $\gamma m \sqcup \gamma n$. That is, we use the symbols $\bsgamma$ and $\sqcup$ merely to  emphasize that not all residuals or joins are required to exist, rather than as a notational abbreviation.

  The reason for introducing these operations is to ensure that the classes of algebras introduced below can be defined by inequalities or quasi-inequalities.

\begin{proposition}[The structural bimodule induced by $\gamma$ and $\sigma$] \label{prop: structural action}
  Let $\sigma$ be a (unital) pre-conucleus on a posemigroup (pomonoid) $\algN$ and $\gamma$ be a $\sigma$-structural $\sigma$-closure operator on $\algN$. Then $\algNsigma$ has a (unital) action on $\algNgamma$:
\begin{align*}
  a \ast x & \assign \gamma(a \cdot x), \\
  x \ast a & \assign \gamma(x \cdot a).
\end{align*}
  If $\algN$ is $\gamma$-residuated, then the action $\ast$ is residuated, the residuals being
\begin{align*}
  x \bsresright y & = \sigma(x \bsgamma y), & a \bsresleft x & = \gamma(a \bsgamma x), \\
  y \sresleft x & = \sigma(y \sgamma x), & x \sresright a & = \gamma(x \sgamma a).
\end{align*}
  If $\alg{N}_{\gamma}$ is moreover a join semilattice, then the above action gives $\alg{N}_{\gamma}$ the structure of a residuated $\alg{N}_{\sigma}$-bimodule. An element $\0 \in \algNgamma$ is cyclic in this bimodule if and only if $\gamma(a \cdot \0) = \gamma(\0 \cdot a)$ for all $a \in \algNsigma$.
\end{proposition}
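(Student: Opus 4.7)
The plan is to verify in order: (i) the semigroup action axioms for $\ast$, (ii) the residuation formulas, (iii) the bimodule distributivity in the presence of joins, and (iv) the cyclicity equivalence. The workhorses of the argument are Fact~\ref{fact: gamma}, the defining $\sigma$-closure inequality $a \cdot x \leq \gamma(a \cdot x)$ (and its right companion) for $a \in \algNsigma$ and $x \in \algNgamma$, and the fixed-point identities $\sigma a = a$ and $\gamma x = x$ on the respective images.

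For associativity, consider $(a \cdot b) \ast x = a \ast (b \ast x)$, which after unfolding becomes $\gamma(a \cdot b \cdot x) = \gamma(a \cdot \gamma(b \cdot x))$. The $\leq$ direction uses $b \cdot x \leq \gamma(b \cdot x)$ together with monotonicity of $\cdot$ and $\gamma$. The $\geq$ direction is precisely Fact~\ref{fact: gamma}(i) applied to $a \in \algNsigma$ and $m = b \cdot x$, followed by $\gamma \gamma = \gamma$. The mixed axiom $(a \ast x) \ast b = a \ast (x \ast b)$ and the right-hand axiom $x \ast (a \cdot b) = (x \ast a) \ast b$ follow by the same template, with the roles of left and right interchanged where needed. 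Isotonicity of $\ast$ is inherited from $\gamma$ and $\cdot$, and in the unital case $\1 \ast x = \gamma(\1 \cdot x) = \gamma(x) = x$, likewise on the right.

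For residuation, first observe that $a \bsgamma y = a \bs \gamma y = a \bs y$ for $y \in \algNgamma$, and that Fact~\ref{fact: gamma}(iv) delivers $\gamma(a \bs y) \leq a \bs y$. Combined with $x = \gamma x$ for $x \in \algNgamma$, this gives the equivalence $x \leq \gamma(a \bs y) \iff x \leq a \bs y$. Next, the equivalence $a \ast x \leq y \iff a \cdot x \leq y$ for $y \in \algNgamma$ uses $a \cdot x \leq \gamma(a \cdot x)$ in one direction and $y = \gamma y$ with monotonicity of $\gamma$ in the other. Chaining these with the $\algN$-residuation $a \cdot x \leq y \iff x \leq a \bs y$ yields $x \leq a \bsresleft y \iff a \ast x \leq y$. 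The companion formula $y \sresleft x = \sigma(y \sgamma x) = \sigma(y/x)$ is handled analogously: monotonicity and idempotence of $\sigma$ identify $\sigma(y/x)$ as the largest element of $\algNsigma$ below $y/x$, and $a \ast x \leq y \iff a \leq y/x$ by the same chain. The two right-hand residuals are entirely symmetric.

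With residuation in hand, the bimodule distributivity laws $a \ast (x \veegamma y) = (a \ast x) \veegamma (a \ast y)$ and its right-hand analog follow formally: for $z \in \algNgamma$, the chain $a \ast (x \veegamma y) \leq z \iff x \veegamma y \leq a \bsresleft z \iff x, y \leq a \bsresleft z \iff (a \ast x) \veegamma (a \ast y) \leq z$ is immediate from the defining adjunctions. Together with the action axioms, this makes $\algNgamma$ a residuated $\algNsigma$-bimodule. The cyclicity claim is a direct unfolding: $a \ast \0 = \gamma(a \cdot \0)$ and $\0 \ast a = \gamma(\0 \cdot a)$, so cyclicity of $\0$ in the bimodule is equivalent to $\gamma(a \cdot \0) = \gamma(\0 \cdot a)$ for all $a \in \algNsigma$. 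The most delicate step is confirming that $\sigma(y/x)$ really is the maximum of $\set{a \in \algNsigma}{a \leq y/x}$; this is where the (pre-)conucleus properties must be combined carefully with the residuation in $\algN$.
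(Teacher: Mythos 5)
Most of your argument runs along the same lines as the paper's: you verify the action axioms exactly as the paper does (the one direction from the $\sigma$-closure inequality $b\cdot x\leq\gamma(b\cdot x)$, the other from structurality plus idempotence of $\gamma$), your treatment of the module-side residuals $a\bsresleft x=\gamma(a\bsgamma x)$ and $x\sresright a=\gamma(x\sgamma a)$ is a mildly streamlined but equivalent version of the paper's computation (Fact~\ref{fact: gamma}(iv) plus $\gamma x=x$ on $\algNgamma$), and the distributivity and cyclicity observations are correct and routine.

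The gap is in the scalar-side residuals $y\sresleft x=\sigma(y\sgamma x)$ and $x\bsresright y=\sigma(x\bsgamma y)$. The forward implication ($a\ast x\leq y$ gives $a=\sigma a\leq\sigma(y\sgamma x)$) is fine, but you justify the converse by asserting that ``monotonicity and idempotence of $\sigma$ identify $\sigma(y/x)$ as the largest element of $\algNsigma$ below $y/x$.'' That is false for a pre-conucleus: the proposition does \emph{not} assume $\sigma m\leq m$ (contractivity is exactly what separates conuclei from pre-conuclei, and the proposition is meant to apply to full Nagata products, where $\sigma$ is not an interior operator), so monotonicity and idempotence alone give no relation between $\sigma(y\sgamma x)$ and $y\sgamma x$. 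What your argument actually needs is the inequality $\sigma(y\sgamma x)\leq y\sgamma x$ (equivalently $\gamma(\sigma(y\sgamma x)\cdot x)\leq\gamma y$), and you never establish it — your closing sentence flags it as the delicate point but supplies no argument, and the reason you do give is incorrect. For comparison, the paper proves this direction via the chain $\gamma(a\cdot x)\leq\gamma(\sigma(y\sgamma x)\cdot x)\leq\gamma((y\sgamma x)\cdot x)\leq\gamma y=y$, whose middle step rests on that same inequality; so you have located the one genuinely nontrivial point of the proof, but as written your proposal does not close it. It does close immediately when $\sigma$ is a conucleus (the restricted case, where $\sigma(y\sgamma x)\leq y\sgamma x$ is automatic); in the general pre-conucleus setting this step needs a separate argument or additional hypotheses, and cannot be waved through by ``monotonicity and idempotence.''
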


\begin{proof}
  In the following, let $a, b \in \algNsigma$ and $x, y \in \algNgamma$. We only prove the claims for the left action. If $\algN$ has a unit $\1$ and the conucleus $\sigma$ is unital, then $\1 \ast x = \gamma (\1 \cdot x) = x$. Moreover,
\begin{align*}
  & ab \ast x = \gamma (ab \cdot x) = \gamma (a \cdot (b \cdot x)) \leq \gamma (a \cdot \gamma (b \cdot x)) = a \ast (b \ast x), \\
  & a \ast (b \ast x) = \gamma (a \cdot \gamma (b \cdot x)) \leq \gamma \gamma (a \cdot b \cdot x) = \gamma (ab \cdot x) = ab \ast x.
\end{align*}
  Likewise,
\begin{align*}
  & a \ast (x \ast b) = \gamma(a \cdot \gamma (x \cdot b)) \leq \gamma (a \cdot x \cdot b) \leq \gamma( \gamma(a \cdot x) \cdot b) = (a \ast x) \ast b, \\
  & (a \ast x) \ast b = \gamma(\gamma (a \cdot x) \cdot b) \leq \gamma (a \cdot x \cdot b) \leq \gamma (a \cdot \gamma (x \cdot b)) = a \ast (x \ast b),
\end{align*}
  therefore $\algNsigma$ acts (unitally) on $\algNgamma$. It remains to find the residuals of this action.

  If $x \leq \gamma(a \bsgamma y)$, then $\gamma(a \cdot x) \leq \gamma(a \cdot \gamma(a \bsgamma y)) \leq \gamma(a \cdot (a \bsgamma y)) \leq \gamma y = y$. Conversely, $a \cdot x \leq \gamma(a \cdot x)$ implies that $x \leq a \bsgamma \gamma(a \cdot x)$ and $x \leq \gamma(a \bsgamma \gamma(a \cdot x))$ by applying $\gamma$ to the previous inequality, so if $\gamma(a \cdot x) \leq y$, then $x \leq \gamma(a \bsgamma \gamma(a \cdot x)) \leq \gamma(a \bsgamma y)$.

  If $a \leq \sigma(y \sgamma x)$, then $\gamma(a \cdot x) \leq \gamma(\sigma(y \sgamma x) \cdot x) \leq \gamma((y \sgamma x) \cdot x) \leq \gamma y = y$. Conversely, $a \cdot x \leq \gamma(a \cdot x)$ implies that $a \leq \gamma(a \cdot x) \sgamma x$ and $a \leq \sigma(\gamma(a \cdot x) \sgamma x)$ by applying $\sigma$ to the previous inequality, so if $\gamma(a \cdot x) \leq y$, then $a \leq \sigma(\gamma(a \cdot x) \sgamma x) \leq \sigma(y \sgamma x)$.
\end{proof}

  The (residuated) bimodule produced by the above construction will be called the \emph{structural bimodule} induced by $\langle \algN, \sigma, \gamma \rangle$.

\begin{proposition}[The $\sigma$-structurality of $\gamma$]
  The map $\gamma$ is a $\sigma$-structural $\sigma$-closure operator on $\algSmodM$ (hence also on $\algSmodMzero$).
\end{proposition}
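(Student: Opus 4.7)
The plan is to recall that idempotence and isotonicity of $\gamma$ were already established earlier, and then observe that the $\sigma$-structurality inequality $\sigma m \circ \gamma n \leq \gamma(\sigma m \circ n)$ also yields the $\sigma$-closure operator condition: specializing to $m \in (\algSmodM)_{\sigma}$ and $n \in (\algSmodM)_{\gamma}$ gives $\sigma m = m$ and $\gamma n = n$, hence $m \circ n \leq \gamma(m \circ n)$. Thus only the two structurality inequalities
\begin{align*}
  \sigma m \circ \gamma n & \leq \gamma(\sigma m \circ n), \\
  \gamma n \circ \sigma m & \leq \gamma(n \circ \sigma m)
\end{align*}
remain to be verified, and by left--right symmetry it suffices to treat the first.

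The verification is a coordinate-wise calculation whose engine is the identity
\begin{align*}
  (a \ast \0) \ast (\0 \bsresright y) = a \ast (\0 \ast (\0 \bsresright y)) \leq a \ast y,
\end{align*}
obtained from associativity of the biaction together with the defining residuation inequality $\0 \ast (\0 \bsresright y) \leq y$. Fixing $m = \pair{a}{x}$ and $n = \pair{b}{y}$ in $\algSmodM$, this identity collapses the second coordinate of $\sigma m \circ \gamma n$ to $a \ast y$, and one reads off
\begin{align*}
  \sigma m \circ \gamma n & = \pair{a \cdot (\0 \bsresright y)}{a \ast y}, \\
  \gamma(\sigma m \circ n) & = \pair{\0 \bsresright z}{z} \quad \text{with } z \assign (a \ast \0) \ast b \vee a \ast y.
\end{align*}

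The desired inequality then splits into two coordinate claims. The second, $a \ast y \leq z$, is immediate. The first, $a \cdot (\0 \bsresright y) \leq \0 \bsresright z$, is equivalent via residuation to $\0 \ast (a \cdot (\0 \bsresright y)) \leq z$, and the left-hand side equals $(\0 \ast a) \ast (\0 \bsresright y) = (a \ast \0) \ast (\0 \bsresright y) \leq a \ast y \leq z$ by cyclicity of $\0$ and the engine inequality above. No conceptual obstacle arises; all that is needed is careful bookkeeping of the biaction axioms and the placement of cyclicity.
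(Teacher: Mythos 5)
Your proof is correct and takes essentially the same route as the paper: the same coordinate computation showing that the second component of $\sigma m \circ \gamma n$ collapses to $a \ast y$ (via $(a \ast \0) \ast (\0 \bsresright y) = a \ast (\0 \ast (\0 \bsresright y)) \leq a \ast y$), with the remaining first-coordinate inequality handled by residuation, cyclicity of $\0$, and the biaction axioms. The only organizational difference is that the paper first verifies the $\sigma$-closure condition and then chains $\sigma m \circ \gamma n \leq \gamma(\sigma m \circ \gamma n) = \embedM(a \ast y) \leq \gamma(\sigma m \circ n)$, whereas you prove structurality directly and recover the closure condition by specializing to fixed points of $\sigma$ and $\gamma$; both arrangements are sound.
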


\begin{proof}
   Consider $a \in \algS$ and $x \in \algM$. Then
\begin{align*}
  \embedS(a) \circ \embedM(x) = \pair{a (\0 \bsresright x)}{a \ast x \vee a \ast 0 \ast \0 \bsresright x} = \pair{a (x \sresleft \0)}{a \ast x},
\end{align*}
  and $\pair{a (x \sresleft \0)}{a \ast x} \leq \pair{(a \ast x) \sresleft \0}{a \ast x} = \gamma (\pair{a (x \sresleft \0)}{a \ast x})$, since $a (x \sresleft \0) \leq (a \ast x) \sresleft \0$. Thus $\gamma$ is a $\sigma$-closure operator. Moreover,
\begin{align*}
  \embedS(a) \circ \gamma \pair{b}{y} \leq \gamma(\embedS(a) \circ \gamma \pair{b}{y}) = \pair{\0 \bsresright (a \ast y)}{a \ast y} = \embedM(a \ast y),
\end{align*}
  and $\embedM(a \ast y) \leq \embedM(a \ast y \vee (a \ast \0) \ast b) = \gamma(\embedS(a) \circ \pair{b}{y})$. Thus $\gamma$ is $\sigma$-structural.
\end{proof}

  Now that we have identified the conditions under which a pair of maps $\sigma$ and $\gamma$ on a posemigroup $\algN$ induces a bimodule, it only remains to identify the conditions under which $\algN$ embeds into the (restricted) Nagata product of this bimodule.

\begin{definition}[Nagata posemigroups] \label{def: nagata structures}
  A \emph{Nagata posemigroup} consists of a posemigroup $\algN$ equipped with a pre-conucleus~$\sigma$, a $\sigma$-structural $\sigma$-closure operator~$\gamma$, and a constant $\0 \in \algNgamma$ such that $\algN$ is a $\gamma$-residuated $\gamma$-join semilattice which satisfies the following equations:
\begin{align*}
  \sigma (x \cdot y) & = \sigma x \cdot \sigma y &
  \gamma (x \cdot y) & = \gamma(\gamma x \cdot \sigma y) \sqcup \gamma(\sigma x \cdot \gamma y)
\end{align*}
\begin{align*}
  \sigma(x \bsgamma \gamma y) & = \sigma(\gamma x \bsgamma \gamma y) & \gamma (x \bsgamma \gamma y) & = \sigma x \bsgamma \gamma y \\
  \sigma (\gamma y \sgamma x) & = \sigma(\gamma y \sgamma \gamma x) & \gamma (\gamma y \sgamma x) & = \gamma y \sgamma \sigma x
\end{align*}
\begin{align*}
  \sigma \gamma x & = \sigma(\0 \bsgamma \gamma x) & \gamma \sigma x & = \gamma(\0 \cdot \sigma x) \\
  \sigma \gamma x & = \sigma(\gamma x \sgamma \0) & \gamma \sigma x & = \gamma(\sigma x \cdot \0)
\end{align*}
  as well as the quasi-inequality
\begin{align*}
  \sigma x \leq \sigma y \text{ and } \gamma x \leq \gamma y \implies x \leq y.
\end{align*}
  A \emph{restricted Nagata posemigroup} is a Nagata posemigroup such that $\sigma$ is an interior operator (hence a conucleus) and $\gamma$ is a closure operator.
\end{definition}

  The inequalities
\begin{align*}
  & \gamma(x \bsgamma \gamma y) \leq \sigma x \bsgamma \gamma y, & & \gamma(\gamma y \sgamma x) \leq \gamma y \sgamma \sigma x,
\end{align*}
  follow from the $\sigma$-structurality of $\gamma$. The equalities
\begin{align*}
  \gamma (x \bsgamma \gamma y) & = \sigma x \bsgamma \gamma y, & \gamma (\gamma y \sgamma x) & = \gamma y \sgamma \sigma x
\end{align*}
  in the above axiomatization may equivalently be replaced by the seemingly weaker equalities
\begin{align*}
  \gamma (x \bsgamma \gamma y) & = \gamma(\sigma x \bsgamma \gamma y), & \gamma (\gamma y \sgamma x) & = \gamma(\gamma y \sgamma \sigma x),
\end{align*}
  since we shall only need this latter pair of equalities to prove Theorem~\ref{thm: nagata structures} (from which the seemingly stronger equalities follow).

  In the case of restricted Nagata posemigroups, the equalities
\begin{align*}
  \sigma(x \bsgamma \gamma y) & = \sigma(\gamma x \bsgamma \gamma y), & & \sigma (\gamma y \sgamma x) = \sigma(\gamma y \sgamma \gamma x)
\end{align*}
  are redundant, since they hold whenever $\sigma$ is an interior operator and $\gamma$ is a closure operator: the left-below-right inequalities follow immediately from $x \leq \gamma x$, and the right-below-left inequalities hold because
\begin{align*}
  \sigma(\gamma x \bsgamma \gamma y) \leq \sigma(x \bsgamma \gamma y) \iff \sigma(\gamma x \bsgamma \gamma y) \leq x \bsgamma \gamma y \iff x \cdot \sigma(\gamma x \bsgamma \gamma y) \leq \gamma y
\end{align*}
  and $x \cdot \sigma(\gamma x \bsgamma \gamma y) \leq \gamma x \cdot (\gamma x \bsgamma y) \leq \gamma y$.

\begin{fact} \label{fact: nagata posemigroups}
  The (restricted) Nagata product of a cyclic pointed residuated bimodule is a (restricted) Nagata posemigroup.
\end{fact}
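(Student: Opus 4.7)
The plan is to verify, for $\algN \assign \algSmodMzero$ (and analogously for $\algSmodM$ in the unrestricted case), each condition in Definition~\ref{def: nagata structures}. The structural prerequisites are already in hand from the preceding section: $\sigma$ is a (unital) conucleus and $\gamma$ is a closure operator (by the propositions recovering $\algS$ and $\algM$ inside $\algSmodM$); $\gamma$ is $\sigma$-structural (by the immediately preceding proposition); the residuals $m \bsgamma \gamma n$ and $\gamma n \sgamma m$ exist by the proposition on residuals in $\algSmodM$; and $\algNgamma$ is a $\gamma$-join semilattice because binary joins in $\algSmodMzero$ exist and are computed componentwise. So only the listed equations and the quasi-inequality remain.

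I would first dispose of the easy items. The quasi-inequality is immediate: if $\sigma \pair{a}{x} \leq \sigma \pair{b}{y}$ and $\gamma \pair{a}{x} \leq \gamma \pair{b}{y}$, then reading off first coordinates yields $a \leq b$ and reading off second coordinates yields $x \leq y$, so $\pair{a}{x} \leq \pair{b}{y}$. The equation $\sigma(x \cdot y) = \sigma x \cdot \sigma y$ is exactly the earlier proposition's assertion that $\sigma$ preserves binary products. The four axioms involving $\bsgamma$ and $\sgamma$ are essentially repackagings of Proposition~\ref{prop: recovering actions}: for example, $\gamma(m \bsgamma \gamma n) = \sigma m \bsgamma \gamma n$ unfolds to $\gamma(\pair{a}{x} \bs \embedM y) = \embedS a \bs \embedM y$, and using the simplified residual formulas available in $\algSmodMzero$ both sides compute to $\embedM(a \bsresleft y)$ (the matching of first coordinates reduces to the identity $(a \ast \0) \bsresright y = \0 \bsresright (a \bsresleft y)$, which follows from cyclicity together with the commutation of left and right actions). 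The other three axioms of this block are symmetric. The four equations involving $\0$ reduce, after identifying $\0$ with $\embedM(\0) = \pair{\0 \bsresright \0}{\0}$ in the Nagata product, to short computations using $\0 \ast (\0 \bsresright \0) \leq \0$ and the cyclicity identity $\0 \bsresright x = x \sresleft \0$.

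The main obstacle I expect is the equation $\gamma(m \cdot n) = \gamma(\gamma m \cdot \sigma n) \sqcup \gamma(\sigma m \cdot \gamma n)$, which mixes multiplication with both $\sigma$ and $\gamma$. Since $\gamma$-images are determined by their second coordinates, and since the $\gamma$-join $\sqcup$ of two $\embedM$-images is $\embedM$ of the join in $\algM$, the plan is to compute the second coordinates of $\gamma m \cdot \sigma n$ and $\sigma m \cdot \gamma n$ and show that their join in $\algM$ equals the second coordinate of $m \cdot n$, namely $x \ast b \vee a \ast y$. Explicitly, the second coordinate of $\gamma m \cdot \sigma n$ is $x \ast b \vee (\0 \bsresright x) \ast (b \ast \0)$, and the extra summand collapses via $(\0 \bsresright x) \ast (b \ast \0) = ((\0 \bsresright x) \cdot b) \ast \0 = (\0 \ast (\0 \bsresright x)) \ast b \leq x \ast b$, using associativity of the biaction, cyclicity of $\0$, and the defining inequality $\0 \ast (\0 \bsresright x) \leq x$; the second coordinate of $\sigma m \cdot \gamma n$ collapses symmetrically to $a \ast y$. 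Taking the join yields $x \ast b \vee a \ast y$, as required.
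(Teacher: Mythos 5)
Your proposal is correct and follows essentially the same route as the paper: each defining equation of a (restricted) Nagata posemigroup is reduced to a comparison of first or second components in the Nagata product, with the extra summands and meet terms collapsing via cyclicity of $\0$, the biaction laws, and residuation; your treatment of $\gamma(m \cdot n) = \gamma(\gamma m \cdot \sigma n) \sqcup \gamma(\sigma m \cdot \gamma n)$ and of $\gamma(m \bsgamma \gamma n) = \sigma m \bsgamma \gamma n$ is exactly the paper's verification, spelled out in slightly more detail. One small correction to a prerequisite: binary joins need \emph{not} exist in $\algSmodMzero$ in general, since by the earlier fact their existence requires the corresponding joins in $\algS$, which is only a posemigroup; the $\gamma$-join semilattice property instead follows because $(\algSmodMzero)_{\gamma}$ is order-isomorphic via $\embedM$ to the join semilattice $\algM$, which is also all that your later identification of $\embedM x \sqcup \embedM y$ with $\embedM(x \vee y)$ actually uses.
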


\begin{proof}
  We already know that the (restricted) Nagata product is a posemigroup where $\sigma$ is an idempotent isotone map (a conucleus) with $\sigma(x \circ y) = \sigma x \circ \sigma y$, that $\gamma$ is a $\sigma$-structural $\sigma$-closure operator (closure operator), and that the required joins and residuals exist. Moreover, $\embedM(\0)$ is a fixpoint of~$\gamma$ by definition. The equalities displayed above are mechanical to verify, as they merely express that the first or the second components of certain expressions in a Nagata product coincide. For example, the equality $\gamma(x \circ y) = (\gamma x \circ \sigma y) \sqcup (\sigma x \circ \gamma y)$ holds because the second components of $(\gamma \pair{a}{x} \circ \sigma \pair{b}{y})$ and $(\sigma \pair{a}{x} \circ \gamma \pair{b}{y})$ are $a \ast y$ and $x \ast b$, while the second component of $\pair{a}{x} \circ \pair{b}{y}$ is $a \ast y \vee x \ast b$. Similarly, the equality $\sigma x \bsgamma \gamma y = \gamma (x \bsgamma \gamma y)$ holds because
\begin{align*}
  \pair{a}{a \ast \0} \bsgamma \pair{\0 \bsresright y}{y} = \pair{a \bsresright (\0 \bsresright y) \wedge (a \ast \0) \bsresright y}{a \bsresright y} = \pair{\0 \bsresright (a \bsresright y)}{a \bsresright y} = \embedM(a \bsresright y)
\end{align*}
  and $\gamma (\pair{a}{x} \bsgamma \pair{\0 \bsresright y}{y}) = \embedM(a \bsresright y)$. Finally, the equalities involving the constant $\0$ hold because the second components of $\pair{a}{a \ast \0}$, $\pair{\0 \bsresright \0}{\0} \cdot \pair{a}{a \ast \0}$, and $\pair{a}{a \ast \0} \cdot \pair{\0 \bsresright \0}{\0}$ coincide, as do the first components of $\pair{\0 \bsresright x}{x}$, $\pair{\0 \bsresright \0}{\0} \bs \pair{\0 \bsresright x}{x}$, and $\pair{\0 \bsresright x}{x} / \pair{\0 \bsresright \0}{\0}$.
\end{proof}                                                       

\begin{theorem}[Adjunction for Nagata posemigroups] \label{thm: nagata structures}
  The (restricted) Nagata product functor going from the \mbox{category} of cyclic pointed residuated bimodules to the \mbox{category} of (restricted) Nagata posemigroups is right adjoint to the structural bimodule functor. The unit of the adjunction is the embedding ${m \mapsto \pair{\sigma m}{\gamma m}}$, the counit is the inverse of the iso\-morphism $\pair{\embedS}{\embedM}$.
\end{theorem}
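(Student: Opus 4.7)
The plan is to exhibit the unit and counit explicitly as stated in the theorem and verify naturality plus the two triangle identities. Writing $F$ for the structural bimodule functor and $G$ for the (restricted) Nagata product functor, the counit $\varepsilon_{\algM}\colon F(G(\algM)) \to \algM$ is the inverse of $\pair{\embedS}{\embedM}$; that it is an isomorphism of cyclic pointed residuated bimodules is already established by Theorem~\ref{thm: counit is iso} and Proposition~\ref{prop: recovering actions}, and its naturality in $\algM$ is routine. The substantive work is therefore to show that the unit $\eta_{\algN}\colon m \mapsto \pair{\sigma m}{\gamma m}$ is a well-defined morphism of (restricted) Nagata posemigroups from $\algN$ into $\algN_{\sigma} \bowtie_{\0} \algN_{\gamma}$.

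Well-definedness of $\eta$ in the restricted case reduces, via Proposition~\ref{prop: structural action}, to $\gamma(\sigma m \cdot \0) \leq \gamma m$ and $\gamma(\0 \cdot \sigma m) \leq \gamma m$, which follow from the axioms $\gamma \sigma m = \gamma(\sigma m \cdot \0) = \gamma(\0 \cdot \sigma m)$ together with $\sigma m \leq m$. Preservation of multiplication is captured by comparing $\eta(m \cdot n) = \pair{\sigma(m \cdot n)}{\gamma(m \cdot n)}$ with
\begin{align*}
\eta(m) \circ \eta(n) = \pair{\sigma m \cdot \sigma n}{\gamma(\gamma m \cdot \sigma n) \sqcup \gamma(\sigma m \cdot \gamma n)},
\end{align*}
which match exactly by the axioms $\sigma(x \cdot y) = \sigma x \cdot \sigma y$ and $\gamma(x \cdot y) = \gamma(\gamma x \cdot \sigma y) \sqcup \gamma(\sigma x \cdot \gamma y)$ once the biaction on the structural bimodule is unfolded as $a \ast x = \gamma(a \cdot x)$. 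Preservation of $\sigma$ and $\gamma$ is immediate from their formulas on the Nagata product. Preservation of the constant $\0$, of the partial residuals $\bsgamma, \sgamma$, and of the partial join $\sqcup$ then follows by matching the explicit formulas in $\algSmodMzero$ (from the propositions on residuals and the fact on joins in $\algSmodMzero$) against the remaining axioms of Definition~\ref{def: nagata structures}; for instance, using $\pair{a}{x} \bs \embedM(y) = \pair{x \bsresright y}{a \bsresleft y}$ together with $x \bsresright y = \sigma(x \bsgamma y)$ and $a \bsresleft y = \gamma(a \bsgamma y)$ from the structural bimodule, preservation of $\bsgamma$ reduces precisely to the pair of axioms $\sigma(x \bsgamma \gamma y) = \sigma(\gamma x \bsgamma \gamma y)$ and $\gamma(x \bsgamma \gamma y) = \gamma(\sigma x \bsgamma \gamma y)$. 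Injectivity (indeed order-reflection) of $\eta$ is then immediate from the quasi-inequality in Definition~\ref{def: nagata structures}.

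With $\eta$ confirmed as a morphism and $\varepsilon$ as an isomorphism, the triangle identities $\varepsilon_{F\algN} \circ F\eta_{\algN} = \idmap_{F\algN}$ and $G\varepsilon_{\algM} \circ \eta_{G\algM} = \idmap_{G\algM}$ reduce, via the explicit formulas for $\embedS, \embedM, \sigma, \gamma$ on the Nagata product, to componentwise verifications. The main obstacle I anticipate is the bookkeeping in the third step: matching preservation of the partial residuals $\bsgamma$ and $\sgamma$ against the somewhat technical axioms of Definition~\ref{def: nagata structures}, which is precisely where those axioms earn their keep. Once the residual computations are dispatched and the quasi-inequality has been invoked for order-reflection, the rest of the proof is straightforward unfolding of definitions.
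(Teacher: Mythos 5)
Your proposal is correct and follows essentially the same route as the paper's proof: cite the prior facts that the (restricted) Nagata product is a (restricted) Nagata posemigroup and that the counit is an isomorphism, show the unit $m \mapsto \pair{\sigma m}{\gamma m}$ is a well-defined homomorphism by matching componentwise formulas against exactly the equations of Definition~\ref{def: nagata structures}, get order-reflection from the quasi-inequality, and check the triangle identities componentwise. One small quibble: preservation of $\sigma$ and $\gamma$ by the unit is not quite ``immediate from their formulas,'' since $\sigma\pair{\sigma m}{\gamma m} = \pair{\sigma m}{\gamma(\sigma m \cdot \0)}$ and $\gamma\pair{\sigma m}{\gamma m} = \pair{\sigma(\0 \bsgamma \gamma m)}{\gamma m}$, so it rests on the axioms $\gamma\sigma x = \gamma(\sigma x \cdot \0)$ and $\sigma\gamma x = \sigma(\0 \bsgamma \gamma x)$ --- but these are among the axioms you invoke anyway, so the argument is unaffected.
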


\begin{proof}
  We already know that the (restricted) Nagata product is a (restricted) Nagata posemigroup and that the structural bimodule of a (restricted) Nagata posemigroup is a cyclic pointed residuated bimodule. We also know that the counit is an isomorphism of cyclic pointed residuated bimodules (Theorem~\ref{thm: counit is iso}). The map $\unit$ is injective because $m \nleq n$ implies that either $\sigma m \nleq \sigma n$ or $\gamma m \nleq \gamma n$. The triangle equalities are easy to verify: one of them yields the sequence $\pair{a}{x} \mapsto \pair{\pair{a}{a \ast \0}}{\pair{\0 \bsresright x}{x}} \mapsto \pair{a}{x}$, while the other yields the two sequences $a \mapsto \pair{a}{\gamma(a \cdot \0)} \mapsto a$ and $x \mapsto \pair{\0 \bsresright x}{x} \mapsto x$. The fact that for each element $m$ of a restricted Nagata posemigroup the pair $\pair{\sigma m}{\gamma m}$ indeed lies in the restricted Nagata product of the structural bimodule, i.e.\ that $\sigma m \ast \0 \leq \gamma m$, holds because $\sigma m \ast \0 = \gamma(\sigma m \cdot \0) = \gamma \sigma m \leq \gamma m$, since $\sigma m \leq m$ in a restricted Nagata posemigroup. It remains to verify that $\unit$ is a homomorphism of Nagata posemigroups.

  The equations in the definition of Nagata posemigroups are precisely those needed for this purpose. The equations involving product are equivalent to $\unit$ preserving multiplication: $\unit(x \cdot y) = \pair{\sigma (x \cdot y)}{\gamma (x \cdot y)}$ and
\begin{align*}
  \unit(x) \circ \unit(y) & = \pair{\sigma x}{\gamma x} \circ \pair{\sigma y}{\gamma y} = \pair{\sigma x \cdot \sigma y}{\sigma x \ast \gamma y \sqcup \gamma x \ast \sigma y} = \\
  & = \pair{\sigma x \cdot \sigma y}{\gamma(\sigma x \cdot \gamma y) \sqcup \gamma(\gamma x \cdot \sigma y)}.
\end{align*}
  The equations involving residuals are equivalent to $\unit$ preserving the appropriate residuals: $\unit(x \bsgamma y) = \pair{\sigma (x \bsgamma y)}{\gamma (x \bsgamma y)}$ and
\begin{align*}
  \unit(x) \bsgamma \unit(y) = \pair{\sigma x}{\gamma x} \bsgamma \pair{\sigma y}{\gamma y} = \pair{\gamma x \bsresright \gamma y}{\sigma x \bsresleft \gamma y} = \pair{\sigma(\gamma x \bsgamma y)}{\gamma(\sigma x \bsgamma y)},
\end{align*}
  Finally, the equality between $\unit(\sigma x) = \pair{\sigma x}{\gamma \sigma x}$ and $\sigma \unit(x) = \pair{\sigma x}{\sigma x \ast \0} = \pair{\sigma x}{\gamma(\sigma x \cdot \0)}$, the equality between $\unit(\gamma x) = \pair{\sigma \gamma x}{\gamma x}$ and $\gamma \unit(x) = \pair{\0 \bsresright \gamma x}{\gamma x} = \pair{\sigma(\0 \bsgamma x)}{\gamma x}$, and the equality between $\unit(\0) = \pair{\sigma \0}{\gamma \0}$ and $\pair{\0 \bsresright \0}{\0} = \pair{\sigma(\0 \bsgamma \0)}{\0}$ (which is the interpretation of the constant $\0$ in the Nagata product) are immediate consequences of the last four equations defining Nagata posemigroups.
\end{proof}

  The above theorem is a minimalist, bare-bones form of the Nagata adjunction: it contains nothing more than what is strictly needed to make the adjunction work. Adding further structure to both sides of the adjunction is now simply a matter of verifying some further equations. In particular, we flesh out the maximalist variant of the Nagata adjunction below, where we assume as much lattice-theoretic and residuated structure as one can possibly want on both sides of the adjunction.

\begin{definition}[Nagata lattices] \label{def: nagata rls}
  A \emph{Nagata lattice} is a residuated $\ell$-semigroup $\algN$ equipped with a pre-conucleus~$\sigma$, a $\sigma$-structural $\sigma$-closure operator $\gamma$, and constants $\1 \in \algNsigma$ and $\0 \in \algNgamma$ such that the following equations and quasi-equations are satisfied:
\begin{align*}
  \sigma (x \cdot y) & = \sigma x \cdot \sigma y & \gamma(x \cdot y) & = \sigma x \cdot \gamma y \vee \gamma x \cdot \sigma y \\
  \sigma (x \wedge y) & = \sigma (\sigma x \wedge \sigma y) & \gamma (x \vee y) & = \gamma (\gamma x \vee \gamma y) \\
  \sigma (x \vee y) & = \sigma x \vee \sigma y & \gamma (x \wedge y) & = \gamma x \wedge \gamma y
\end{align*}
\begin{align*}
  \sigma(x \bs y) & = \sigma(\sigma x \bs \sigma y \wedge \gamma x \bs \gamma y) & \sigma x \bs \gamma y & = \gamma (x \bs y) \\
  \sigma(y / x) & = \sigma(\sigma y / \sigma x \wedge \gamma y / \gamma x) & \gamma y / \sigma x & = \gamma (y / x)
\end{align*}
\begin{align*}
  \sigma \gamma x & = \sigma(\0 \bs \gamma x) & \gamma \sigma x & = \gamma(\0 \cdot \sigma x) \\
  \sigma \gamma x & = \sigma(\gamma x / \0) & \gamma \sigma x & = \gamma(\sigma x \cdot \0)
\end{align*}
\begin{align*}
  & \1 \leq \sigma(x \bs x) & \1 \leq \sigma(x \bs y) & \implies x \leq y \\
  & \1 \leq \sigma(x / x) & \1 \leq \sigma(y / x) & \implies x \leq y
\end{align*}
  A \emph{restricted Nagata lattice} is a residuated lattice $\algN$ equipped with a unital conucleus $\sigma$, a $\sigma$-structural closure operator $\gamma$, and a constant $\0 \in \algNgamma$ such that the following equations are satisfied:
\begin{align*}
  \sigma(x \cdot y) & = \sigma x \cdot \sigma y & x \cdot y & = \sigma x \cdot y \vee x \cdot \sigma y
\end{align*}
\begin{align*}
  \sigma x \bs y \wedge x \bs \gamma y & = x \bs y & \sigma x \bs \gamma y & = \gamma (x \bs y) \\
  x / \sigma y \wedge \gamma x / y & = x / y & \gamma y / \sigma x & = \gamma (y / x)
\end{align*}
\begin{align*}
  \sigma \gamma x & = \sigma(\0 \bs \gamma x) & \gamma \sigma x & = \gamma(\0 \cdot \sigma x) \\
  \sigma \gamma x & = \sigma(\gamma x / \0) & \gamma \sigma x & = \gamma(\sigma x \cdot \0)
\end{align*}
\end{definition}

\begin{lemma} \label{lemma: nagata implication}
  Every (restricted) Nagata lattice satisfies the implication
\begin{align*}
  \sigma x \leq \sigma y \text{ and } \gamma x \leq \gamma y \implies x \leq y.
\end{align*}
\end{lemma}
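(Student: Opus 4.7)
The plan is to treat the non-restricted and restricted cases separately, reducing each to an axiom already present in the corresponding definition.

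For a Nagata lattice (the non-restricted case), I aim to establish $\1 \leq \sigma(x \bs y)$ from the hypotheses $\sigma x \leq \sigma y$ and $\gamma x \leq \gamma y$, at which point the quasi-inequality $\1 \leq \sigma(x \bs y) \Rightarrow x \leq y$ in Definition~\ref{def: nagata rls} finishes the argument. Starting from the axiom $\1 \leq \sigma(y \bs y)$ and rewriting via the decomposition equation $\sigma(u \bs v) = \sigma(\sigma u \bs \sigma v \wedge \gamma u \bs \gamma v)$ with $u = v = y$, it suffices to show
\begin{align*}
\sigma(\sigma y \bs \sigma y \wedge \gamma y \bs \gamma y) \leq \sigma(\sigma x \bs \sigma y \wedge \gamma x \bs \gamma y),
\end{align*}
since the right-hand side is $\sigma(x \bs y)$ by the same axiom applied to the pair $(x, y)$. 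The antitonicity of $\bs$ in its left argument converts $\sigma x \leq \sigma y$ into $\sigma y \bs \sigma y \leq \sigma x \bs \sigma y$ and $\gamma x \leq \gamma y$ into $\gamma y \bs \gamma y \leq \gamma x \bs \gamma y$; these combine via the isotonicity of $\wedge$ and of $\sigma$. The analogous argument using $/$ in place of $\bs$ is available if desired but is not needed.

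For a restricted Nagata lattice the contractivity $\sigma x \leq x$ and extensivity $x \leq \gamma x$ come for free, so I would argue more directly. From $\sigma x \leq \sigma y \leq y$ and $x \leq \gamma x \leq \gamma y$ I obtain $\1 \leq \sigma x \bs y$ and $\1 \leq x \bs \gamma y$, hence $\1 \leq \sigma x \bs y \wedge x \bs \gamma y = x \bs y$ by the restricted-case axiom $\sigma x \bs y \wedge x \bs \gamma y = x \bs y$, and $x \leq y$ then follows because $\1$ is the multiplicative unit of the underlying residuated lattice.

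I do not anticipate any real obstacle: the axioms of (restricted) Nagata lattices have been calibrated precisely so that $x \bs y$, or at least $\sigma(x \bs y)$, is governed by the four quantities $\sigma x$, $\sigma y$, $\gamma x$, $\gamma y$, so the lemma is essentially forced once one identifies which axioms to invoke. The only minor care needed is to apply the decomposition axiom \emph{twice}, once to extract the hypothesis $\1 \leq \sigma(y \bs y)$ into a form involving $\sigma y, \gamma y$, and once to reassemble the conclusion into $\sigma(x \bs y)$.
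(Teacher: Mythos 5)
Your proof is correct, and in the restricted case it coincides with the paper's argument verbatim: $\sigma x \leq y$ and $x \leq \gamma y$ give $\1 \leq \sigma x \bs y \wedge x \bs \gamma y = x \bs y$, hence $x \leq y$. In the unrestricted case both you and the paper funnel the hypotheses into $\1 \leq \sigma(x \bs y)$ via the decomposition axiom $\sigma(x \bs y) = \sigma(\sigma x \bs \sigma y \wedge \gamma x \bs \gamma y)$ and then invoke the quasi-equation $\1 \leq \sigma(x \bs y) \implies x \leq y$, so the strategy is the same; the difference is only in how $\1 \leq \sigma(x \bs y)$ is reached. The paper asserts $\1 \leq \sigma x \bs \sigma y \wedge \gamma x \bs \gamma y$ directly from the hypotheses and then applies $\sigma$ together with $\sigma\1 = \1$, a step which tacitly uses that $\1$ behaves like a unit against the elements $\sigma x$ and $\gamma x$, even though a (non-restricted) Nagata lattice is only a residuated $\ell$-semigroup with $\1 \in \algNsigma$. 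You instead start from the axiom $\1 \leq \sigma(y \bs y)$, rewrite it by the decomposition axiom, and use antitonicity of $\bs$ in its first argument plus isotonicity of $\wedge$ and $\sigma$ to pass to $\sigma(\sigma x \bs \sigma y \wedge \gamma x \bs \gamma y) = \sigma(x \bs y)$; this chain uses only the stated axioms and standard residuation facts, so it is, if anything, slightly more careful than the paper's wording while buying the same conclusion.
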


\begin{proof}
  In the case of Nagata lattices, if $\sigma x \leq \sigma y$ and $\gamma x \leq \gamma y$, then $\1 \leq \sigma x \bs \sigma y \wedge \gamma x \bs \gamma y$, so $\1 = \sigma(\1) \leq \sigma(\sigma x \bs \sigma y \wedge \gamma \bs \gamma y) = \sigma(x \bs y)$ and $x \leq y$ by the quasi-equation $\1 \leq \sigma(x \bs y) \implies x \leq y$. In the case of restricted Nagata lattices, if $\sigma x \leq \sigma y$ and $\gamma x \leq \gamma y$, then $\sigma x \leq y$ and $x \leq \gamma y$, so $\1 \leq \sigma x \bs y \wedge x \bs \gamma y = x \bs y$ and $x \leq y$.
\end{proof}

\begin{fact}
  The (restricted) Nagata product of a cyclic pointed residuated $\ell$-bimodule over a residuated lattice is a (restricted) Nagata lattice.
\end{fact}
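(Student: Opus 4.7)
The plan is to leverage Fact~\ref{fact: nagata posemigroups}, which already establishes that $(\algSmodM, \sigma, \gamma, \embedM(\0))$ and $(\algSmodMzero, \sigma, \gamma, \embedM(\0))$ are (restricted) Nagata posemigroups. Two things remain: to upgrade the underlying posemigroup to a residuated $\ell$-semigroup or a residuated lattice via the two Theorems on residuated $\ell$-semigroups and residuated lattices arising from bimodules stated earlier in this section, and to verify the extra equations and quasi-equations that appear in Definition~\ref{def: nagata rls} but not in Definition~\ref{def: nagata structures}. The distinguished constant $\1$ is interpreted as $\embedS(\1) = \pair{\1}{\1 \ast \0}$, which lies in $(\algSmodM)_{\sigma}$.

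The extra equations split into three groups, all handled by the same componentwise style of calculation used in Fact~\ref{fact: nagata posemigroups}. The meet/join equations for $\sigma$ and $\gamma$ reduce to identities such as $(a \vee b) \ast \0 = (a \ast \0) \vee (b \ast \0)$, which hold because the residuated biaction preserves existing joins. The residual equations --- for example $\sigma x \bs \gamma y = \gamma(x \bs y)$ --- are verified by expanding both sides using the explicit formulas for $\pair{a}{u} \bs \pair{b}{v}$ and $\pair{b}{v} / \pair{a}{u}$ displayed in the introduction, where the $\wedge$-terms inside the residuals collapse correctly once $\sigma$ or $\gamma$ is applied, using cyclicity of~$\0$. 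The equations involving $\0$ on the last two lines of Definition~\ref{def: nagata rls} coincide with those already treated in Fact~\ref{fact: nagata posemigroups}.

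For the quasi-equations, unitality of the biaction ($\1 \ast u = u = u \ast \1$) is the key ingredient: $\1 \leq \sigma(x \bs x)$ reduces on first components to the trivial $\1 \leq a \bs a$ together with $\1 \leq u \bsresright u$ (equivalent to $\1 \ast u \leq u$), and $\1 \leq \sigma(x \bs y) \Rightarrow x \leq y$ unwraps on first components to $a \leq b$ and $u = \1 \ast u \leq v$, giving $\pair{a}{u} \leq \pair{b}{v}$. In the restricted case, the equations simplify further because $a \ast \0, \0 \ast a \leq u$ holds throughout $\algSmodMzero$; for instance $x \cdot y = \sigma x \cdot y \vee x \cdot \sigma y$ expands on second components to $(u \ast b) \vee (a \ast v) = ((a \cdot b) \ast \0) \vee (u \ast b) \vee (a \ast v)$, which collapses because $(a \cdot b) \ast \0 = (a \ast \0) \ast b \leq u \ast b$ by the defining inequality of $\algSmodMzero$. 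The bulk of the work is routine bookkeeping; the main point that requires attention is checking that the $\wedge$-terms inside the residuals simplify correctly under $\sigma$ and $\gamma$, but this is implicit in the analogous verifications for Nagata posemigroups and presents no conceptual obstacle.
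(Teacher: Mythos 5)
Your proposal is correct and follows essentially the same route as the paper's proof: it reduces the additional Nagata-lattice equations to componentwise computations in the style of Fact~\ref{fact: nagata posemigroups}, reads the quasi-equations off the first components using the unit $\pair{\1}{\0}=\embedS(\1)$, and verifies the restricted-case equations by expanding and absorbing the extra term via the defining inequality $a \ast \0 \leq x$ of $\algSmodMzero$, exactly as in the paper's explicit calculations. (Only a cosmetic slip: $\1 \leq u \bsresright u$ unwinds to $u \ast \1 \leq u$ rather than $\1 \ast u \leq u$, which is equally immediate from unitality of the biaction.)
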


\begin{proof}
  The equalities $\sigma(x \vee y) = \sigma x \vee \sigma y$ and $\gamma(x \wedge y) = \gamma x \wedge \gamma y$ are straightforward to verify. The equalities involving $\0$ were already verified in Fact~\ref{fact: nagata posemigroups}. The equalities involving $\gamma (x \bs y)$ and $\gamma (y / x)$ plus the equalities $\sigma (x \cdot y)$ and $\gamma(x \cdot y)$ are verified in the same way as the corresponding equalities in Fact~\ref{fact: nagata posemigroups}. The equality $\1 \leq \sigma(x \bs x)$ holds because $\1 = \sigma(\1)$ and the left component of $\pair{a}{x} \bs \pair{a}{x}$ is $a \bs a \wedge x \bsresright x \geq \1$. If $\1 \leq \sigma(\pair{a}{x} \bs \pair{b}{y})$, then, looking at the left component, $\1 \leq a \bs b \wedge x \bsresright y$, so indeed $a \leq b$ and $x \leq y$.

  It only remains to verify the equalities $\sigma x \bs y \wedge x \bs \gamma y = x \bs y$, $x / \sigma y \wedge \gamma x / y = x / y$, and $x \circ y = \sigma x \circ y \vee x \circ \sigma y$ in the restricted case. We have:
\begin{align*}
  \sigma \pair{a}{x} \circ \pair{b}{y} \vee \pair{a}{x} \circ \sigma \pair{b}{y} & = \pair{a}{a \ast \0} \circ \pair{b}{y} \vee \pair{a}{x} \circ \pair{b}{b \ast \0} \\
  & = \pair{a \cdot b}{a \ast y \vee (a \ast \0 \ast b)} \vee \pair{a \cdot b}{a \ast (b \ast \0) \vee x \ast b} \\
  & = \pair{a \cdot b}{a \ast y \vee x \ast b} \\
  & = \pair{a}{x} \circ \pair{b}{y},
\end{align*}
  and
\begin{align*}
  \sigma \pair{a}{x} \bs \pair{b}{y} \wedge \pair{a}{x} \bs \gamma \pair{b}{y} & = \pair{a}{a \ast \0} \bs \pair{b}{y} \wedge \pair{a}{x} \bs \pair{\0 \bsresright y}{y} \\
  & = \pair{a \bs b \wedge (a \ast \0) \bsresright y}{a \ast y} \wedge \pair{a \bs (\0 \bsresright y) \wedge x \bsresright y}{a \ast y} \\
  & = \pair{a \bs b \wedge x \bsresright y}{a \ast y} \\
  & = \pair{a}{x} \bs \pair{b}{y}.
\end{align*}
  The last equality is entirely analogous.
\end{proof}

\begin{theorem}[Adjunction for Nagata residuated lattices] \label{thm: nagata rls}
  The (restricted) Nagata product functor going from the \mbox{category} of cyclic pointed residuated $\ell$-bimodules over a residuated lattice to the \mbox{category} of (restricted) Nagata lattices is the right adjoint of the \mbox{structural} bimodule functor. The unit of this adjunction is the embedding ${{m \mapsto \pair{\sigma m}{\gamma m}}}$, the counit is the inverse of the isomorphism $\pair{\embedS}{\embedM}$.
\end{theorem}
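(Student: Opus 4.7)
The plan is to follow the blueprint of Theorem~\ref{thm: nagata structures} and extend it to the richer signature. Four tasks arise: (i) check that the structural bimodule of a (restricted) Nagata lattice is an $\ell$-bimodule over a residuated lattice (the object part of the structural bimodule functor); (ii) confirm that the (restricted) Nagata product of such a bimodule is a (restricted) Nagata lattice (the object part of the Nagata product functor); (iii) verify that the unit $\unit\colon m \mapsto \pair{\sigma m}{\gamma m}$ is a homomorphism of (restricted) Nagata lattices; (iv) inherit injectivity of $\unit$ from Lemma~\ref{lemma: nagata implication}, the counit from Theorem~\ref{thm: counit is iso}, and the triangle identities from the posemigroup case.

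For (i), the axiom $\sigma(x \vee y) = \sigma x \vee \sigma y$ lets $\algNsigma$ inherit binary joins from $\algN$ and meets of $\sigma$-fixpoints are already $\sigma(x \wedge y)$, so $\algNsigma$ is a sublattice of $\algN$; dually for $\algNgamma$ using $\gamma(x \wedge y) = \gamma x \wedge \gamma y$ and $\gamma(x \vee y) = \gamma(\gamma x \vee \gamma y)$. Since $\sigma$ is a conucleus on a residuated $\ell$-semigroup, $\algNsigma$ is itself a residuated $\ell$-semigroup, and in the restricted case the unitality of $\sigma$ upgrades it to a residuated lattice. Proposition~\ref{prop: structural action} then supplies the residuated $\algNsigma$-bimodule structure on $\algNgamma$, while the four axioms involving $\0$ in Definition~\ref{def: nagata rls} make $\0$ cyclic. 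Task (ii) follows the pattern of Fact~\ref{fact: nagata posemigroups}: each axiom in Definition~\ref{def: nagata rls} merely records the fact that the first or the second component of some Nagata-product expression agrees with $\sigma$ or $\gamma$ of its counterpart. The quasi-inequalities $\1 \leq \sigma(x \bs x)$ and $\1 \leq \sigma(x \bs y) \implies x \leq y$ in the non-restricted case hold because the first coordinate of $\pair{a}{x} \bs \pair{a}{x}$ is above $\1$, and the second quasi-inequality is order reflection in each coordinate.

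For (iii), preservation of multiplication, the restricted residuals $\bsgamma$ and $\sgamma$, the maps $\sigma$ and $\gamma$, and the constant $\0$ is already established in Theorem~\ref{thm: nagata structures}. What is new is preservation of binary meets and joins, the full residuals of $\algN$, and (in the non-restricted case) the constant $\1$. Meets and joins are handled componentwise using the four lattice axioms; for residuals, $\unit(x) \bs \unit(y)$ has first coordinate $\sigma x \bs \sigma y \wedge \gamma x \bsresright \gamma y = \sigma(\sigma x \bs \sigma y \wedge \gamma x \bs \gamma y)$, which matches $\sigma(x \bs y)$ by the first residual axiom, and second coordinate $\sigma x \bsresleft \gamma y = \gamma(\sigma x \bs \gamma y) = \gamma(x \bs y)$ by the second; the restricted case is analogous, using $\sigma x \bs y \wedge x \bs \gamma y = x \bs y$ in place of the longer first axiom. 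Preservation of $\1$ reads $\pair{\1}{\gamma(\1 \cdot \0)} = \pair{\sigma \1}{\gamma \sigma \1}$, which is immediate from $\sigma \1 = \1$ and the axiom $\gamma \sigma x = \gamma(\sigma x \cdot \0)$.

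The main obstacle is bookkeeping rather than conceptual novelty. The delicate points are (a) checking that in the restricted case the shorter axioms (e.g.\ $x \cdot y = \sigma x \cdot y \vee x \cdot \sigma y$ in place of $\gamma(x \cdot y) = \sigma x \cdot \gamma y \vee \gamma x \cdot \sigma y$) still make $\unit$ preserve the operation, which relies on $\sigma$-structurality to bridge $\gamma(\sigma x \cdot y) = \gamma(\sigma x \cdot \gamma y)$, and (b) arranging the residual axioms so that the first coordinate of the Nagata-product residual, an awkward meet of divisions, collapses to $\sigma$ of the $\algN$-residual. Once the table matching axioms in Definition~\ref{def: nagata rls} to preservation properties of $\unit$ is filled in, the adjunction follows mechanically from the posemigroup case.
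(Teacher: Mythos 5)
Your proposal is correct and follows essentially the same route as the paper: reduce to the Nagata posemigroup adjunction (Theorem~\ref{thm: nagata structures}) — using Lemma~\ref{lemma: nagata implication} to see that every (restricted) Nagata lattice is a (restricted) Nagata posemigroup — note that the counit, being an isomorphism, automatically preserves all meets, joins, and residuals, and then check that the unit additionally preserves meets, joins, residuals, and the constants via the extra equations of Definition~\ref{def: nagata rls}; your task (ii) is exactly the Fact the paper proves just before the theorem. The only cosmetic slips are that $\algNsigma$ and $\algNgamma$ are lattices but not sublattices of $\algN$ (their meets, resp.\ joins, are $\sigma(x \wedge y)$, resp.\ $\gamma(x \vee y)$), and that in the unrestricted case $\sigma$ is only a pre-conucleus rather than a conucleus; neither affects the argument.
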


\begin{proof}
  Each (restricted) Nagata lattice is a (restricted) Nagata, using Lemma~\ref{lemma: nagata implication} and observing that the equation $x \cdot y = \sigma x \cdot y \vee x \cdot \sigma y$ implies the equation $\gamma (x \cdot y) = (\gamma x \cdot \sigma y) \veegamma (\sigma x \cdot \gamma y)$. Given the adjunction for Nagata structures and the fact that the counit is an isomorphism (and therefore it preserves all existing meets, joins, and residuals), it only remains to prove that the unit $\unit$ preserves meets, joins, and residuals.

  The equations $\sigma(x \vee y) = \sigma x \vee \sigma y$ and $\gamma(x \wedge y) = \gamma x \wedge \gamma y$ ensure that the unit preserves meets and joins. Because $\unit$ preserves multiplication, $\unit(m \bs n) \leq \unit(m) \bs \unit(n)$ and $\unit(m / n) \leq \unit(m) / \unit(n)$. Conversely, $\unit(m \bs n) = \pair{\sigma(m \bs n)}{\gamma(m \bs n)}$ and $\unit(m) \bs \unit(n) = \pair{\sigma m}{\gamma m} \bs \pair{\sigma n}{\gamma n} = \pair{\sigma(\sigma m \bs \sigma n \wedge \sigma(\gamma m \bs \gamma y))}{\sigma m \bs \gamma n}$, which is ensured by the equation involving $\sigma(x \bs y)$.
\end{proof}

\begin{fact}
  The unit of the above adjunction is an iso\-morphism if and only if for each $a \in \algNsigma$ and $x \in \algNgamma$ (such that $a \cdot \0 = \0 \cdot a \leq x$, in the restricted case) there is some $m \in \algN$ such that $\sigma m = a$ and $\gamma m = x$.
\end{fact}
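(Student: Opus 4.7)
The approach is to reduce the biconditional to the surjectivity of $\unit$. By Theorem~\ref{thm: nagata structures}, $\unit$ is already a well-defined embedding of (restricted) Nagata posemigroups from $\algN$ into the (restricted) Nagata product of the structural bimodule of $\algN$. As an injective homomorphism between algebras of the same signature, $\unit$ is an isomorphism precisely when it is surjective. So the task is to show that the displayed condition on $\algN$ is equivalent to the image $\{\pair{\sigma m}{\gamma m} : m \in \algN\}$ exhausting the (restricted) Nagata product of $\algNgamma$ over $\algNsigma$.

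The next step is to describe this target explicitly. In the unrestricted case, the Nagata product $\algNsigma \nagtimes \algNgamma$ consists by definition of all pairs $\pair{a}{x}$ with $a \in \algNsigma$ and $x \in \algNgamma$, so surjectivity of $\unit$ literally reads as the stated condition. In the restricted case, one additionally requires $a \ast \0 \leq x$ and $\0 \ast a \leq x$, where $\ast$ denotes the structural action. Unfolding $a \ast \0 = \gamma(a \cdot \0)$ and $\0 \ast a = \gamma(\0 \cdot a)$ and using $x = \gamma x$ for $x \in \algNgamma$, these two inequalities reduce to $a \cdot \0 \leq x$ and $\0 \cdot a \leq x$, matching the side condition in the statement; cyclicity of $\0$ in the structural bimodule, which ensures the two constraints coincide, follows from the axioms $\gamma \sigma x = \gamma(\0 \cdot \sigma x) = \gamma(\sigma x \cdot \0)$ of a Nagata posemigroup.

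Given these identifications, both directions of the biconditional are immediate. If $\unit$ is an isomorphism then it is in particular surjective, so every pair $\pair{a}{x}$ in the (restricted) Nagata product --- that is, every $a \in \algNsigma$ and $x \in \algNgamma$ satisfying the side condition --- is of the form $\pair{\sigma m}{\gamma m}$ for some $m \in \algN$. Conversely, the condition in the statement gives surjectivity of $\unit$, and combined with the embedding property already established by Theorem~\ref{thm: nagata structures}, this promotes $\unit$ to an isomorphism of (restricted) Nagata posemigroups.

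There is essentially no substantive obstacle; the only point requiring care is the restricted-case translation, which amounts to unfolding the definition of the structural action and observing that $\gamma x = x$ for $x \in \algNgamma$, so that the side condition internal to $\algN$ matches the membership condition defining the restricted Nagata product.
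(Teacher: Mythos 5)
Your proof is correct and takes essentially the same approach as the paper: both reduce the claim to surjectivity of the (already injective, order-reflecting) unit and then unfold the membership condition for the restricted Nagata product of the structural bimodule, using $x = \gamma x$ to translate $\gamma(a \cdot \0) \leq x$ into $a \cdot \0 \leq x$. Your explicit note that the two constraints coincide via the cyclicity axioms $\gamma\sigma x = \gamma(\0 \cdot \sigma x) = \gamma(\sigma x \cdot \0)$ is a slightly more careful reading than the paper's terse ``$a \cdot \0 = \0 \cdot a \leq x$'' (which, read literally, asserts an equality in $\algN$ that the axioms only give after applying $\gamma$), but the substance is the same.
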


\begin{proof}
  The Nagata product of the structural bimodule consists precisely of the pairs $\pair{a}{x}$ such that $a \ast \0 = \0 \ast a \leq x$ in the structural bimodule, i.e.\ $\gamma(a \cdot \0) = \gamma(\0 \cdot a) \leq x$ in $\algN$, which is equivalent to $a \cdot \0 = \0 \cdot a \leq x$ given that $x \in \algNgamma$. Our condition now states that each such pair is the image of some $m \in \algN$ via $m \mapsto \pair{\sigma m}{\gamma m}$.
\end{proof}

\section{The Nagata equivalence for bilattices and sesquilattices}
\label{sec: bilattices}

  The Nagata adjuction fails to be a categorical equivalence because its unit map $m \mapsto \pair{\sigma m}{\gamma m}$ need not be surjective. However, we can hope to obtain a categorical equivalence by modifying the Nagata adjunction in one of two ways.

  One option is to take inspiration from Sendlewski's categorical equivalence for Nelson algebras~\cite{sendlewski90}. In its basic form, the twist product construction for Nelson algebras has the same defect as the Nagata product construction: a general Nelson algebra is only isomorphic to a subalgebra of a twist product of a Heyting algebra~$\alg{H}$. However, Sendlewski observed that each such subalgebra is determined by a suitable filter $F$ on $\alg{H}$. Thus, each Nelson algebra is isomorphic to the result of a refined twist product construction whose input is the pair $\pair{\alg{H}}{F}$ rather than just~$\alg{H}$. (More precisely, Sendlewski works with the congruence corresponding to the filter $F$.) One could similarly try to describe subalgebras of the restricted Nagata product $\algSmodMzero$ in terms of some additional structure on the $\alg{S}$-bimodule $\alg{M}$. This may well be a direction worth pursuing, but we shall not do so in the current paper.

  The second option is to add further structure to restricted Nagata products such that the presence of this structure will ensure the surjectivity of the unit map $m \mapsto \pair{\sigma m}{\gamma m}$. This structure will be order-theoretic in nature. Namely, in addition to the componentwise partial order
\begin{align*}
  m \leq n & \iff \sigma m \leq \sigma n \text{ and } \gamma m \leq \gamma n,
\end{align*}
  which amounts to
\begin{align*}
  \pair{a}{x} \leq \pair{b}{y} & \iff a \leq b \text{ and } x \leq y,
\end{align*}
  restricted Nagata products also come with the ``twisted'' partial order
\begin{align*}
  m \sqleq n & \iff \sigma n \leq \sigma m \text{ and } \gamma m \leq \gamma n,
\end{align*}
  which amounts to
\begin{align*}
  \pair{a}{x} \sqleq \pair{b}{y} & \iff b \leq a \text{ and } x \leq y,
\end{align*}
  Conversely, the original order can be recovered from the ``twisted'' partial order as
\begin{align*}
  m \leq n \iff \sigma n \sqleq \sigma m \text{ and } \gamma m \sqleq \gamma n.
\end{align*}
  Observe that
\begin{align*}
  & \sigma m \sqleq \sigma n \iff \sigma n \leq \sigma m, & & \gamma m \sqleq \gamma n \iff \gamma m \leq \gamma n.
\end{align*}

  If $\alg{S}$ and $\alg{M}$ are lattices, then the Nagata product $\algSmodM$ is a lattice with respect to the order $\sqleq$, where binary joins and meets are, respectively,
\begin{align*}
  & \pair{a}{x} \oplus \pair{b}{y} \assign \pair{a \wedge b}{x \vee y}, & & \pair{a}{x} \otimes \pair{b}{y} \assign \pair{a \vee b}{x \wedge y}.
\end{align*}
  In other words, $\algSmodM$ then has the structure of a \emph{bilattice}: it comes with two lattice structures such that binary joins and meets in one are isotone with respect to the other. The \emph{restricted} Nagata product $\algSmodMzero$, however, only inherits part of this bilattice structure: for each $\pair{a}{x}, \pair{b}{y} \in \algSmodMzero$ we have $\pair{a}{x} \oplus \pair{b}{y} \in \algSmodMzero$ but not necessarily $\pair{a}{x} \otimes \pair{b}{y} \notin \algSmodMzero$. That is, $\algSmodMzero$ has a structure which is intermediate between that of a lattice and a bilattice. This structure, consisting of a lattice and a join semilattice such that the operations of one are isotone with respect to the other, will be called a \emph{sesquilattice}.

  Observe that each $m \in \algSmodMzero$ decomposes as $m = \sigma m \oplus \gamma m$. In~fact, this sum exists regardless of whether $\alg{S}$ is lattice-ordered.

\begin{fact} \label{fact: sesquilattice decomposition}
  Let $\alg{S}$ be a posemigroup and $\alg{M}$ be a cyclic pointed residuated $\alg{S}$-bimodule. Then for each $m \in \algSmodMzero$
\begin{align*}
  m = \sigma m \oplus \gamma m.
\end{align*}
  Equivalently, if $\embedS a \leq \embedM x$ (i.e.\ if $a \ast \0 \leq x$), then
\begin{align*}
  \pair{a}{x} = \embedS a \oplus \embedM x.
\end{align*} 
\end{fact}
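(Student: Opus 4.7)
The plan is to observe that both formulations of the fact are the same statement expressed slightly differently, since for $m = \pair{a}{x} \in \algSmodMzero$ we have by definition $\sigma m = \pair{a}{a \ast \0} = \embedS a$ and $\gamma m = \pair{\0 \bsresright x}{x} = \embedM x$. Also, the membership condition $\pair{a}{x} \in \algSmodMzero$ (which reads $a \ast \0 \leq x$ and $\0 \ast a \leq x$) collapses to the single condition $a \ast \0 \leq x$ by cyclicity of $\0$, and this is in turn equivalent to $\embedS a \leq \embedM x$ since the latter unfolds, coordinate-wise, to $a \leq \0 \bsresright x$ together with $a \ast \0 \leq x$, and $a \leq \0 \bsresright x \iff \0 \ast a \leq x \iff a \ast \0 \leq x$ by residuation plus cyclicity.

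Given this identification, the only thing left to verify is the computation
\begin{align*}
  \embedS a \oplus \embedM x = \pair{a}{a \ast \0} \oplus \pair{\0 \bsresright x}{x} = \pair{a \wedge (\0 \bsresright x)}{(a \ast \0) \vee x} = \pair{a}{x},
\end{align*}
using the defining formula $\pair{a}{x} \oplus \pair{b}{y} \assign \pair{a \wedge b}{x \vee y}$ for the twisted join. The second coordinate equals $x$ precisely because $a \ast \0 \leq x$, which is our hypothesis. For the first coordinate, the inequality $a \leq \0 \bsresright x$ is what forces $a \wedge (\0 \bsresright x) = a$, and this inequality is just the residuation-plus-cyclicity reformulation of $a \ast \0 \leq x$.

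There is no real obstacle here; the statement is purely bookkeeping. The only subtlety worth flagging explicitly in the write-up is that the equivalence between the two displayed forms of the conclusion rests on the three-way correspondence
\begin{align*}
  a \ast \0 \leq x \iff \0 \ast a \leq x \iff a \leq \0 \bsresright x,
\end{align*}
the first equivalence being cyclicity and the second residuation of the right action. Once this is in place, both the claim $m = \sigma m \oplus \gamma m$ and the claim $\pair{a}{x} = \embedS a \oplus \embedM x$ follow by a single coordinatewise computation of $\oplus$.
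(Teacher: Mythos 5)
Your proposal is correct and follows essentially the same route as the paper: the one-line coordinatewise computation $\embedS a \oplus \embedM x = \pair{a \wedge (\0 \bsresright x)}{(a \ast \0) \vee x} = \pair{a}{x}$, with the first coordinate collapsing via $a \leq \0 \bsresright x$ (residuation plus cyclicity) and the second via $a \ast \0 \leq x$. Your extra remarks on the equivalence of the two formulations are just the bookkeeping the paper leaves implicit.
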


\begin{proof}
  We have $\embedS a \oplus \embedM x = \pair{a}{a \ast \0} \oplus \pair{\0 \bsresright x}{x} = \pair{a \wedge (\0 \bsresright x)}{(a \ast \0) \vee x} = \pair{a}{x}$.
\end{proof}

  The (restricted) Nagata adjunction turns into an equivalence if we expand the single-sorted side of the adjunction by the operations $\oplus$ and $\otimes$ (by the operation~$\oplus$). For the sake of simplicity, we deal only with the maximalist case in the rest of this section. Let us call the expansion of the Nagata product by the operations $\oplus$ and $\otimes$ the \emph{bilattice Nagata product}, and let us call the expansion of the restricted Nagata product by the operation $\oplus$ the \emph{restricted sesquilattice Nagata product}.

\begin{definition}[Nagata bilattices and sesquilattices] \label{def: nagata rsls}
  A \emph{Nagata bilattice} is a Nagata lattice expanded by binary operations $\oplus$ and $\otimes$ which satisfy the equations
\begin{align*}
  \sigma (x \oplus y) & = \sigma (x \wedge y) & \gamma (x \oplus y) & = \gamma (x \vee y) \\
  \sigma (x \otimes y) & = \sigma (x \vee y) & \gamma (x \otimes y) & = \gamma (x \wedge y)
\end{align*}
  A \emph{restricted Nagata sesquilattice} is a restricted Nagata lattice expanded by a binary operation $\oplus$ which satisfies the equations
\begin{align*}
  & \sigma (x \oplus y) = \sigma (x \wedge y), &
  & \gamma (x \oplus y) = \gamma (x \vee y).
\end{align*}
\end{definition}

  It is not immediately obvious that such algebras are indeed bilattices or sesqui\-lattices. However, this will follow from the fact proved below that every Nagata bilattice (every restricted Nagata sesquilattice) is isomorphic to a bilattice Nagata product (a restricted sesquilattice Nagata product).

\begin{fact}
  The (restricted) Nagata product with $\oplus$ of a cyclic pointed residuated lattice-ordered bimodule over a residuated lattice is a Nagata residuated bilattice (a restricted Nagata residuated sesqui\-lattice).
\end{fact}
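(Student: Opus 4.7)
The plan is to build on the preceding fact, which already establishes that the (restricted) Nagata product is a (restricted) Nagata lattice. All equations and quasi-equations from Definition~\ref{def: nagata rls} are therefore in hand, and it suffices to verify the two (four) additional equations of Definition~\ref{def: nagata rsls} involving $\oplus$ and $\otimes$. In the restricted case one must also confirm that $\algSmodMzero$ is closed under $\oplus$: for $\pair{a}{x}, \pair{b}{y} \in \algSmodMzero$, isotonicity of the biaction yields $(a \wedge b) \ast \0 \leq a \ast \0 \leq x \leq x \vee y$ and symmetrically $\0 \ast (a \wedge b) \leq x \vee y$, so $\pair{a}{x} \oplus \pair{b}{y} = \pair{a \wedge b}{x \vee y}$ lies in $\algSmodMzero$.

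The equations themselves reduce to a componentwise inspection. The key observation is that $\sigma \pair{c}{z} = \pair{c}{c \ast \0}$ depends only on the first component of its argument, while $\gamma \pair{c}{z} = \pair{\0 \bsresright z}{z}$ depends only on the second. Since $\pair{a}{x} \oplus \pair{b}{y}$ and $\pair{a}{x} \wedge \pair{b}{y}$ share the first component $a \wedge b$, the equation $\sigma(x \oplus y) = \sigma(x \wedge y)$ is immediate. Since $\pair{a}{x} \oplus \pair{b}{y}$ and $\pair{a}{x} \vee \pair{b}{y}$ share the second component $x \vee y$, the equation $\gamma(x \oplus y) = \gamma(x \vee y)$ is immediate. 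In the bilattice case, the analogous observations for $\otimes$ (first component $a \vee b$ matching that of $\vee$, second component $x \wedge y$ matching that of $\wedge$) yield the remaining two equations.

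There is no genuine obstacle; the verification is entirely mechanical. The only point worth flagging is why the restricted version uses only $\oplus$: the corresponding closure argument for $\otimes$ would require $(a \vee b) \ast \0 \leq x \wedge y$, which in general fails, and this is precisely what forces the restricted Nagata product to be a sesquilattice rather than a full bilattice.
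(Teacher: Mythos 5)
Your proposal is correct and matches the paper's approach: the paper simply declares the $\oplus$/$\otimes$ equations immediate from the definitions, which is exactly the componentwise check you spell out (the first components of $m \oplus n$ and $m \wedge n$ coincide, the second components of $m \oplus n$ and $m \vee n$ coincide, and dually for $\otimes$). Your additional verification that $\algSmodMzero$ is closed under $\oplus$, and your remark on why $\otimes$ fails there, correspond to observations the paper makes in the text preceding the definition of sesquilattices rather than in the proof itself.
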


\begin{proof}
  The equations to be proved are immediate consequences of the definition of $\oplus$ and $\otimes$.
\end{proof}

\begin{theorem}[Equivalence for Nagata bilattices and sesquilattices] \label{thm: nagata rls bilat}
  The bilattice (restricted sesquilattice) Nagata product functor from the \mbox{category} of cyclic pointed residuated lattice-ordered bimodules over a residuated lattice to the \mbox{category} of \mbox{Nagata} bilattices (restricted Nagata sesquilattices) and the \mbox{structural} bimodule functor in the opposite direction form a categorical equivalence. The unit of this equivalence is the isomorphism ${{m \mapsto \pair{\sigma m}{\gamma m}}}$, the counit is the inverse of the isomorphism $\pair{\embedS}{\embedM}$.
\end{theorem}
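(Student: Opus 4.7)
The plan is to upgrade the adjunction of Theorem~\ref{thm: nagata rls} to an equivalence. Because the counit of that adjunction is already an isomorphism by Theorem~\ref{thm: counit is iso}, and because the unit $\unit\colon m \mapsto \pair{\sigma m}{\gamma m}$ is already an injective homomorphism of Nagata lattices (injectivity coming from Lemma~\ref{lemma: nagata implication}), only two things remain: the new operations $\oplus$ (and $\otimes$ in the bilattice case) must be compatible with the functors, and the unit must become surjective.

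The compatibility step will be routine axiom-chasing. First, I check that the (restricted) Nagata product of a cyclic pointed residuated $\ell$-bimodule over a residuated lattice satisfies the axioms of Definition~\ref{def: nagata rsls}; this is immediate by unwinding the componentwise definitions of $\oplus, \otimes, \wedge, \vee$ on pairs $\pair{a}{x}$ and observing that the second coordinate of $\sigma\pair{a}{x}$ and the first coordinate of $\gamma\pair{a}{x}$ depend only on the other coordinate. Second, I check that $\unit$ preserves $\oplus$: the computation $\unit(m) \oplus \unit(n) = \pair{\sigma m \wedge \sigma n}{\gamma m \vee \gamma n}$ matches $\pair{\sigma(m \oplus n)}{\gamma(m \oplus n)} = \unit(m \oplus n)$ by the sesquilattice axioms, and the $\otimes$ case is dual.

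The crux is surjectivity of $\unit$, which by the Fact following Theorem~\ref{thm: nagata rls} reduces to producing, for every $a \in \algNsigma$ and $x \in \algNgamma$ (with $a \cdot \0 = \0 \cdot a \leq x$ in the restricted case), some $m \in \algN$ with $\sigma m = a$ and $\gamma m = x$. In the restricted sesquilattice case, I will extract the inequality $a \leq x$ from the admissibility condition: since $\sigma$ is then a conucleus and $\gamma$ a closure operator, one has $a = \sigma a \leq \gamma \sigma a = \gamma(\sigma a \cdot \0) = \gamma(a \cdot \0) \leq \gamma x = x$, where the middle equality is the restricted Nagata lattice axiom $\gamma \sigma x = \gamma(\sigma x \cdot \0)$. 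With $a \leq x$ in hand, the candidate $m := a \oplus x$ does the job, because $\sigma m = \sigma(a \wedge x) = \sigma a = a$ and $\gamma m = \gamma(a \vee x) = \gamma x = x$ by the sesquilattice axioms together with the collapses $a \wedge x = a$ and $a \vee x = x$.

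The main obstacle is the bilattice case: without the admissibility condition there is no ordering $a \leq x$ to exploit, and $a \oplus x$ alone need not realize an arbitrary pair $\pair{a}{x}$. I expect that the two bilattice operations together, combined with the Nagata lattice axioms governing residuals and the constant $\0$, will suffice to construct the required witness (mirroring the canonical decomposition $\pair{a_0}{x_0}$ inside $\algSmodM$ where both $\oplus$ and $\otimes$ are needed to isolate the first coordinate of one embedded factor and the second coordinate of the other); pinning down the exact term is the delicate step. Once surjectivity of $\unit$ is secured in both cases, the equivalence follows at once from the already-proved adjunction together with the isomorphism status of the counit.
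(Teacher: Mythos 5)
Your treatment of the restricted sesquilattice case coincides with the paper's: the same derivation $a \leq \gamma a = \gamma\sigma a = \gamma(\sigma a \cdot \0) = \gamma(a\cdot\0) \leq x$, the same witness $m \assign a \oplus x$, and the same verification $\sigma(a\oplus x)=\sigma(a\wedge x)=a$, $\gamma(a\oplus x)=\gamma(a\vee x)=x$; likewise your argument that $\unit$ preserves $\oplus$ (and $\otimes$) is the paper's argument. However, for the bilattice case you have only a statement of intent: you correctly observe that $a \oplus x$ alone cannot work and that both bilattice operations must be combined, but you never produce the witness, and you explicitly flag ``pinning down the exact term'' as unresolved. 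Since the theorem asserts the equivalence in both the bilattice and the sesquilattice settings, this is a genuine gap, not a routine detail: surjectivity of the unit is precisely the point where the added bilattice structure earns its keep, and without a concrete term the bilattice half of the statement is unproved.

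The missing ingredient is short. Given $a \in \algNsigma$ and $x \in \algNgamma$, put $m \assign a$ and $n \assign x$ and take
\begin{align*}
  w \assign (m \otimes (m \wedge n)) \vee (n \oplus (m \wedge n)).
\end{align*}
Using $\sigma(u \vee v) = \sigma u \vee \sigma v$, $\sigma(u \otimes v) = \sigma(u \vee v)$, $\sigma(u \oplus v) = \sigma(u \wedge v)$, isotonicity of $\sigma$, and $\sigma a = a$, one gets $\sigma w = a \vee \sigma(a \wedge x) = a$; using $\gamma(u \vee v) = \gamma(\gamma u \vee \gamma v)$, $\gamma(u \otimes v) = \gamma(u \wedge v)$, $\gamma(u \oplus v) = \gamma(u \vee v)$, $\gamma(u \wedge v) = \gamma u \wedge \gamma v$, and $\gamma x = x$, one gets $\gamma w = \gamma((\gamma a \wedge x) \vee x) = x$. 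This is exactly the term the paper exhibits, and it is the only point where your proposal and the paper's proof genuinely diverge: once $w$ is in hand, your reduction of the equivalence to the already-established adjunction plus surjectivity of the unit is the same route the paper takes.
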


\begin{proof}
  The four equations in the definition of Nagata bilattices ensure that the unit map preserves both $\oplus$ and $\otimes$:
\begin{align*}
  \pair{\sigma (m \oplus n)}{\gamma (m \oplus n)} = \pair{\sigma(m \wedge n)}{\gamma (m \vee n)} = \pair{\sigma m}{\gamma m} \oplus \pair{\sigma n}{\gamma n},
\end{align*}
  where the second equality holds because $\sigma(m \wedge n)$ is the meet of $\sigma m$ and $\sigma n$ in $\algN_{\sigma}$ and $\gamma (m \vee n)$ is the join of $\gamma m$ and $\gamma n$ in $\algN_{\gamma}$. The same argument shows that in the restricted case, the unit map preserves $\oplus$.

  To prove surjectivity in the case of Nagata bilattices, it suffices to observe that for each $a \in \algNsigma$ and $x \in \algNgamma$ we have $\pair{a}{x} = (m \otimes (m \wedge n)) \vee (n \oplus (m \wedge n))$ for $m \assign \embedS(a)$ and $n \assign \embedM(a)$. To prove surjectivity in the case of restricted Nagata sesquilattices, it suffices to observe that for each $a \in \algNsigma$ and $x \in \algNgamma$ with $a \ast \0 \leq x$ in the structural bimodule, i.e.\ with $\gamma(a \cdot \0) \leq x$ in $\algN$, we have $\pair{a}{x} = \pair{\sigma m}{\gamma m}$ for $m \assign a \oplus x$. This is because $\gamma a = \gamma \sigma a = \gamma (\sigma a \cdot \0) = \gamma(a \cdot \0)$, so $a \leq \gamma a = \gamma(a \cdot \0) \leq x$, so $\sigma(a \oplus x) = \sigma (a \wedge x) = \sigma a = a$ and $\gamma(a \oplus x) = \gamma(a \vee x) = \gamma x = x$.
\end{proof}

\section{Bimodules arising from residuated pairs}

\newcommand{\cdotplus}{\cdot_{\scriptscriptstyle +}}
\newcommand{\cdotminus}{\cdot_{\scriptscriptstyle -}}
\newcommand{\bsminus}{\bs_{\scriptscriptstyle -}}
\newcommand{\sminus}{/_{\scriptscriptstyle -}}

  Let us now make good on our promise to the reader in the introduction and relate the Nagata product construction to twist products. The connection arises when we specialize to Nagata products of bimodules of a particular kind.

  The simplest case of interest arises when a residuated lattice $\algL$ acts on its order dual~$\alg{L}^{\dual}$ by division, i.e.\ by the biaction
\begin{align*}
  a \ast x & \assign x / a, & x \ast a & \assign a \bs x,
\end{align*}
  where $x \in \alg{L}^{\dual}$ and $a \in \algL$. The residuals of this biaction are
\begin{align*}
  x \bsresright y & = x / y, & a \bsresleft x & = x \cdot a,\\
  x \sresleft y & = y \bs x, & x \sresright a & = a \cdot x.
\end{align*}
  This was the biaction considered by Tsinakis \& Wille~\cite{tsinakis+wille06}.

  If $\alg{L}$ is further equipped with a constant $\0$ such that $a \bs \0 = \0 / a$ for each $a \in \alg{L}$, we obtain a cyclic pointed $\alg{L}$-bimodule. The restricted Nagata products of such bimodules were studied by Busaniche et al.~\cite{busaniche+galatos+marcos22}, who provide an analogue of the Nagata adjunction for them. We shall in fact study a more general construction inspired by the non-involutive twist products of Rivieccio \& Spinks~\cite{rivieccio+spinks19}, who consider a set-up consisting of two Heyting algebras linked by a Galois connection.

  Let $\alg{S}_{+}$ be a posemigroup, $\alg{S}_{-}$ be a residuated meet semilattice, and $\lambda\colon \alg{S}_{+} \to \alg{S}_{-}$ be a homomorphism of posemigroups. Each such triple then induces a biaction of $\alg{S}_{+}$ on the order dual $\alg{S}_{-}^{\dual}$ of $\alg{S}_{-}$ by division, which turns it into an $\alg{S}_{+}$-bimodule:
\begin{align*}
  a \ast x & \assign x / \lambda a, & x \ast a & \assign \lambda a \bs x.
\end{align*}
  Suppose further that $\lambda$ has a right adjoint $\rho\colon \alg{S}_{-} \to \alg{S}_{+}$. That is, $\rho$ is an isotone map such that $a \leq \rho \lambda a$ and $\lambda \rho x \leq x$ for each $a \in \alg{S}_{+}$ and $x \in \alg{S}_{-}$, or equivalently
\begin{align*}
  \lambda a \leq x & \iff a \leq \rho x.
\end{align*}
  The above biaction is then residuated, and the residuals are
\begin{align*}
  x \bsresright y & \assign \rho(x / y), & a \bsresleft x & \assign x \cdot \lambda a,\\
  x \sresleft y & \assign \rho(y \bs x), & x \sresright a & \assign \lambda a \cdot x.
\end{align*}
  If $\alg{S}_{+}$ and $\alg{S}_{-}$ are pomonoids and $\lambda$ preserves the unit, then this bimodule is unital. If $\rho$ is a homomorphism of semigroups and $\lambda \circ \rho = \idmap_{\alg{S}_{+}}$, then the multiplication operation $\cdotminus$ of $\alg{S}_{-}$ can be recovered from the multiplication operation $\cdotplus$ of $\alg{S}_{+}$:
\begin{align*}
  x \cdotminus y = \lambda(\rho x \cdotplus \rho y).
\end{align*}
  This leads us to introduce the following notion. We again consider two variants of twist products, a minimalist and a maximalist one.

\begin{definition}[Twistable pairs] \label{def: twist pair}
  A \emph{twistable pair of posemigroups} $\alg{S} \assign \langle \alg{S}_{+}, \alg{S}_{-}, \lambda, \rho \rangle$ consists of:
\begin{itemize}
\item a posemigroup $\alg{S}_{+}$,
\item a meet semilattice and residuated posemigroup $\alg{S}_{-}$, and
\item homomorphisms of posemigroups $\lambda\colon \alg{S}_{+} \to \alg{S}_{-}$ and $\rho\colon \alg{S}_{-} \to \alg{S}_{+}$
\end{itemize}
  such that $\lambda$ is left adjoint to $\rho$ and also left inverse to $\rho$, i.e.\
\begin{align*}
  & \idmap_{\alg{S}_{+}} \leq \rho \circ \lambda, & & \lambda \circ \rho = \idmap_{\alg{S}_{-}}.
\end{align*}
  In a \emph{twistable pair of residuated lattices} we moreover require that $\alg{S}_{+}$ and $\alg{S}_{-}$ be residuated lattices and $\rho$ be a homomorphism of pomonoids. A \emph{(cyclic) pointed} twistable pair is one equipped with a constant $0 \in \alg{S}_{-}$ (with $\lambda a \bs 0 = 0 / \lambda a$ for each $a \in \alg{S}_{+}$). In the bimodule \emph{induced by} a twistable pair, $\alg{S}_{+}$ acts on $\alg{S}_{-}^{\dual}$ as follows:
\begin{align*}
  a \ast x & \assign x / \lambda a, & x \ast a & \assign \lambda a \bs x.
\end{align*}
\end{definition}

  Each twistable pair is a two-sorted ordered algebra. Accordingly, a homomorphism of twistable pairs is a pair of homomorphisms of the positive and negative sorts which commute with $\lambda$ and $\rho$. That is, a homomorphism consists of a posemigroup homomorphism and a residuated posemigroup homomorphism. Note that in a twistable pair of residuated lattices we do not require that the maps $\lambda$ and $\rho$ themselves be homomorphisms of residuated lattices.

  Per the remarks preceding their definition, each (pointed) twistable pair of posemigroups induces a (pointed) residuated bimodule.

\begin{definition}[Twist products] \label{def: twist products}
  The \emph{(restricted) twist product} $\alg{S}^{\bowtie}$ ($\alg{S}^{\bowtie}_{0}$) of a (pointed) twistable pair of posemigroups $\alg{S}$ is the (restricted) Nagata product of the (pointed) residuated bimodule associated with $\alg{S}$ endowed with the additional unary operation
\begin{align*}
  \nagneg \pair{a}{x} \assign \pair{\rho x}{\lambda a}.
\end{align*}
\end{definition}

  More explicitly, the (restricted) Nagata product construction restricts to twistable pairs of posemigroups in the following way: the twist product of a twistable pair of posemigroups $\alg{S} \assign \langle \alg{S}_{+}, \alg{S}_{-}, \lambda, \rho \rangle$ consists of pairs $\pair{a}{x} \in S_{+} \times S_{-}$ with the order
\begin{align*}
  \pair{a}{x} \leq \pair{b}{y} & \iff a \leq b \text{ and } y \leq x
\end{align*}
  and with the multiplication
\begin{align*}
  \pair{a}{x} \circ \pair{b}{y} & \assign \pair{a \cdot b}{y / \lambda a \wedge \lambda b \bs x}.
\end{align*}
  The restricted twist product of a pointed twistable pair of semigroups is obtained by restricting to the following set, which is closed under the operation $\nagneg$:
\begin{align*}
   \set{\pair{a}{x} \in \alg{S}^{\bowtie}}{x \cdot \lambda a \leq \0 \text{ and } \lambda a \cdot x \leq \0}.
\end{align*}
  If $\1$ is a multiplicative unit of $\alg{S}_{+}$, then $\pair{\1}{\0}$ is a multiplicative unit on $\alg{S}^{\bowtie}_{0}$. The conucleus $\sigma$ and a $\sigma$-structural closure operator $\gamma$ on $\alg{S}^{\bowtie}_{0}$ are
\begin{align*}
  & \sigma \pair{a}{x} \assign \pair{a}{\lambda a \bs \0} = \pair{a}{\0 / \lambda a}, & & \gamma \pair{a}{x} \assign \pair{\rho(\0 / x)}{x} = \pair{\rho(x \bs \0)}{x},
\end{align*}
  which yields the following isomorphisms between $\alg{S}_{+}$ and the image of $\sigma$ and between $\alg{S}_{-}^{\dual}$ and the image of $\gamma$:
\begin{align*}
  & \embedSplus\colon a \mapsto \pair{a}{\lambda a \bs \0} = \pair{a}{\0 / \lambda a}, & & \embedSminus\colon x \mapsto \pair{\rho(\0 / x)}{x} = \pair{\rho(x \bs \0)}{x}.
\end{align*}

  The twist product of a twistable pair of residuated $\ell$-semigroups is then a residuated $\ell$-semigroup with the residuals
\begin{align*}
  \pair{a}{x} \bs \pair{b}{y} & \assign \pair{a \bs b \wedge \rho(x / y)}{y \cdot \lambda a}, \\
  \pair{a}{x} / \pair{b}{y} & \assign \pair{a / b \wedge \rho(x \bs y)}{\lambda b \cdot x},
\end{align*}
  and the lattice operations
\begin{align*}
  \pair{a}{x} \wedge \pair{b}{y} & \assign \pair{a \wedge b}{x \vee y}, \\
  \pair{a}{x} \vee \pair{b}{y} & \assign \pair{a \vee b}{x \wedge y}.
\end{align*}
  The restricted twist product of a pointed twistable pair of residuated lattices is a residuated lattice. It is worth observing that in the case of twistable pairs of residuated lattices, $\algS^{\bowtie}_{\0}$ satisfies the equations:
\begin{align*}
  (\nagneg \1) / x = \nagneg x = x \bs (\nagneg \1).
\end{align*}
  That is, the unary operation $\nagneg$ can be replaced by the constant $\nagneg \1$ in the signature of the restricted twist product.

\begin{definition}[Nagata posemigroups with strong negation]
  A \emph{(restricted) Nagata posemigroup with~strong negation} is a (restricted) Nagata posemigroup equipped with a unary operation $\nagneg$ called \emph{strong negation} such that the following equations hold:
\begin{align*}
  x & \leq \nagneg \nagneg x & \nagneg \nagneg \nagneg x = \nagneg x
\end{align*}
\begin{align*}
  \nagneg \sigma \nagneg x & = \gamma x & \gamma( \nagneg x \bsgamma \gamma y) & = \gamma (\gamma x \sgamma \nagneg y) \\
  \nagneg \gamma \nagneg x & = \nagneg \nagneg \sigma x & \gamma( \nagneg \nagneg y \bsgamma \gamma \nagneg x) & = \gamma( \nagneg (x \cdot y)) \\
  \nagneg \nagneg \sigma x & = \sigma \nagneg \nagneg x & (\sigma \nagneg x) \cdot (\sigma \nagneg y) & = \sigma \nagneg (\nagneg y \bsgamma \gamma x)
\end{align*}
  A \emph{(restricted) Nagata lattice with strong negation} is both a (restricted) Nagata lattice and a (restricted) Nagata posemigroup with strong negation. A strong negation is \emph{involutive} if $\nagneg \nagneg x = x$ holds.
\end{definition}

\begin{fact} \label{fact: twist is nagata}
  The (restricted) twist product of each (pointed) twistable pair of posemigroups is a (restricted) Nagata posemigroup with strong negation. The (restricted) twist product of each (pointed) twistable pair of residuated lattices is a (restricted) Nagata residuated lattice with strong negation. It moreover satisfies:
\begin{align*}
  \gamma( \nagneg x \bs y) & = \gamma (x / \nagneg y) \\
  \gamma( \nagneg \nagneg y \bs \nagneg x) & = \gamma( \nagneg (x \cdot y)) \\
  (\sigma \nagneg x) \cdot (\sigma \nagneg y) & = \sigma \nagneg (\nagneg y \bs x)
\end{align*}
\end{fact}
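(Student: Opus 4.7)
The plan is to split the verification into two clean pieces: inherit the (restricted) Nagata posemigroup / lattice structure for free from the preceding results, and then verify the extra equations governing $\nagneg$ by direct componentwise computation. The first reduction is immediate: by Definition~\ref{def: twist products}, the (restricted) twist product of a (pointed) twistable pair of posemigroups is by construction the (restricted) Nagata product of the induced cyclic pointed residuated bimodule, equipped additionally with the operation $\nagneg \pair{a}{x} = \pair{\rho x}{\lambda a}$. Hence Fact~\ref{fact: nagata posemigroups} hands us the Nagata posemigroup reduct, and its residuated-lattice counterpart (proved just before Theorem~\ref{thm: nagata rls}) hands us the Nagata residuated lattice reduct in the lattice case.

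For the equations involving $\nagneg$, three axioms of a twistable pair do all the work: $\lambda \circ \rho = \idmap_{\alg{S}_{-}}$, $\idmap_{\alg{S}_{+}} \leq \rho \circ \lambda$, and, in the pointed case, the cyclicity $\lambda a \bs \0 = \0 / \lambda a$. The first two immediately give $\nagneg \nagneg \pair{a}{x} = \pair{\rho \lambda a}{\lambda \rho x} = \pair{\rho \lambda a}{x} \geq \pair{a}{x}$ and $\nagneg \nagneg \nagneg \pair{a}{x} = \pair{\rho x}{\lambda \rho \lambda a} = \pair{\rho x}{\lambda a} = \nagneg \pair{a}{x}$, settling the first two equations. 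Cyclicity then yields $\nagneg \sigma \nagneg \pair{a}{x} = \nagneg \pair{\rho x}{x \bs \0} = \pair{\rho(x \bs \0)}{x} = \gamma \pair{a}{x}$, and the two paired equations $\nagneg \gamma \nagneg = \nagneg \nagneg \sigma$ and $\nagneg \nagneg \sigma = \sigma \nagneg \nagneg$ follow by the same symmetric unfolding together with $\lambda \rho = \idmap$.

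The three equations mixing $\nagneg$ with $\cdot$, $\bsgamma$, or $\sgamma$ are handled analogously: one expands both sides using the explicit formulas for $\circ$, $\bsgamma$, $\sigma$, and $\gamma$ on pairs, applies $\lambda$ and $\rho$ where demanded by $\nagneg$, and uses $\lambda \rho = \idmap_{\alg{S}_{-}}$ to collapse nested applications, checking that both sides agree componentwise. The additional residuated-lattice-specific equations at the end follow in the same way, replacing $\bsgamma$ by $\bs$ and using the stated residual $\pair{a}{x} \bs \pair{b}{y} = \pair{a \bs b \wedge \rho(x/y)}{y \cdot \lambda a}$ together with its mirror image.

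The main obstacle, such as it is, is purely combinatorial bookkeeping: each side of each equation unpacks into a pair whose first component is typically a meet of several $\rho$-images in $\alg{S}_{+}$ and whose second is a product or division in $\alg{S}_{-}$, and one must keep careful track of where each $\lambda$ and $\rho$ sits. No conceptual subtlety is anticipated, since the cyclicity condition and $\lambda \rho = \idmap_{\alg{S}_{-}}$ are tailored precisely to make these verifications collapse in the right way.
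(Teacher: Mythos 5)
Your proposal is correct and follows essentially the same route as the paper: reduce to the earlier facts that the (restricted) twist product is the (restricted) Nagata product of the induced cyclic pointed residuated ($\ell$-)bimodule, hence a (restricted) Nagata posemigroup resp.\ lattice, and then verify the $\nagneg$-equations by direct computation on pairs from $\idmap_{\alg{S}_{+}} \leq \rho \circ \lambda$, $\lambda \circ \rho = \idmap_{\alg{S}_{-}}$, and cyclicity. The only difference is one of economy: for the equations involving residuals the paper compares a single component, since both sides lie under $\gamma$ (or in the image of $\sigma$) and such elements are determined by their second (resp.\ first) component, whereas you propose expanding both components in full --- equivalent, just slightly more bookkeeping.
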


\begin{proof}
  The (restricted) twist product is a particular case of a (restricted) Nagata product, so it suffices to verify that the equations displayed above hold in every (restricted) twist product.

  The (in)equalities which do not feature residuation are easy consequences of the conditions $\idmap_{\alg{S}_{+}} \leq \rho \circ \lambda$ and $\lambda \circ \rho = \idmap_{\alg{S}_{-}}$. The equality $\gamma(\nagneg m \bs n) = \gamma(n / \nagneg m)$ holds because for $m \assign \pair{a}{x}$ and $n \assign \pair{b}{y}$ the second component of both sides of this equality is $y \cdotplus x$. The equality $\gamma(\nagneg \nagneg n \bs \nagneg m) = \gamma \nagneg (m \cdot n)$ holds because for $m \assign \pair{a}{x}$ and $n \assign \pair{b}{y}$ the second component of both sides of this equality are $\lambda a \cdotminus \lambda b = \lambda (a \cdotplus b)$. The equality $(\sigma \nagneg x) \cdot (\sigma \nagneg y) = \sigma \nagneg (\nagneg y \bs x)$ holds because both sides of the equality are in the image of $\sigma$ and for $m \assign \pair{a}{x}$ and $n \assign \pair{b}{y}$ their first component is $\rho x \cdot \rho y = \rho(x \cdot y)$.
\end{proof}

 We already know that every cyclic pointed residuated bimodule can be recovered from as the structural bimodule of its restricted Nagata product. However, we now need to do more, namely to reconstruct the quadruple $\alg{S}$ itself. That is, in addition to recovering the pointed bimodule structure, we need to also reconstruct multiplication and division in $\alg{S}_{-}$ and the maps $\lambda$ and $\rho$. The maps $\lambda$ and $\rho$ are recovered as follows:
\begin{align*}
  \lambda a & \assign \gamma (\nagneg a), & \rho x & = \sigma (\nagneg x).
\end{align*}
  Multiplication and division in $\alg{S}_{-}$ are then recovered as:
\begin{align*}
  x \cdotminus y & \assign \gamma( \nagneg y \bs x), &  x \bsminus y & \assign \gamma(  y \cdot \nagneg x ), & x \sminus y & \assign \gamma( \nagneg y \cdot x).
\end{align*}
  This yields the \emph{untwist functor}, which to each Nagata posemigroup with strong negation $\alg{N}$ assigns the quadruple
\begin{align*}
  \langle \alg{N}_{\sigma}, \alg{N}_{\gamma}^{\dual}, \gamma \circ \nagneg, \sigma \circ \nagneg \rangle,
\end{align*}
  and to each homomorphism $h$ of Nagata posemigroups with strong negation with domain $\alg{N}$ assigns its restriction to $\alg{N}_{\sigma}$ and to $\alg{N}_{\gamma}$.

\begin{fact} \label{fact: untwist}
  The untwisting functor applied to a cyclic pointed Nagata posemigroup with strong negation yields a cyclic pointed twistable pair of posemigroups.
\end{fact}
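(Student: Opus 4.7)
My strategy would be to work through the definition of cyclic pointed twistable pair of posemigroups clause by clause, for the quadruple $\langle \algNsigma, \algNgamma^{\dual}, \gamma \circ \nagneg, \sigma \circ \nagneg \rangle$ together with the constant $\0 \in \algNgamma$, invoking the eight equational axioms of a Nagata posemigroup with strong negation where needed.

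First I would identify the underlying structures. The posemigroup structure on $\algNsigma$ comes for free, since $\sigma$ is a pre-conucleus. The $\gamma$-join semilattice assumption equips $\algNgamma$ with joins, which become meets in $\algNgamma^{\dual}$. I would then install a residuated posemigroup structure on $\algNgamma^{\dual}$ via the operations $x \cdotminus y := \gamma(\nagneg y \bs x)$, $x \bsminus y := \gamma(y \cdot \nagneg x)$, $x \sminus y := \gamma(\nagneg y \cdot x)$. Verifying that this yields a residuated posemigroup, namely associativity of $\cdotminus$, isotonicity in the dual order, and the two residuation equivalences, is the bulk of the work, and is driven by the symmetry axiom $\gamma(\nagneg x \bsgamma \gamma y) = \gamma(\gamma x \sgamma \nagneg y)$ together with general manipulations of $\gamma$-residuals in the ambient Nagata posemigroup.

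Next I would check that $\lambda := \gamma \circ \nagneg$ and $\rho := \sigma \circ \nagneg$ are posemigroup homomorphisms of the correct type. For $\lambda$ the critical computation is
\begin{align*}
\lambda a \cdotminus \lambda b = \gamma(\nagneg \gamma \nagneg b \bs \gamma \nagneg a) = \gamma(\nagneg \nagneg b \bs \gamma \nagneg a) = \gamma \nagneg(a \cdot b) = \lambda(a \cdot b),
\end{align*}
which uses $\nagneg \gamma \nagneg x = \nagneg \nagneg \sigma x$ (with $\sigma b = b$), followed by $\gamma(\nagneg \nagneg y \bsgamma \gamma \nagneg x) = \gamma \nagneg(x \cdot y)$. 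The dual computation for $\rho$ is underwritten by $(\sigma \nagneg x)(\sigma \nagneg y) = \sigma \nagneg(\nagneg y \bsgamma \gamma x)$. For isotonicity one observes that $\sigma$, $\gamma$, and $\nagneg$ are all isotone on $\algN$, so $\lambda$ is isotone from $\algNsigma$ to $\algNgamma$, which becomes isotonicity from $\algNsigma$ into $\algNgamma^{\dual}$ after unravelling the dualization; an analogous remark applies to $\rho$.

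For the adjunction $\lambda \dashv \rho$ and the left-inverse equation $\lambda \circ \rho = \idmap$, I would compute directly:
\begin{align*}
\lambda \rho x & = \gamma \nagneg \sigma \nagneg x = \gamma \gamma x = x \quad (x \in \algNgamma), \\
\rho \lambda a & = \sigma \nagneg \gamma \nagneg a = \sigma \nagneg \nagneg \sigma a = \sigma \nagneg \nagneg a \geq \sigma a = a \quad (a \in \algNsigma),
\end{align*}
via $\nagneg \sigma \nagneg x = \gamma x$ and, respectively, $\nagneg \gamma \nagneg x = \nagneg \nagneg \sigma x$ combined with $x \leq \nagneg \nagneg x$ and the isotonicity and idempotency of $\sigma$. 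Finally, the cyclicity condition $\lambda a \bsminus \0 = \0 \sminus \lambda a$, after expanding the definitions and applying $\nagneg \gamma \nagneg a = \nagneg \nagneg a$, reduces to $\gamma(\0 \cdot \nagneg \nagneg a) = \gamma(\nagneg \nagneg a \cdot \0)$, which is an instance of the assumed cyclicity of $\0$ in $\algN$ applied to the element $\nagneg \nagneg a \in \algNsigma$ (using $\nagneg \nagneg \sigma x = \sigma \nagneg \nagneg x$). The principal obstacle lies in the first step: verifying that $\algNgamma^{\dual}$ actually carries a residuated posemigroup structure, where one must track precisely how $\nagneg$ conjugates the original multiplication and residuals into those of $\algNgamma^{\dual}$, and invoke the strong-negation axioms in just the right order.
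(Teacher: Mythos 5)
Your overall plan is right and several of your computations match the paper's (the homomorphism property of $\lambda$ and $\rho$, the adjunction $\lambda \dashv \rho$ with $\lambda \circ \rho = \idmap$, and the cyclicity of $\0$ are essentially the same). The main issue is that the hardest parts are identified but not actually carried out. You say that associativity of $\cdotminus$ and the residuation equivalences for $\bsminus$ and $\sminus$ are ``the bulk of the work,'' ``driven by the symmetry axiom together with general manipulations of $\gamma$-residuals,'' but you do not exhibit a proof. As stated, this is a gap: it is not at all obvious how the single symmetry axiom produces associativity directly, and the residuation verification is genuinely delicate. The paper handles both by different means. Associativity is not proved from the symmetry axiom at all; instead, it is \emph{transferred} from $\algNsigma$ through the already-established identities $\lambda \circ \rho = \idmap$ and the homomorphism property of $\lambda$ and $\rho$: one writes $(x \cdotminus y) \cdotminus z = \lambda\rho\bigl((x \cdotminus y) \cdotminus z\bigr) = \lambda\bigl((\rho x \cdotplus \rho y) \cdotplus \rho z\bigr)$ and reassociates in $\algNsigma$. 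The residuation argument requires a careful chain: in one direction one applies the symmetry axiom $\gamma(\nagneg x \bsgamma \gamma y) = \gamma(\gamma x \sgamma \nagneg y)$ after first passing to the unclosed residual; in the other direction one must first observe that $\sigma(\nagneg x) = \nagneg x$ for $x \in \algNgamma$ and then invoke the Nagata axiom $\gamma(x / y) = \gamma x / \sigma y$ to conclude $y / \nagneg x \in \algNgamma$, after which structurality finishes the job. None of this is visible from the sketch.

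A second, more minor point: the paper's proof opens by invoking the previously established result that the structural bimodule of a Nagata posemigroup is a cyclic pointed residuated bimodule, and then only checks the genuinely new twistable-pair clauses. Your proposal reverifies everything from scratch. That is not wrong, but it duplicates work the paper has already done, and it means you are reproving (rather than citing) that $\algNsigma$ is a posemigroup and that $\0$ is cyclic. Since the cyclicity axioms $\gamma(\0 \cdot \sigma x) = \gamma \sigma x = \gamma(\sigma x \cdot \0)$ already give cyclicity of the structural bimodule, your reduction to $\gamma(\0 \cdot \nagneg\nagneg a) = \gamma(\nagneg\nagneg a \cdot \0)$ is correct but unnecessary once that earlier fact is cited. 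The essential missing content remains the explicit verification of the residuated posemigroup structure on $\algNgamma^{\dual}$; until that is written out, the proof is incomplete.
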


\begin{proof}
  We already know that it yields a cyclic pointed residuated bimodule. It therefore remains to show that $\alg{S}_{-}^{\dual}$ is a residuated posemigroup, $\lambda$ and $\rho$ are homomorphisms of posemigroups, $\idmap_{\alg{S}_{+}} \leq \rho \circ \lambda$, and $\lambda \circ \rho = \idmap_{\alg{S}_{-}}$.

  The map $\lambda$ is isotone, since $\gamma$ is isotone and $\nagneg$ is antitone. It preserves multiplication, since $\lambda a \cdotminus \lambda b = \gamma(\nagneg \gamma \nagneg b \bs \gamma \nagneg a) = \gamma (\sigma (\nagneg \nagneg b) \bs \gamma \nagneg a) = \gamma \gamma (\nagneg \nagneg b \bs \nagneg a) = \gamma \nagneg (a \cdot b) = \lambda (a \cdotplus b)$, using the equality $\sigma x \bs \gamma y = \gamma (x \bs y)$. Similarly, $\rho$ is isotone, since $\sigma$ is isotone and $\nagneg$ is antitone. It preserves multiplication, since $\rho (x \cdotminus y) = \sigma \nagneg \gamma (\nagneg y \bs x) = \sigma \nagneg \nagneg \sigma \nagneg (\nagneg y \bs x) = \sigma \nagneg (\nagneg y \bs x) = \sigma \nagneg x \cdot \sigma \nagneg y = \rho x \cdotplus \rho x$.

  If $x = \gamma x$, we have $\lambda(\rho(x)) = \gamma \nagneg \sigma \nagneg x = \nagneg \sigma \nagneg \nagneg \sigma \nagneg x = \nagneg \sigma \nagneg x = \gamma x = x$. If $a = \sigma a$, we have $\rho(\lambda(a)) = \sigma \nagneg \gamma \nagneg a = \sigma \nagneg \nagneg \sigma \nagneg \nagneg a = \nagneg \nagneg \sigma a = \nagneg \nagneg a \geq a$.

  The operation $\cdotminus$ is associative, since the operation $\cdotplus$ is associative, being the restriction of $\cdot$ to $\alg{N}_{\sigma}$, and $(x \cdotminus y) \cdotminus z = \lambda \rho ((x \cdotminus y) \cdotminus z) = \lambda ((\rho x \cdotplus \rho y) \cdotplus \rho z = \lambda (\rho x \cdotplus (\rho y \cdotplus \rho z)) = \lambda \rho x \cdotminus (\lambda \rho y \cdotminus \lambda \rho z) = x \cdotminus (y \cdotminus z)$.

  Finally, we show that $\bsminus$ is a residual of $\cdotplus$. The proof for $\sminus$ is analogous. The inequality $x \cdotminus y \leq z$ in $\alg{N}_{\gamma}^{\dual}$ means that $z \leq \gamma (\nagneg y \bs x)$ in $\alg{N}$ for $x, y, z \in \alg{N}_{\gamma}$, while the inequality $y \leq x \bsminus z$ in $\alg{N}_{\gamma}^{\dual}$ means that $\gamma(z \cdot \nagneg x) \leq y$ in $\alg{N}$ for $x, y, z \in \alg{N}_{\gamma}$. But this last inequality implies that $z \cdot \nagneg x \leq y$, so $z \leq y / \nagneg x \leq \gamma(y / \nagneg x) = \gamma(\nagneg y \bs x)$. Conversely, we show that $z \leq \gamma(\nagneg y \bs x)$ implies $\gamma(z \cdot \nagneg ) \leq y$. To this end, let us first observe that for $x, y \in \alg{N}_{\gamma}$ we have $\sigma (\nagneg x) = \nagneg \gamma x = \nagneg x$, so the equality $\gamma x / \sigma y = \gamma(x / y)$ which holds in Nagata posemigroups yields that $y / \nagneg x = \gamma y / \sigma (\nagneg x) = \gamma (y / \sigma (\nagneg x)) = \gamma (y / \nagneg x)$. Thus $z \leq \gamma(\nagneg y \bs x)$ implies that $\gamma(z \cdot \nagneg x) \leq \gamma (\gamma(\nagneg y \bs x) \cdot \nagneg x) = \gamma (\gamma(y / \nagneg x) \cdot \nagneg x) = \gamma( (y / \nagneg x) \cdot \nagneg x) \leq \gamma y = y$.
\end{proof}

\begin{theorem}[Adjunction for Nagata posemigroups with~$\nagneg$] \label{thm: nagata posemigroup with negation adjunction}
  The restricted twist product functor from the \mbox{category} of twistable pairs of posemigroups to the \mbox{category} of Nagata posemigroups with strong negation is right adjoint to the untwist functor. The unit of the adjunction is the map ${m \mapsto \pair{\sigma m}{\gamma m}}$, the counit is the inverse of the iso\-morphism $\pair{\embedSplus}{\embedSminus}$.
\end{theorem}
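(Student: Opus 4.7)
The plan is to reduce the statement to the adjunction for Nagata posemigroups already established in Theorem~\ref{thm: nagata structures}, by verifying that the extra operation $\nagneg$ on one side corresponds to the maps $\lambda$ and $\rho$ on the other.

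First I would confirm that both functors are well-defined. Fact~\ref{fact: twist is nagata} shows that the restricted twist product of a cyclic pointed twistable pair is a restricted Nagata posemigroup with strong negation, and Fact~\ref{fact: untwist} shows that the untwist functor produces a cyclic pointed twistable pair. Functoriality on morphisms is routine in both directions: a homomorphism of twistable pairs is a pair of posemigroup homomorphisms commuting with $\lambda$ and $\rho$, which induces a homomorphism of restricted twist products preserving $\nagneg$ componentwise; conversely, a homomorphism of Nagata posemigroups with strong negation preserves $\sigma$, $\gamma$, and $\nagneg$, so it restricts on the images of $\sigma$ and $\gamma$ to a homomorphism of the associated twistable pair.

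Next I would show that the unit $\unit\colon m \mapsto \pair{\sigma m}{\gamma m}$ is a homomorphism of Nagata posemigroups with strong negation. By Theorem~\ref{thm: nagata structures} it is already an injective homomorphism of the underlying Nagata posemigroup structure, so the only new equation to check is $\unit(\nagneg m) = \nagneg \unit(m)$, i.e.\ $\pair{\sigma \nagneg m}{\gamma \nagneg m} = \nagneg \pair{\sigma m}{\gamma m} = \pair{\sigma \nagneg \gamma m}{\gamma \nagneg \sigma m}$, where I have unfolded $\rho = \sigma \circ \nagneg$ and $\lambda = \gamma \circ \nagneg$ in the target. From the axiom $\nagneg \sigma \nagneg x = \gamma x$ combined with $\nagneg \nagneg \sigma x = \sigma \nagneg \nagneg x$ and $\nagneg \nagneg \nagneg x = \nagneg x$, a brief calculation yields the dualities $\sigma \nagneg x = \nagneg \gamma x$ and $\gamma \nagneg x = \nagneg \sigma x$. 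Combined with the idempotence of $\sigma$ and $\gamma$, these dualities immediately deliver the required equalities.

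For the counit, Theorem~\ref{thm: counit is iso} applied to the cyclic pointed residuated bimodule induced by a twistable pair $\alg{S}$ yields an isomorphism $\pair{\embedSplus}{\embedSminus}$ between this bimodule and the structural bimodule of the restricted twist product. It only remains to check that this isomorphism also intertwines $\lambda, \rho$ of $\alg{S}$ with the maps $\gamma \circ \nagneg, \sigma \circ \nagneg$ obtained by untwisting $\alg{S}^{\bowtie}_{\0}$, which is a direct computation using $\nagneg \pair{a}{x} = \pair{\rho x}{\lambda a}$ together with the definitions of $\embedSplus$ and $\embedSminus$. The triangle equalities and naturality then inherit from the corresponding properties of the Nagata adjunction. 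The main obstacle is the modest equational bookkeeping required to derive the dualities $\sigma \nagneg = \nagneg \gamma$ and $\gamma \nagneg = \nagneg \sigma$ from the axioms defining Nagata posemigroups with strong negation; once these are in hand, everything reduces mechanically to the plain Nagata adjunction.
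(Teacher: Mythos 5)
Your overall strategy matches the paper's: invoke Theorem~\ref{thm: nagata structures} together with Facts~\ref{fact: twist is nagata} and~\ref{fact: untwist}, then check the unit on $\nagneg$ and the counit on the extra twistable-pair structure. Your treatment of the unit is correct and is essentially the paper's computation: the dualities $\sigma \nagneg x = \nagneg \gamma x$ and $\gamma \nagneg x = \nagneg \sigma x$ do follow from $\nagneg \sigma \nagneg x = \gamma x$, $\nagneg\nagneg\sigma x = \sigma\nagneg\nagneg x$ and $\nagneg\nagneg\nagneg x = \nagneg x$, and together with idempotence of $\sigma,\gamma$ they give $\pair{\sigma \nagneg m}{\gamma \nagneg m} = \pair{\sigma\nagneg\gamma m}{\gamma\nagneg\sigma m} = \nagneg\pair{\sigma m}{\gamma m}$.

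The gap is in the counit. You assert that ``it only remains to check that this isomorphism also intertwines $\lambda,\rho$,'' but a morphism in the category of twistable pairs is, by definition, a pair consisting of a posemigroup homomorphism \emph{and a residuated posemigroup homomorphism} commuting with $\lambda$ and $\rho$. Hence the negative-sort component of the counit must also preserve $\cdotminus$, $\bsminus$ and $\sminus$, and these operations are not part of the bimodule signature, so their preservation does \emph{not} follow from Theorem~\ref{thm: counit is iso} or from the plain Nagata adjunction; this is precisely where the paper's proof does most of its work (the explicit computations showing $\embedSminus(x \cdotminus y) = \embedSminus(x)\cdotminus\embedSminus(y)$ and $\embedSminus(x \bsminus y) = \embedSminus(x)\bsminus\embedSminus(y)$ inside $\alg{S}^{\bowtie}_{\0}$, the latter using the inequality $x \bsminus y \leq (x \bs \0)/(y \bs \0)$). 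One could try to shortcut part of this: once $\lambda$, $\rho$ and $\cdotplus$ are preserved, $\cdotminus$ follows from $x \cdotminus y = \lambda(\rho x \cdotplus \rho y)$; and $\bsminus,\sminus$ can in principle be written as $\lambda$ applied to the biaction residuals. But that last step requires identifying the biaction residuals of the untwisted pair (defined via $\gamma\nagneg$, $\sigma\nagneg$ and $\bsgamma$, $\sgamma$) with the structural-bimodule residuals that the counit is known to preserve, which is itself a verification of comparable length to the paper's direct computation. As written, your proposal leaves the preservation of the negative-sort multiplication and its residuals unaddressed.
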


\begin{proof}
  Given Theorem~\ref{thm: nagata structures} and Facts~\ref{fact: twist is nagata} and~\ref{fact: untwist}, what remains to be verified is that the unit map preserves strong negation and the counit map preserves $\lambda$, $\rho$, $\cdotminus$, $\bsminus$, and $\sminus$. The first claim means that $\pair{\rho \gamma m}{\lambda \sigma m} = \pair{\sigma \nagneg m}{\gamma \nagneg m}$, and indeed $\rho \gamma m = \sigma \nagneg \gamma m$ and $\lambda \sigma m = \gamma \nagneg \sigma m = \nagneg \sigma \nagneg \nagneg  \sigma m = \nagneg \sigma \sigma \nagneg \nagneg m = \nagneg \sigma m = \gamma \nagneg m$.

  The counit map preserves $\lambda$ because $\embedSminus(\lambda a) = \pair{\rho (\lambda a \bs 0)}{\lambda a} = \gamma \pair{\rho(\lambda a \bs 0)}{\lambda a} = \gamma \nagneg \pair{a}{\lambda a \bs 0} = \lambda \embedSplus (a)$. The counit map preserves $\rho$ because $\embedSplus(\rho x) = \pair{\rho x}{\lambda \rho x \bs 0} = \pair{\rho x}{x \bs 0} = \sigma \pair{\rho x}{x \bs 0} = \sigma \nagneg \pair{\rho (x \bs 0)}{x} = \rho \embedSminus(x)$.

  The counit map preserves $\cdotminus$ because
\begin{align*}
  \embedSminus(x \cdotminus y) & = \pair{\rho (x \cdotminus y \bsminus 0)}{x \cdotminus y} 
  = \gamma \pair{\ldots}{x \cdot y} \\
  & = \gamma \pair{\ldots}{x \cdotminus \lambda \rho y}
  =  \gamma(\pair{\rho y}{y \bsminus 0} \bs \pair{\rho(x \bsminus 0)}{x}) \\
  & = \gamma(\nagneg \pair{\rho(y \bsminus 0)}{y} \bs \pair{\rho(x \bsminus 0)}{x})
  = \pair{\rho(x \bsminus 0)}{x} \cdotminus \pair{\rho(y \bsminus 0)}{y} \\
  & = \embedSminus(x) \cdotminus \embedSminus(y),
\end{align*}
  where the ellipsis $\dots$ indicates that the value is irrelevant for the computation.

  The counit map preserves $\bsminus$ because
\begin{align*}
  \embedSminus(x \bsminus y) & = \pair{\rho((x \bsminus y) \bsminus 0)}{x \bsminus y}
  = \gamma \pair{\ldots}{x \bsminus y} \\
  & = \gamma \pair{\ldots}{(x \bs 0) / (y \bs 0) \wedge (x \bsminus y)}
  = \gamma \pair{\ldots}{(x \bs 0) / \lambda \rho (y \bs 0) \wedge (x \bsminus y)} \\
  & = \gamma (\pair{\rho(y \bsminus 0)}{y} \cdot \nagneg \pair{\rho(x \bsminus 0)}{x})
  = \pair{\rho(x \bsminus 0)}{x} \bsminus \pair{\rho(y \bsminus 0)}{y} \\
  & = \embedSminus(x) \bsminus \embedSminus(y),
\end{align*}
  where we use the inequality $x \bsminus y \leq (x \bs 0) / (y \bs 0)$. The proof for $\sminus$ is analogous.
\end{proof}

\begin{theorem}[Adjunction for Nagata lattices with~$\nagneg$] \label{thm: nagata rl with negation adjunction}
  The restricted twist product functor from the \mbox{category} of twistable pairs of residuated lattices to the \mbox{category} of Nagata lattices with strong negation is right adjoint to the untwist functor. The unit of the adjunction is the map ${m \mapsto \pair{\sigma m}{\gamma m}}$, the counit is the inverse of the iso\-morphism $\pair{\embedS}{\embedM}$.
\end{theorem}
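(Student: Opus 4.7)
The plan is to bootstrap from the three preceding adjunctions: Theorem~\ref{thm: nagata rls} (which handles the residuated lattice structure), Theorem~\ref{thm: nagata posemigroup with negation adjunction} (which handles strong negation at the posemigroup level), and Facts~\ref{fact: twist is nagata}, \ref{fact: untwist}. The structure of the argument is essentially a ``tensor'' of these two extensions of the bare Nagata adjunction: the residuated lattice equations and the strong negation equations govern disjoint aspects of the data, so once each side has been checked in isolation, the combined statement follows by verification of a short list of compatibilities.

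First I would verify that the two functors restrict correctly to the new categories. For the restricted twist product functor, Fact~\ref{fact: twist is nagata} already gives that the twist product of a twistable pair of residuated lattices is a Nagata residuated lattice with strong negation, so nothing new is needed here. For the untwist functor, Fact~\ref{fact: untwist} provides a cyclic pointed twistable pair of posemigroups; what must be added is that both $\alg{N}_{\sigma}$ and $\alg{N}_{\gamma}^{\dual}$ inherit the residuated lattice structure, which follows from the Nagata lattice axioms exactly as in the proof of Theorem~\ref{thm: nagata rls}, and that $\rho = \sigma \circ \nagneg$ preserves the unit. The latter uses the equations $\nagneg \nagneg \sigma x = \sigma \nagneg \nagneg x$ together with $\lambda \circ \rho = \idmap_{\alg{S}_{-}}$ (already established in Fact~\ref{fact: untwist}) to identify $\rho(\1_{\alg{N}_{\gamma}^{\dual}})$ with the unit $\1 \in \alg{N}_{\sigma}$.

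Next, I would check the unit and counit. The unit $\unit\colon m \mapsto \pair{\sigma m}{\gamma m}$ is already known to be a homomorphism of Nagata residuated lattices by Theorem~\ref{thm: nagata rls} and to preserve strong negation by Theorem~\ref{thm: nagata posemigroup with negation adjunction}. Since these two theorems together cover every operation in the signature of Nagata lattices with strong negation, no further calculation is needed for $\unit$. For the counit $\pair{\embedSplus}{\embedSminus}^{-1}$, preservation of the bimodule/posemigroup structure and of $\lambda$, $\rho$ and the multiplication $\cdotminus$, $\bsminus$, $\sminus$ of $\alg{S}_{-}$ is established in the proof of Theorem~\ref{thm: nagata posemigroup with negation adjunction}; what remains is preservation of the lattice operations and the residuals of $\alg{S}_{-}$, but since the counit restricts to the canonical isomorphisms $\embedSplus\colon \alg{S}_{+} \to \alg{N}_{\sigma}$ and $\embedSminus\colon \alg{S}_{-}^{\dual} \to \alg{N}_{\gamma}^{\dual}$, and lattice operations and residuals in $\alg{S}_{-}$ are (by definition of the untwist functor) inherited from the computations in $\alg{N}_{\gamma}$, this reduces to the order-theoretic fact that $\embedSminus$ is an order isomorphism.

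The triangle identities can then be copied verbatim from the proof of Theorem~\ref{thm: nagata posemigroup with negation adjunction}, since the extra operations being preserved do not change the underlying pointwise computations $\pair{a}{x} \mapsto \pair{\pair{a}{\ldots}}{\pair{\ldots}{x}} \mapsto \pair{a}{x}$ and the analogous sequences for the sorts $\alg{S}_{+}$, $\alg{S}_{-}$. The main obstacle, if there is one, will be a bookkeeping subtlety in the residuals of $\alg{S}_{-}$: one needs to know that $\bsminus$ and $\sminus$ on the untwisted side really coincide with the restrictions of the residuals of $\alg{N}_{\gamma}^{\dual}$ that the Nagata lattice structure provides (as opposed to some other, apparently different, definition obtained via $\gamma(\nagneg y \cdot x)$ etc.). This is settled by the computation in the last paragraph of the proof of Fact~\ref{fact: untwist}, which shows that the formula $x \bsminus y = \gamma(y \cdot \nagneg x)$ actually gives the residual of $\cdotminus$, together with the observation that residuated structure in $\alg{N}_{\gamma}^{\dual}$ is uniquely determined by $\cdotminus$. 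Once this is in place, the adjunction follows directly.
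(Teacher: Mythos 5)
Your proposal is correct and takes essentially the same route as the paper, whose proof is just a one-line combination of the earlier Nagata adjunction with Theorem~\ref{thm: nagata posemigroup with negation adjunction} (together with Facts~\ref{fact: twist is nagata} and~\ref{fact: untwist}). The compatibility checks you spell out --- the residuated lattice structure on the untwisted pair, unitality of $\rho$, and preservation of lattice operations and residuals by the unit and counit via the order-isomorphism argument --- are precisely the details the paper leaves implicit in that citation.
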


\begin{proof}
  This follows from Theorem~\ref{thm: nagata structures} and Theorem~\ref{thm: nagata posemigroup with negation adjunction}.
\end{proof}

\begin{definition}[Nagata bilattices and sesquilattices with strong negation] \label{thm: twist 2}
  A \emph{Nagata bilattice with strong negation} is both a Nagata bilattice and a Nagata lattice with strong negation. A \emph{restricted Nagata sesquilattice with strong negation} is both a restricted Nagata sesquilattice and a Nagata lattice with strong negation.
\end{definition}

  The twist product functor can be expanded to yield a Nagata bilattice as in Section~\ref{sec: bilattices}, and likewise the restricted twist product functor can be expanded to yield a restricted Nagata sesquilattice.

\begin{theorem}[Equivalence for Nagata bilattice and sesquilattices with strong negation] \label{thm: nagata structures with negation}
  The (restricted) twist product functor from the \mbox{category} of (cyclic pointed) twistable pairs of residuated lattices to the \mbox{category} of Nagata bilattices (of restricted Nagata sesquilattices) with strong negation is right adjoint to the untwist functor. The unit of the adjunction is the map ${m \mapsto \pair{\sigma m}{\gamma m}}$, the counit is the inverse of the iso\-morphism $\pair{\embedS}{\embedM}$.
\end{theorem}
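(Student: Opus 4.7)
The plan is to bootstrap from two earlier results that share the same unit and counit maps: Theorem~\ref{thm: nagata rls bilat} establishes the categorical equivalence between cyclic pointed residuated $\ell$-bimodules and Nagata bilattices (restricted Nagata sesquilattices), while Theorem~\ref{thm: nagata rl with negation adjunction} establishes the adjunction between twistable pairs of residuated lattices and Nagata lattices with strong negation. A Nagata bilattice (restricted Nagata sesquilattice) with strong negation is by Definition~\ref{thm: twist 2} simply both structures at once, so it suffices to verify that the twist product and untwist functors are well-defined on the combined categories and that the existing unit and counit continue to respect the additional operations.

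First I would verify that the (restricted) twist product of a (cyclic pointed) twistable pair of residuated lattices, expanded by $\oplus$ and (in the unrestricted case) $\otimes$, lands in the target category: the bilattice/sesquilattice equations of Definition~\ref{def: nagata rsls} hold componentwise by direct inspection, since $\oplus$ and $\otimes$ are designed to interact with $\sigma$ and $\gamma$ in exactly the prescribed way, while the strong-negation axioms were verified in Fact~\ref{fact: twist is nagata}. On the other side, Fact~\ref{fact: untwist} already guarantees that the untwist functor lands in the category of twistable pairs of residuated lattices, and no further structure needs to be extracted there.

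Next I would assemble the unit and counit. The unit $m \mapsto \pair{\sigma m}{\gamma m}$ preserves $\oplus$ and $\otimes$ and is surjective by the argument of Theorem~\ref{thm: nagata rls bilat}; it preserves strong negation by the identity $\pair{\rho \gamma m}{\lambda \sigma m} = \pair{\sigma \nagneg m}{\gamma \nagneg m}$ established in the proof of Theorem~\ref{thm: nagata posemigroup with negation adjunction}; and the remaining residuated-lattice structure is preserved by Theorem~\ref{thm: nagata rl with negation adjunction}. Dually, the counit $\pair{\embedSplus}{\embedSminus}^{-1}$ is an isomorphism of twistable pairs whose preservation of $\lambda$, $\rho$, $\cdotminus$, $\bsminus$, and $\sminus$ was verified in the proof of Theorem~\ref{thm: nagata posemigroup with negation adjunction}. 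Since the unit is an isomorphism on the nose (thanks to the $\oplus, \otimes$ structure, as in Theorem~\ref{thm: nagata rls bilat}), the adjunction in fact refines to a categorical equivalence, justifying the section title.

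The main obstacle to watch for is a possible interaction between $\nagneg$ and the bilattice operations that would force extra axioms into Definition~\ref{thm: twist 2} beyond those already present in its two parent definitions. However, because $\oplus$ and $\otimes$ are pinned down by the componentwise lattice structure of $\alg{S}_{+} \times \alg{S}_{-}$ (with one order reversed), while $\nagneg$ simply swaps components via $\lambda$ and $\rho$, identities such as $\nagneg(x \oplus y) = \nagneg x \otimes \nagneg y$ hold automatically from the componentwise description and need not be separately axiomatised. Thus no new verifications are required, and the theorem reduces to packaging the two earlier results.
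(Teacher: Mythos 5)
Your proposal is correct and follows essentially the same route as the paper, whose entire proof is to combine the previously established results; the only (harmless, indeed slightly better) difference is that you bootstrap from Theorem~\ref{thm: nagata rls bilat} together with Theorem~\ref{thm: nagata rl with negation adjunction}, so that surjectivity of the unit and hence the equivalence promised in the theorem's title comes for free, whereas the paper cites Theorem~\ref{thm: nagata rls} and Theorem~\ref{thm: nagata rl with negation adjunction}. The additional checks you list (the unit preserving $\oplus$, $\otimes$ and $\nagneg$; the counit preserving $\lambda$, $\rho$, and the multiplication and divisions of the negative sort) are exactly those already supplied by the cited theorems and by Facts~\ref{fact: twist is nagata} and~\ref{fact: untwist}, so nothing is missing.
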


\begin{proof}
  This follows from Theorem~\ref{thm: nagata rls} and Theorem~\ref{thm: nagata rl with negation adjunction}.
\end{proof}

  Observe that the strong negation in a (restricted) twist product of a twistable pair of posemigroups is involutive if and only if the maps $\lambda$ and $\rho$ satisfy $\rho \circ \lambda = \idmap_{\alg{S}_{+}}$, i.e.\ if and only if $\lambda$ and $\rho$ are mutually inverse isomorphisms of posemigroups. In that case, the two-sorted structure in effect collapses into a single sort: up to isomorphism we can take $\alg{S}_{-} = \alg{S}_{+}$ and $\lambda = \idmap_{\alg{S}_{+}} = \rho$. This is the case covered by Busaniche et al.~\cite{busaniche+galatos+marcos22}, though their equational axiomatization is of course more parsimonious than ours, being tailored to this particular case.

\section{Twist products and bimonoids of fractions}

  In this final section, we relate the twist product construction and the algebras of fractions introduced in~\cite{galatos+prenosil22}. In particular, we show that the algebra of fractions of a (Boolean-pointed) Brouwerian algebra $\alg{B}$ can be constructed from the (restricted) twist product of $\alg{B}$ as the nucleus image of a conucleus image.

  Let us first explain what we mean by an algebra of fractions. This construction generalizes the Abelian group of fractions of a cancellative commutative monoid to \emph{commutative bimonoids}, defined as posets equipped with two commutative monoidal operations (addition and multiplication) which are isotone in each argument and which satisfy the inequality
\begin{align*}
  x \cdot (y + z) \leq (x \cdot y) + z.
\end{align*}
  Note that multiplication is \emph{not} required to distribute over addition here. An element $y$ of a commutative bimonoid is called a \emph{complement} of $x$ if
\begin{align*}
  & x \cdot y \leq \0, & & \1 \leq x + y,
\end{align*}
  where $\0$ and $\1$ are the monoidal units for addition and multiplication. If $x$ has a complement, it is unique and it will be denoted by $\overline{x}$. A \emph{complemented} commutative bimonoid is one where each element has a complement.

  Complements in bimonoids subsume both multiplicative inverses in monoids and Boolean complements in distributive lattices as special cases. In the former case, we view a monoid as a bimonoid with $x + y \assign x \cdot y$ and $\0 \assign \1$, in which case $\overline{x} = x^{-1}$ if it exists. In the latter case, we view a bounded distributive lattice as a bimonoid with $x \cdot y \assign x \wedge y$ and $x + y \assign x \vee y$, in which case $\overline{x} = \neg x$ if it exists.

\begin{fact}
  Complemented commutative bimonoids (as ordered algebras equipped with the unary operation $x \mapsto \overline{x}$) are term-equivalent to commutative involutive residuated pomonoids: in one direction we take
\begin{align*}
  & \overline{x} \assign x \rightarrow \0, & x + y \assign \overline{\overline{y} \cdot \overline{x}}.
\end{align*}
  In the other direction, we take
\begin{align*}
   x \rightarrow y & \assign \overline{x} + y.
\end{align*}
  in the other, writing $x \rightarrow y$ for $x \bs y = y / x$.
\end{fact}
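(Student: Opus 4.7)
The plan is to structure the proof in three parts: (i) verify that the translation from a commutative involutive residuated pomonoid (CIRP) yields a complemented commutative bimonoid (CCB); (ii) verify that the converse translation yields a CIRP; and (iii) verify that the two translations are mutually inverse at the level of operations.

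For (i), I would check each CCB axiom in turn. Commutativity, associativity, and the unit law $x + 0 = x$ for $+ \assign \overline{\overline{y} \cdot \overline{x}}$ follow from the corresponding properties of $\cdot$ together with involutivity $\overline{\overline{x}} = x$, using also $\overline{0} = 1$. Isotonicity of $+$ reduces to antitonicity of $\overline{\cdot}$, which is a standard consequence of residuation. The bimonoid inequality $x \cdot (y + z) \leq x \cdot y + z$ is a short residuation calculation exploiting $x \cdot \overline{x y} \leq \overline{y}$. Finally, $\overline{x}$ is the complement of $x$: the condition $x \cdot \overline{x} \leq 0$ is residuation, and $1 \leq x + \overline{x} = \overline{\overline{x} \cdot x}$ holds since $\overline{x} \cdot x \leq 0$. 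For (ii), I would verify that $x \rightarrow y \assign \overline{x} + y$ is a residual of $\cdot$. The forward direction uses $y \leq y \cdot (x + \overline{x}) \leq y x + \overline{x} \leq z + \overline{x}$ whenever $x y \leq z$; the converse applies the bimonoid inequality in the form $x y \leq x (\overline{x} + z) \leq x \overline{x} + z \leq z$. Involutivity $\overline{\overline{x}} = x$ is immediate from the uniqueness of complements, since $x$ is itself a complement of $\overline{x}$ (the complement relation being symmetric in its two arguments).

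For (iii), the composition CIRP-to-CCB-to-CIRP returns the original $\rightarrow$ by a short computation using involutivity. The composition CCB-to-CIRP-to-CCB returns the operation $x \oplus y \assign \overline{\overline{y} \cdot \overline{x}}$, and we must show $\oplus = +$. My plan here is the following: first, verify that $\oplus$ itself satisfies all the CCB axioms on the given data (with the same $\cdot$, $\overline{\cdot}$, $0$, $1$); next, observe that $x \rightarrow_{\oplus} y \assign \overline{x} \oplus y$ is also a residual of $\cdot$ by the same calculation as in (ii); then by the uniqueness of residuals of $\cdot$ in a pomonoid, $\overline{x} + y = \overline{x} \oplus y$ for all $x, y$; and since $\overline{\cdot}$ is surjective (being involutive), this forces $+ = \oplus$, which is exactly the De Morgan identity $\overline{x + y} = \overline{x} \cdot \overline{y}$.

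The main technical step I expect to require care is the verification that $\oplus$ satisfies the bimonoid inequality $x \cdot (y \oplus z) \leq x y \oplus z$; this unfolds via $x \cdot \overline{x y} \leq \overline{y}$ together with a commutativity rearrangement, eventually reducing to $\overline{\overline{z} \cdot \overline{y}} \cdot \overline{z} \cdot \overline{y} \leq 0$. Once this is in place, the uniqueness-of-residuals appeal closes the round-trip argument cleanly, and no separate direct proof of De Morgan from the bare bimonoid axioms is needed.
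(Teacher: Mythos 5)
Your proposal is correct. Note that the paper states this Fact without proof (it is treated as a routine/known term-equivalence), so there is no official argument to compare against; your three-part verification (each translation lands in the right class, and the two translations are mutually inverse) is the natural way to establish it, and every step you outline goes through: the complement axioms plus the bimonoid inequality give the residuation equivalence $x\cdot y\leq z\iff y\leq\overline{x}+z$, involutivity $\overline{\overline{x}}=x$ follows from symmetry and uniqueness of complements, and your reduction of the bimonoid inequality for the defined addition to $\overline{\overline{z}\cdot\overline{y}}\cdot\overline{z}\cdot\overline{y}\leq\0$ is the right computation. Two remarks on part (iii). First, your uniqueness-of-residuals route is legitimate, but be aware that the unfolding of $x\cdot(y\oplus z)\leq(x\cdot y)\oplus z$ silently uses the equivalence $a\cdot b\leq\0\iff a\leq\overline{b}$; this is available because part (ii) is done first, since there $x\to\0=\overline{x}+\0=\overline{x}$, so the equivalence is just residuation at $\0$ (it can also be proved directly from the complement axioms). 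Second, you can skip re-verifying all the bimonoid axioms for $\oplus$: once part (ii) turns the complemented bimonoid into a commutative involutive residuated pomonoid with $x\to y=\overline{x}+y$ and $\overline{x}=x\to\0$, the computation from your first round trip, $\overline{x}\oplus y=\overline{\overline{y}\cdot\overline{\overline{x}}}=\overline{\overline{y}\cdot x}=x\to y$, applies verbatim inside that pomonoid, giving $\overline{x}\oplus y=\overline{x}+y$ directly; surjectivity of $x\mapsto\overline{x}$ then yields $\oplus=+$, i.e.\ the De Morgan identity, exactly as you conclude, but with less bookkeeping.
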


  Given an embedding $\iota$ of a commutative bimonoid $\alg{A}$ into a complemented commutative bimonoid $\alg{B}$, we say that an element of $\alg{B}$ is a \emph{fraction} if it has the form $\iota x + \overline{\iota y}$ for some $x, y \in \alg{A}$, and we say that it is a \emph{co-fraction} if it has the form $\iota x \cdot \overline{\iota y}$. A \emph{commutative bimonoid of fractions} of a commutative bimonoid $\alg{A}$ is then a complemented commutative bimonoid $\alg{B}$ with an embedding of bimonoids $\iota\colon \alg{A} \to \alg{B}$ such that each element of $\alg{B}$ is a fraction, or equivalently each element of $\alg{B}$ is a co-fraction. Not every commutative bimonoid $\alg{A}$ has such a $\alg{B}$, but if it does, then this $\alg{B}$ is unique up to a unique isomorphism commuting with $\iota$.

  Since the elements of the twist product $\alg{L}^{\bowtie}_{\0}$ may be interpreted as formal fractions over $\algL$, one might hope that the embedding $\iota_{\algL}\colon \pair{a}{a \rightarrow \0}$ will witness $\alg{L}^{\bowtie}_{\0}$ as a commutative bimonoid of fractions of $\algL$ in the technical sense defined above. This fails to hold: not every element of $\alg{L}^{\bowtie}_{\0}$ is a fraction (co-fraction) with respect to~$\embedL$. Nonetheless, we can sometimes identify the commutative bimonoid of fractions of $\alg{L}$ inside $\alg{L}^{\bowtie}_{\0}$. We do so by restricting to elements which are both fractions and co-fractions. The restriction takes place in two steps. We first find a conucleus $\mu$ on $\alg{L}^{\bowtie}_{\0}$ whose image consists precisely of the co-fractions of $\alg{L}^{\bowtie}$. We then find a nucleus $\nu$ on $(\alg{L}^{\bowtie}_{\0})_{\mu}$ whose image consists precisely of the elements of $(\alg{L}^{\bowtie}_{\0})_{\mu}$ which are fractions, i.e.\ of the elements of $\alg{L}^{\bowtie}$ which are both fractions and co-fractions.

  We show that this strategy works out in case $\algL$ is a Boolean-pointed Brouwerian algebra. Recall that a \emph{Brouwerian algebra} is a distributive lattice equipped with a binary operation $x \rightarrow y$ denoting the relative pseudocomplement of $x$ with respect to $y$. Thus bounded Brouwerian algebras are precisely Heyting algebras. A \emph{Boolean-pointed} Brouwerian algebra is one equipped with a constant $\0$ such that the interval $[\0, \1]$ is a Boolean lattice. Equivalently, it is a pointed Brouwerian algebra which satisfies the equation $\intneg \intneg x = x \vee \0$. Here we use the abbreviation $\intneg a \assign a \rightarrow \0$.

  Each pointed Brouwerian algebra may be viewed as a bimonoid where
\begin{align*}
  x \cdot y & \assign x \wedge y, & x + y \assign (\0 \rightarrow (x \wedge y)) \wedge (x \vee y).
\end{align*}
  In particular, if $\0 = \top$, addition reduces to $x + y \assign x \wedge y$. (If the Brouwerian algebra has a bottom element and $\0 = \bot$, addition reduces to $x + y \assign x \vee y$, but such algebras will typically not have a commutative bimonoid of fractions.)

  Let $\alg{B}$ be a Boolean-pointed Brouwerian algebra in the following. It was shown in~\cite{galatos+prenosil22} that $\alg{B}$ has a commutative bimonoid of fractions when viewed as a bimonoid with the above operations. We now provide an alternative construction of this commutative bimonoid of fractions.

  We first prove several lemmas which will help us with the arithmetic of Boolean-pointed Brouwerian algebras.

\begin{lemma}
  In each Boolean-pointed Brouwerian algebra
\begin{align*}
  \text{if $\0 \wedge a = \0 \wedge b$ and $\intneg a = \intneg b$, then $a = b$.}
\end{align*}
\end{lemma}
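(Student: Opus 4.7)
The plan is to reduce the hypothesis about $\intneg a = \intneg b$ to a statement about joins with $\0$, and then exploit distributivity to pin $a$ and $b$ down.

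First, I would apply $\intneg$ to both sides of $\intneg a = \intneg b$ to obtain $\intneg \intneg a = \intneg \intneg b$. Using the defining equation $\intneg \intneg x = x \vee \0$ of a Boolean-pointed Brouwerian algebra, this rewrites as
\begin{align*}
  a \vee \0 = b \vee \0.
\end{align*}
Thus the hypotheses become the pair of equalities $a \wedge \0 = b \wedge \0$ and $a \vee \0 = b \vee \0$.

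From $a \leq a \vee \0 = b \vee \0$ and distributivity of the underlying lattice I would then compute
\begin{align*}
  a = a \wedge (b \vee \0) = (a \wedge b) \vee (a \wedge \0) = (a \wedge b) \vee (b \wedge \0) \leq b,
\end{align*}
where the penultimate step uses $a \wedge \0 = b \wedge \0$. By the symmetric computation, $b \leq a$, so $a = b$.

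The argument is short and presents no real obstacle; the only substantive point is recognizing that the Boolean-pointedness $\intneg \intneg x = x \vee \0$ is exactly what turns equality of pseudocomplements into equality of joins with $\0$, at which point the claim becomes the standard distributive-lattice fact that elements with the same meet and join with a fixed element, paired with one of them lying below the join, must coincide.
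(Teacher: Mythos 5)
Your proof is correct and takes essentially the same route as the paper: both arguments reduce the claim to the distributive-lattice cancellation fact (equal meets and joins with $\0$ force equality), after converting $\intneg a = \intneg b$ into $a \vee \0 = b \vee \0$ via Boolean-pointedness. The only cosmetic difference is that you invoke the equational form $\intneg\intneg x = x \vee \0$ and spell out the distributivity computation, whereas the paper cites the cancellation property and argues through uniqueness of complements in the Boolean interval $[\0,\1]$ — equivalent formulations the paper itself identifies.
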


\begin{proof}
  In each distributive lattice, if $x \wedge a = x \wedge b$ and $x \vee a = x \vee b$, then $a = b$. In particular, $a = b$ whenever $\0 \wedge a = \0 \wedge b$ and $\0 \vee a = \0 \vee b$. Because the interval $[\0, \1]$ is Boolean, the latter equality is equivalent to $\intneg (\0 \vee a) = \intneg (\0 \vee b)$, i.e.\ to $\intneg a = \intneg b$. 
\end{proof}

\begin{lemma}
  In each Boolean-pointed Brouwerian algebra $\intneg a \rightarrow a = a$.
\end{lemma}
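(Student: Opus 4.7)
The plan is to use the preceding lemma: it suffices to show that $\0 \wedge (\intneg a \rightarrow a) = \0 \wedge a$ and $\intneg(\intneg a \rightarrow a) = \intneg a$, from which $\intneg a \rightarrow a = a$ follows immediately. Note that $a \leq \intneg a \rightarrow a$ is automatic (it is one direction of the residuation law applied to $a \wedge \intneg a \leq \0 \leq a$), which gives the easy directions $\0 \wedge a \leq \0 \wedge (\intneg a \rightarrow a)$ and (by antitonicity of $\intneg$) $\intneg(\intneg a \rightarrow a) \leq \intneg a$. So the work is in the reverse inequalities.

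For $\0 \wedge (\intneg a \rightarrow a) \leq \0 \wedge a$, the key observation is that $\0 \leq \intneg a$, which holds since $\intneg a = a \rightarrow \0 \geq \0$ (equivalently, $\intneg a$ lives in the Boolean interval $[\0,\1]$). Using this, one rewrites
\begin{align*}
  \0 \wedge (\intneg a \rightarrow a) = \0 \wedge \intneg a \wedge (\intneg a \rightarrow a) \leq \0 \wedge a,
\end{align*}
where the last step applies the residuation inequality $\intneg a \wedge (\intneg a \rightarrow a) \leq a$.

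For $\intneg a \leq \intneg(\intneg a \rightarrow a)$, by residuation this is equivalent to $\intneg a \wedge (\intneg a \rightarrow a) \leq \0$. But $\intneg a \wedge (\intneg a \rightarrow a) \leq a \wedge \intneg a \leq \0$, where the final inequality $a \wedge \intneg a \leq \0$ is just the residuation-style fact $a \wedge (a \rightarrow \0) \leq \0$, valid in any Brouwerian algebra.

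There is no real obstacle; the whole argument is a direct application of residuation together with the trivial observation $\0 \leq \intneg a$ needed to absorb $\0$ into a meet with $\intneg a$, which is what lets the preceding lemma close the gap. The Boolean-pointed hypothesis enters only implicitly, via the preceding lemma (whose proof uses it).
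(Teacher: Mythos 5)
Your proof is correct and follows essentially the same route as the paper: both establish the two equalities $\0 \wedge (\intneg a \rightarrow a) = \0 \wedge a$ and $\intneg(\intneg a \rightarrow a) = \intneg a$ (the paper via the relativization identity $\0 \wedge (x \rightarrow y) = \0 \wedge ((\0 \wedge x) \rightarrow (\0 \wedge y))$, you via absorbing $\intneg a$ using $\0 \leq \intneg a$) and then conclude by the preceding cancellation lemma. One small slip worth fixing: your parenthetical justification of $a \leq \intneg a \rightarrow a$ via the chain $a \wedge \intneg a \leq \0 \leq a$ invokes $\0 \leq a$, which is false in general since $\0$ need not be the bottom element; the inequality you need follows trivially from $a \wedge \intneg a \leq a$ by residuation, so the argument is unaffected.
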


\begin{proof}
  We have $\0 \wedge (\intneg a \rightarrow a) = \0 \wedge ((\0 \wedge \intneg a) \rightarrow (\0 \wedge a)) = \0 \wedge (\0 \rightarrow (\0 \wedge a)) = \0 \wedge a$. Clearly $a \leq \intneg a \rightarrow a$ implies $\intneg (\intneg a \rightarrow a) \leq \intneg a$. Conversely, $\intneg a \wedge (\intneg a \rightarrow a) = \intneg a \wedge a \leq \0$, hence $\intneg a \leq \intneg (\intneg a \rightarrow a)$.
\end{proof}

\begin{lemma}
  In each Boolean-pointed Brouwerian algebra
\begin{align*}
  a + b = (\intneg a \rightarrow b) \wedge (\intneg b \rightarrow a).
\end{align*}
\end{lemma}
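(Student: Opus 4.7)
The plan is to prove both inequalities in $a + b = (\intneg a \rightarrow b) \wedge (\intneg b \rightarrow a)$ directly, writing $R$ for the right-hand side and recalling that by definition $a + b = (\0 \rightarrow (a \wedge b)) \wedge (a \vee b)$. Two basic facts will do all the work: (I) $\0 \leq \intneg a$, which is immediate from $a \wedge \0 \leq \0$, and (II) $a \vee \intneg a = \1$. To establish (II), I would use that $[\0, \1]$ is Boolean, so $a \vee \0$ and $\intneg(a \vee \0) = \intneg a \wedge \intneg \0 = \intneg a$ are complements there, giving $(a \vee \0) \vee \intneg a = \1$; by (I) this collapses to $a \vee \intneg a = \1$.

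For the direction $a + b \leq R$, by the symmetry between $a$ and $b$ it is enough to show $(a+b) \wedge \intneg a \leq b$. The definition of $a + b$ gives two useful upper bounds, $a + b \leq a \vee b$ and $(a+b) \wedge \0 \leq a \wedge b$. Distributing using the former,
$$(a+b) \wedge \intneg a = \bigl((a+b) \wedge \intneg a \wedge a\bigr) \vee \bigl((a+b) \wedge \intneg a \wedge b\bigr),$$
the first disjunct is bounded by $(a+b) \wedge \0 \leq a \wedge b \leq b$ using $a \wedge \intneg a \leq \0$ and the second bound, while the second disjunct is trivially $\leq b$.

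For the direction $R \leq a + b$, I would separately show $R \leq a \vee b$ and $\0 \wedge R \leq a \wedge b$; together these are equivalent to $R \leq a + b$. For the first, (II) and distributivity give
$$R = R \wedge (a \vee \intneg a) = (R \wedge a) \vee (R \wedge \intneg a) \leq a \vee b,$$
since $R \leq \intneg a \rightarrow b$ forces $R \wedge \intneg a \leq b$. For the second, (I) yields $\0 = \0 \wedge \intneg a$, hence $\0 \wedge R \leq \intneg a \wedge (\intneg a \rightarrow b) \leq b$, and symmetrically $\0 \wedge R \leq a$.

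The one non-mechanical step is fact (II), where the Boolean-\emph{pointed} hypothesis genuinely enters the argument; everything else is routine use of residuation and lattice distributivity, and the two preceding lemmas play no role.
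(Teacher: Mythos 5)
Your proof is correct, but it follows a genuinely different route from the paper. The paper deduces the identity from the section's first lemma (an element of a Boolean-pointed Brouwerian algebra is determined by $\0 \wedge x$ and $\intneg x$): it checks that $\0 \wedge (a+b)$ and $\0 \wedge \bigl((\intneg a \rightarrow b) \wedge (\intneg b \rightarrow a)\bigr)$ both equal $\0 \wedge a \wedge b$, and then shows the two negations coincide, the harder half being $(\intneg a \rightarrow b) \wedge (\intneg b \rightarrow a) \leq \intneg\intneg(a+b) = (a+b) \vee \0$, so Booleanness enters through the identity $\intneg\intneg x = x \vee \0$. You instead prove the two inequalities directly, and Booleanness enters only once, through the generalized excluded middle $a \vee \intneg a = \1$, which you then combine with distributivity and residuation; the preceding lemmas are indeed not needed. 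Your approach is more self-contained and makes visible exactly where the Boolean-pointed hypothesis is used (only in $R \leq a \vee b$), while the paper's approach reuses the separation lemma that it needs anyway for the remaining facts of the section, so each is reasonable in context. One small point to tighten: in your derivation of (II), saying that $a \vee \0$ and $\intneg a$ ``are complements'' in $[\0,\1]$ slightly presupposes the conclusion; the clean version is to take the Boolean complement $c$ of $a \vee \0$ in $[\0,\1]$, note $c \wedge (a \vee \0) = \0$ gives $c \leq \intneg(a \vee \0) = \intneg a$, and conclude $\1 = (a \vee \0) \vee c \leq a \vee \intneg a$ using $\0 \leq \intneg a$ (alternatively, (II) follows equationally from $\intneg\intneg x = x \vee \0$ applied to $x = a \vee \intneg a$).
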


\begin{proof}
  Let $x = (\intneg a \rightarrow b) \wedge (\intneg b \rightarrow a)$. Then $\0 \wedge (a + b) = \0 \wedge a \wedge b$ and $\0 \wedge x = \0 \wedge a \wedge b$. Also, $(\0 \rightarrow (a \wedge b)) \wedge a \wedge \intneg x = (\0 \rightarrow b) \wedge a \wedge \intneg (\intneg a \rightarrow b) = (\0 \rightarrow b) \wedge a \wedge \intneg (\0 \rightarrow b) \leq \0$, and likewise $(\0 \rightarrow (a \wedge b)) \wedge b \wedge \intneg x \leq \0$, hence $\intneg x \leq \intneg (a+b)$. It remains to prove that $\intneg (a + b) \leq \intneg x$, or equivalently $x \leq \intneg \intneg (a + b) = (a+b) \vee \0$. This is equivalent to the conjunction of $x \leq (\0 \rightarrow (a \wedge b)) \vee \0$ and $x \leq a \vee b \vee \0 = \intneg \intneg (a \vee b)$. But $\0 \wedge x = \0 \wedge a \wedge b \leq a \wedge b$, hence indeed $x \leq (\0 \rightarrow (a \wedge b)) \vee \0$. Finally, $\intneg (a \vee b) \wedge x = \intneg a \wedge \intneg b \wedge (\intneg a \rightarrow b) \wedge (\intneg b \rightarrow a) = \intneg a \wedge \intneg b \wedge b \wedge a \leq \0$, hence indeed $x \leq \intneg \intneg (a \vee b)$.
\end{proof}

\begin{fact}
  The following are equivalent for each pair $\pair{a}{b} \in \alg{B}^{\bowtie}_{\0}$:
\begin{enumerate}[(i)]
\item $\pair{a}{b}$ is a co-fraction of $\alg{B}^{\bowtie}_{\0}$,
\item $\pair{a}{b} = \embedL a \circ \overline{\embedL b}$,
\item $b = a \rightarrow b$.
\end{enumerate}
\end{fact}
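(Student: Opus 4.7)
The plan is to close the cycle (ii) $\Rightarrow$ (i) $\Rightarrow$ (iii) $\Rightarrow$ (ii). The implication (ii) $\Rightarrow$ (i) is immediate from the definition of co-fraction.

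The backbone is an explicit formula for $\embedL c \circ \overline{\embedL d}$ for arbitrary $c, d \in \alg{B}$. First I will identify $\overline{\embedL d} = \pair{\intneg d}{d}$: this pair lies in $\alg{B}^{\bowtie}_{\0}$ because Boolean-pointedness gives $d \leq d \vee \0 = \intneg \intneg d$, and it satisfies the bimonoid complement laws with respect to $\0_{\mathrm{bim}} = \embedL(\0) = \pair{\0}{\1}$ and $\1_{\mathrm{bim}} = \embedL(\1) = \pair{\1}{\0}$ (using the addition formula $x + y = (\intneg x \rightarrow y) \wedge (\intneg y \rightarrow x)$ from the preceding lemma). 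Applying the twist product multiplication together with the identity $\intneg d \rightarrow \intneg c = \intneg(c \wedge \intneg d)$ (which reduces $\intneg d \rightarrow (c \rightarrow \0)$ via exportation) yields
\begin{align*}
  \embedL c \circ \overline{\embedL d} = \pair{c \wedge \intneg d}{\intneg(c \wedge \intneg d) \wedge (c \rightarrow d)}.
\end{align*}

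For (iii) $\Rightarrow$ (ii), assume $\pair{a}{b} \in \alg{B}^{\bowtie}_{\0}$ (equivalently $a \wedge b \leq \0$, so $a \leq \intneg b$ and $b \leq \intneg a$) and $b = a \rightarrow b$. Substituting $c = a$ and $d = b$ into the formula, the first component simplifies to $a \wedge \intneg b = a$ and the second to $\intneg a \wedge (a \rightarrow b) = \intneg a \wedge b = b$. For (i) $\Rightarrow$ (iii), suppose $\pair{a}{b} = \embedL c \circ \overline{\embedL d}$ for some $c, d \in \alg{B}$, so $a = c \wedge \intneg d$ and $b = \intneg a \wedge (c \rightarrow d)$. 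The identity $a \rightarrow \intneg a = \intneg a$ combined with $a \wedge c = a$ gives $a \rightarrow b = \intneg a \wedge ((a \wedge c) \rightarrow d) = \intneg a \wedge (a \rightarrow d)$; moreover the preliminary identity $\intneg d \rightarrow d = d$ yields $a \rightarrow d = (c \wedge \intneg d) \rightarrow d = c \rightarrow (\intneg d \rightarrow d) = c \rightarrow d$, whence $a \rightarrow b = \intneg a \wedge (c \rightarrow d) = b$.

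The main obstacle is the correct identification of the bimonoid complement $\overline{\embedL d}$ inside $\alg{B}^{\bowtie}_{\0}$ and the verification of the complement laws, which relies on both the addition formula and the cancellation lemma proven just before. Once this is settled, the remaining steps are routine applications of the three preliminary lemmas on Boolean-pointed Brouwerian arithmetic, in particular the crucial identity $\intneg d \rightarrow d = d$ which is used to collapse $(c \wedge \intneg d) \rightarrow d$ to $c \rightarrow d$.
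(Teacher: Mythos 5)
Your proposal is correct and follows essentially the same route as the paper: both hinge on the explicit computation of $\embedL c \circ \overline{\embedL d}$ (you keep the second component as $\intneg(c \wedge \intneg d) \wedge (c \rightarrow d)$ where the paper simplifies it to $c \rightarrow d$) and on the identity $\intneg d \rightarrow d = d$ for the implication from (i) to (iii). The only difference is cosmetic: you verify the identification $\overline{\embedL d} = \pair{\intneg d}{d}$ explicitly, which the paper takes for granted from the involutive structure of $\alg{B}^{\bowtie}_{\0}$.
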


\begin{proof}
  If $b = a \rightarrow b$, then $\iota_{\alg{B}}(a) \circ \overline{\iota_{\alg{B}}(b)} = \pair{a \wedge \intneg b}{a \rightarrow b} = \pair{a}{b}$.  Conversely, suppose that $\pair{a}{b} = \iota_{\alg{B}}(x) \circ \overline{\iota_{\alg{B}}(y)} = \pair{x \wedge \intneg y}{x \rightarrow y}$ for some $x, y \in \alg{B}$. Then $a \rightarrow b = (x \wedge \intneg y) \rightarrow (x \rightarrow y) = \intneg y \rightarrow (x \rightarrow y) = x \rightarrow (\intneg y \rightarrow y) = x \rightarrow y = a$.
\end{proof}

\begin{fact}
  The following are equivalent for each pair $\pair{a}{b} \in \alg{B}^{\bowtie}_{\0}$:
\begin{enumerate}[(i)]
\item $\pair{a}{b}$ is a fraction of $\alg{B}^{\bowtie}_{\0}$,
\item $\pair{a}{b} = \embedL a + \overline{\embedL b}$,
\item $a = b \rightarrow a$.
\end{enumerate}
\end{fact}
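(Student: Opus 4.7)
This statement is the order-dual of the preceding fact, and my plan is to prove it by a direct computation mirroring that proof, once the bimonoid addition on $\alg{B}^{\bowtie}_{\0}$ has been made explicit. The preliminary step is to verify that $\alg{B}^{\bowtie}_{\0}$ is an involutive CRL with dualizing element $\pair{\0}{\1}$ and that the resulting bimonoid complement is the coordinate swap $\overline{\pair{a}{b}} = \pair{b}{a}$: the residuation formula gives $\pair{a}{b} \bs \pair{\0}{\1} = \pair{\intneg a \wedge b}{a}$, which collapses to $\pair{b}{a}$ on $\alg{B}^{\bowtie}_{\0}$ because $b \leq \intneg a$ there. Unfolding $x + y = \overline{\overline{x} \circ \overline{y}}$ then yields $\pair{a}{b} + \pair{c}{d} = \pair{(b \rightarrow c) \wedge (d \rightarrow a)}{b \wedge d}$, and inserting $\embedL a = \pair{a}{\intneg a}$ and $\overline{\embedL b} = \pair{\intneg b}{b}$ produces the key expression $\embedL a + \overline{\embedL b} = \pair{(\intneg a \rightarrow \intneg b) \wedge (b \rightarrow a)}{\intneg a \wedge b}$.

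With this formula in hand, (iii) $\Rightarrow$ (ii) is short: if $b \rightarrow a = a$, then the second component of $\embedL a + \overline{\embedL b}$ collapses to $b$ by the restricted condition, and the first component $(\intneg a \rightarrow \intneg b) \wedge a$ equals $a$ because $a \leq \intneg a \rightarrow \intneg b$ reduces via residuation to $a \wedge \intneg a \leq \intneg b$, which holds since $a \wedge \intneg a \leq \0$. The step (ii) $\Rightarrow$ (i) is immediate. For (i) $\Rightarrow$ (iii), I would assume $\pair{a}{b} = \embedL x + \overline{\embedL y}$ for some $x, y$, so that $a = (\intneg x \rightarrow \intneg y) \wedge (y \rightarrow x)$ and $b = \intneg x \wedge y$, and compute $b \rightarrow a$ conjunct by conjunct. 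Repeated use of the identities $(p \wedge q) \rightarrow r = q \rightarrow (p \rightarrow r)$ and $p \rightarrow (p \rightarrow r) = p \rightarrow r$, the equality $y \rightarrow \intneg y = \intneg y$, and the lemma $\intneg p \rightarrow p = p$ proved earlier reduce the two conjuncts to $\intneg x \rightarrow \intneg y$ and $y \rightarrow x$ respectively, so $b \rightarrow a = a$.

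The main obstacle is just keeping this Brouwerian arithmetic tidy; the one substantive ingredient is the lemma $\intneg p \rightarrow p = p$. A shorter alternative which I might mention as a remark is the following duality argument: since $\overline{\cdot}$ interchanges fractions with co-fractions via De Morgan and sends $\pair{a}{b}$ to $\pair{b}{a}$, the three conditions here translate termwise under complement into the three conditions of the preceding fact applied to $\pair{b}{a}$, from which the equivalences follow immediately.
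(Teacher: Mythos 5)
Your argument is correct, but note that your closing remark is in fact the paper's entire proof: the paper disposes of this fact in one line by applying the preceding co-fraction fact to $\pair{b}{a} = \overline{\pair{a}{b}}$, using that the complement is the coordinate swap, that $\pair{b}{a}$ again lies in $\alg{B}^{\bowtie}_{\0}$ (the defining condition $a \wedge b \leq \0$ is symmetric), and that complementation interchanges fractions and co-fractions. Your primary route is a genuinely different, self-contained computation: you derive the swap description of the complement from $\pair{a}{b} \bs \pair{\0}{\1} = \pair{\intneg a \wedge b}{a}$, unfold the bimonoid addition to $\pair{a}{b} + \pair{c}{d} = \pair{(b \rightarrow c) \wedge (d \rightarrow a)}{b \wedge d}$, and then check (iii) $\Rightarrow$ (ii) $\Rightarrow$ (i) $\Rightarrow$ (iii) by Brouwerian arithmetic; I checked these steps and they go through (in particular $\intneg a \wedge b = b$ on $\alg{B}^{\bowtie}_{\0}$, $a \leq \intneg a \rightarrow \intneg b$ because $a \wedge \intneg a \leq \0 \leq \intneg b$, and the reduction of the two conjuncts of $b \rightarrow a$ via $y \rightarrow \intneg y = \intneg y$ and the lemma $\intneg p \rightarrow p = p$). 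What your computation buys is an explicit formula for $+$ on the restricted twist product and independence from the co-fraction fact; what the paper's duality argument buys is brevity and the absence of any new arithmetic, since it reuses the work already done for co-fractions --- which is why the paper proves that case by hand and gets this one for free. If you do keep the direct route, you can shorten it slightly by observing $b \rightarrow a \leq \intneg a \rightarrow \intneg b$, so that $\embedL a + \overline{\embedL b} = \pair{b \rightarrow a}{\intneg a \wedge b}$, the exact dual of the paper's co-fraction computation.
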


\begin{proof}
  This follows immediately from the previous fact applied to $\pair{b}{a} = \overline{\pair{a}{b}}$.
\end{proof}

\begin{fact}
  The map $\mu\colon \alg{B}^{\bowtie}_{\0} \to \alg{B}^{\bowtie}_{\0}$ defined as
\begin{align*}
  \mu\colon \pair{a}{b} \mapsto \pair{a}{a \rightarrow b}
\end{align*}
  is a conucleus on $\alg{B}^{\bowtie}_{\0}$ such that $\mu(x \circ y) = \mu(x) \circ \mu(y)$ whose image is the set of all co-fractions of~$\alg{B}^{\bowtie}_{\0}$.
\end{fact}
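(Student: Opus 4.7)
The plan is to verify four things in turn: (i) $\mu$ indeed maps $\alg{B}^{\bowtie}_{\0}$ into itself, (ii) $\mu$ is an interior operator, (iii) $\mu$ satisfies the strengthened equation $\mu(x \circ y) = \mu(x) \circ \mu(y)$ (which entails the conucleus inequality $\mu x \circ \mu y \leq \mu(x \circ y)$), and (iv) the image of $\mu$ coincides with the set of co-fractions.

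For (i), observe that in the present (involutive) case we have $\lambda = \rho = \idmap_{\alg{B}}$ and $\cdot = \wedge$, so the membership condition for $\pair{a}{b} \in \alg{B}^{\bowtie}_{\0}$ reduces to $a \wedge b \leq \0$. Since $a \wedge (a \rightarrow b) = a \wedge b$, the pair $\pair{a}{a \rightarrow b}$ also satisfies this, so $\mu$ is well-defined. For (ii), recall that in the twist product order the second coordinate is reversed, so contractivity $\mu \pair{a}{b} \leq \pair{a}{b}$ amounts to the familiar $b \leq a \rightarrow b$; idempotence reduces to $a \rightarrow (a \rightarrow b) = a \rightarrow b$; and isotonicity combines monotonicity of $\rightarrow$ in its consequent with antitonicity in its antecedent.

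For (iii), I would expand both sides using the twist multiplication $\pair{a}{b} \circ \pair{c}{d} = \pair{a \wedge c}{(c \rightarrow b) \wedge (a \rightarrow d)}$. The second components on both sides reduce, via the standard Brouwerian identities $x \rightarrow (y \rightarrow z) = (x \wedge y) \rightarrow z$ and $x \rightarrow (y \wedge z) = (x \rightarrow y) \wedge (x \rightarrow z)$, to the common expression $((a \wedge c) \rightarrow b) \wedge ((a \wedge c) \rightarrow d)$. Finally, (iv) is immediate: since $\mu$ is idempotent, its image equals its fixpoint set $\set{\pair{a}{b}}{b = a \rightarrow b}$, which is precisely the set of co-fractions by the preceding equivalence. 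No step is a real obstacle; the only mild care needed is keeping track of the reversed order in the second coordinate of the twist product when verifying contractivity and isotonicity.
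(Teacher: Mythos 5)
Your proposal is correct and follows essentially the same route as the paper: expand the twist multiplication and reduce both sides of $\mu(x \circ y) = \mu(x) \circ \mu(y)$ to $\pair{a \wedge c}{(a \wedge c) \rightarrow (b \wedge d)}$ via the standard Brouwerian identities, note that $\mu$ is an interior operator, and identify the image with the co-fractions using the preceding fact's characterization $b = a \rightarrow b$. Your explicit check that $\mu$ lands in $\alg{B}^{\bowtie}_{\0}$ (via $a \wedge (a \rightarrow b) = a \wedge b \leq \0$) is a small addition the paper leaves implicit, but otherwise the arguments coincide.
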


\begin{proof}
  The map $\mu$ is clearly an interior operator. To prove that it is a conucleus, let us compare $\mu \pair{a}{b} \circ \mu \pair{c}{d}$ and $\mu (\pair{a}{b} \circ \pair{c}{d})$:
\begin{align*}
  \mu \pair{a}{b} \circ \mu \pair{c}{d} & = \pair{a}{a \rightarrow b} \circ \pair{c}{c \rightarrow d} \\ & = \pair{a \wedge c}{(a \rightarrow (c \rightarrow d)) \wedge (c \rightarrow (a \rightarrow b))} \\ & = \pair{a \wedge c}{(a \wedge c) \rightarrow (b \wedge d)},
\end{align*}
\begin{align*}
  \mu (\pair{a}{b} \circ \pair{c}{d}) & = \mu \pair{a \wedge c}{(a \rightarrow d) \wedge (c \rightarrow b)} \\ & = \pair{a \wedge c}{((a \wedge c) \rightarrow (a \rightarrow d)) \wedge ((a \wedge c) \rightarrow (c \rightarrow b))} \\ & = \pair{a \wedge c}{((a \wedge c) \rightarrow d) \wedge ((a \wedge c) \rightarrow b)} \\ & = \pair{a \wedge c}{(a \wedge c) \rightarrow (b \wedge d)}.
\end{align*}
  The claim that the image of this conucleus consists precisely of all co-fractions of $\alg{B}^{\bowtie}_{\0}$ follows immediately from the previous fact.
\end{proof}

\begin{fact}
  The map $\nu\colon (\alg{B}^{\bowtie}_{\0})_{\mu} \to (\alg{B}^{\bowtie}_{\0})_{\mu}$ defined as
\begin{align*}
  \nu\colon \pair{a}{b} \mapsto \pair{b \rightarrow a}{b}
\end{align*}
  is a nucleus on $(\alg{B}^{\bowtie}_{\0})_{\mu}$ whose image is the set of all elements of~$(\alg{B}^{\bowtie}_{\0})_{\mu}$ which are fractions in $\alg{B}^{\bowtie}_{\0}$, i.e.\ all elements of $\alg{B}^{\bowtie}_{\0}$ which are fractions and co-fractions in~$\alg{B}^{\bowtie}_{\0}$.
\end{fact}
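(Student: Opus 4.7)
The proof splits into four stages: (i) check that $\nu$ sends $(\alg{B}^{\bowtie}_{\0})_{\mu}$ to itself; (ii) verify that $\nu$ is a closure operator; (iii) establish the nucleus inequality $\nu(m) \circ \nu(n) \leq \nu(m \circ n)$; and (iv) identify the image of $\nu$ as exactly those elements that are simultaneously fractions and co-fractions. Stages (i), (ii) and (iv) are short Heyting-algebra manipulations; the substantive content is in (iii).

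For (i), given $\pair{a}{b} \in (\alg{B}^{\bowtie}_{\0})_{\mu}$, so $a \wedge b \leq \0$ and $a \rightarrow b = b$, the pair $\pair{b \rightarrow a}{b}$ still lies in $\alg{B}^{\bowtie}_{\0}$ because $(b \rightarrow a) \wedge b = a \wedge b \leq \0$, and it remains a co-fraction via
\[
(b \rightarrow a) \rightarrow b = (b \rightarrow a) \rightarrow (a \rightarrow b) = \bigl((b \rightarrow a) \wedge a\bigr) \rightarrow b = a \rightarrow b = b,
\]
using $a \leq b \rightarrow a$ to collapse the meet. For (ii), extensivity amounts to $a \leq b \rightarrow a$, idempotency to $b \rightarrow (b \rightarrow a) = (b \wedge b) \rightarrow a = b \rightarrow a$, and monotonicity is obvious. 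For (iv), the image of $\nu$ is by definition the set of pairs $\pair{p}{q}$ satisfying $p = q \rightarrow p$, which is the defining condition of a fraction; conversely any pair that is simultaneously a fraction and a co-fraction is plainly fixed by $\nu$.

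The main obstacle is (iii). Writing $m = \pair{a}{b}$, $n = \pair{c}{d}$ and expanding both sides, the nucleus inequality breaks into two componentwise claims in $\alg{B}$. The second-component inequality (recall that the twist order reverses the second component),
\[
(a \rightarrow d) \wedge (c \rightarrow b) \leq \bigl((b \rightarrow a) \rightarrow d\bigr) \wedge \bigl((d \rightarrow c) \rightarrow b\bigr),
\]
follows routinely from the Heyting chain $(c \rightarrow b) \wedge (b \rightarrow a) \leq c \rightarrow a$ combined with the co-fraction identity $c \rightarrow d = d$, and symmetrically via $a \rightarrow b = b$. The first-component inequality reduces to the sharper claim
\[
x \;:=\; (b \rightarrow a) \wedge (a \rightarrow d) \wedge (d \rightarrow c) \wedge (c \rightarrow b) \;\leq\; a \wedge c,
\]
which I would isolate as a separate lemma. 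Iterated chaining of implications combined with the bimodule hypotheses $a \wedge b \leq \0$ and $c \wedge d \leq \0$ quickly yields $x \wedge a, x \wedge b, x \wedge c, x \wedge d \leq \0$, hence $x \leq \intneg a \wedge \intneg b \wedge \intneg c \wedge \intneg d$. To bridge from this bound to $x \leq a \wedge c$, I would combine the co-fraction identities $a \rightarrow b = b$, $c \rightarrow d = d$ with the Boolean-pointed lemmas $\intneg\intneg z = z \vee \0$ and $\intneg z \rightarrow z = z$ established earlier in the section. This last bridge is the one place where the argument genuinely relies on Boolean-pointedness rather than mere Brouwerian structure, and is the step I expect to require more than purely mechanical Heyting-algebra calculation.
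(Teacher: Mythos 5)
Stages (i), (ii), and (iv) of your plan are correct and match the paper, as does your reduction of (iii) to the two componentwise inequalities and your handling of the second component. The genuine gap is in your treatment of the first-component inequality, the ``sharper claim'' $x \leq a \wedge c$. You derive $x \leq \intneg a \wedge \intneg b \wedge \intneg c \wedge \intneg d$ and then declare that the bridge from this bound to $x \leq a \wedge c$ requires Boolean-pointedness and is not a mechanical Heyting computation. Neither half of that claim is right: the bound $x \leq \intneg a$ does not in general point toward $x \leq a$ (indeed, together with the goal it would force $x \leq \0$, a stronger and irrelevant conclusion), so the route through negations is a dead end, and the paper closes the gap without invoking Boolean-pointedness at all. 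The argument is purely Brouwerian plus the co-fraction hypotheses: chaining $a \rightarrow d$, $d \rightarrow c$, $c \rightarrow b$ gives $x \leq a \rightarrow b$, so
\begin{align*}
  x \leq (a \rightarrow b) \wedge (b \rightarrow a) = b \wedge (b \rightarrow a) \leq a,
\end{align*}
where the equality uses the co-fraction identity $a \rightarrow b = b$ for the domain $(\alg{B}^{\bowtie}_{\0})_{\mu}$ and the last step is modus ponens. Symmetrically $x \leq c$. The lesson is that the co-fraction hypotheses should be applied directly to the chained implication $a \rightarrow b$ rather than converted into negation bounds; once you do that, the entire nucleus verification stays inside plain Brouwerian algebra, and Boolean-pointedness plays no role in this particular fact.
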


\begin{proof}
  If $\pair{a}{b} \in \alg{B}^{\bowtie}_{\0}$, then also $\pair{b \rightarrow a}{b} \in \alg{B}^{\bowtie}_{\0}$, since $(b \rightarrow a) \wedge b = a \wedge b \leq \0$. The pair $\nu \pair{a}{b}$ is a fraction, since $b \rightarrow (b \rightarrow a) = b \rightarrow a$. If $\pair{a}{b}$ is a co-fraction, i.e.\ if $a \rightarrow b = b$, then $\pair{b \rightarrow a}{b}$ is also a co-fraction, since $(b \rightarrow a) \rightarrow b = (b \rightarrow a) \rightarrow (a \rightarrow b) = (a \wedge (b \rightarrow a)) \rightarrow b = a \rightarrow b = b$. The map is clearly a closure operator. To prove that it is a nucleus, let us compare $\nu \pair{a}{b} \circ \nu \pair{c}{d}$ and $\nu(\pair{a}{b} \circ \pair{c}{d})$:
\begin{align*}
  \nu \pair{a}{b} \circ \nu \pair{c}{d} & = \pair{b \rightarrow a}{b} \circ \pair{d \rightarrow c}{d} \\ & = \pair{(b \rightarrow a) \wedge (d \rightarrow c)}{((b \rightarrow a) \rightarrow d) \wedge ((d \rightarrow c) \rightarrow b)} \\ ~ \\
  \nu (\pair{a}{b} \circ \pair{c}{d}) & = \nu \pair{a \wedge c}{(a \rightarrow d) \wedge (c \rightarrow b)} \\ & = \pair{((a \rightarrow d) \wedge (c \rightarrow b)) \rightarrow (a \wedge c)}{(a \rightarrow d) \wedge (c \rightarrow b)}
\end{align*}
  Then the inequality
\begin{align*}
  (b \rightarrow a) \wedge (d \rightarrow c) \leq ((a \rightarrow d) \wedge (c \rightarrow b)) \rightarrow (a \wedge c)
\end{align*}
  is equivalent by residuation to
\begin{align*}
  (b \rightarrow a) (d \rightarrow c) (a \rightarrow d) (c \rightarrow b) \leq a \wedge c,
\end{align*}
  which holds because
\begin{align*}
  (a \rightarrow d) \wedge (d \rightarrow c) \wedge (c \rightarrow b) \wedge (b \rightarrow a) \leq (a \rightarrow b) \wedge (b \rightarrow a) = b \wedge (b \rightarrow a) \leq a, \\
  (c \rightarrow b) \wedge (b \rightarrow a) \wedge (a \rightarrow d) \wedge (d \rightarrow c) \leq (c \rightarrow d) \wedge (d \rightarrow c) = d \wedge (d \rightarrow c) \leq c,
\end{align*}
  using the assumption that $\pair{a}{b}$ and $\pair{c}{d}$ are co-fractions. Similarly, the inequality
\begin{align*}
  (a \rightarrow d) \wedge (c \rightarrow b) \leq ((b \rightarrow a) \rightarrow d) \wedge ((d \rightarrow c) \rightarrow b)
\end{align*}
  holds because by the same assumption
\begin{align*}
  (c \rightarrow b) \wedge (b \rightarrow a) \wedge (a \rightarrow d) & \leq c \rightarrow d \leq d, \\
  (a \rightarrow d) \wedge (d \rightarrow c) \wedge (c \rightarrow b) & \leq a \rightarrow b \leq b. \qedhere
\end{align*}
\end{proof}

\begin{theorem}[Constructing algebras of fractions from twist products]
  $(\alg{B}^{\bowtie}_{\0})_{\mu\nu}$ is a commutative bimonoid of fractions of $\alg{B}$ with the embedding
\begin{align*}
  \embedB\colon a \mapsto \pair{a}{a \rightarrow \0}.
\end{align*}
\end{theorem}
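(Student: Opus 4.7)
First I would check that $\embedB(a) = \pair{a}{\intneg a}$ lies in $(\alg{B}^{\bowtie}_{\0})_{\mu\nu}$: it is $\mu$-fixed because $a \rightarrow \intneg a = a \rightarrow (a \rightarrow \0) = \intneg a$, and it is $\nu$-fixed because $\intneg a \rightarrow a = a$ by the lemma already proved. Order-reflection is immediate from the componentwise twist order (the first coordinate alone witnesses $a \leq b$), so $\embedB$ is an order embedding into $(\alg{B}^{\bowtie}_{\0})_{\mu\nu}$.

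Next I would establish that $(\alg{B}^{\bowtie}_{\0})_{\mu\nu}$ is a complemented commutative bimonoid. As the nucleus image of the conucleus image of the residuated pomonoid $\alg{B}^{\bowtie}_{\0}$, it is automatically a commutative residuated pomonoid with multiplication $\nu(x \circ y)$, unit $\pair{\1}{\0}$, and residuals $\mu(x \rightarrow y)$. The twist-product involution $\pair{a}{b} \mapsto \pair{b}{a}$ restricts to this image since the three defining conditions $a \wedge b \leq \0$, $b = a \rightarrow b$, and $a = b \rightarrow a$ are symmetric in $a$ and $b$, and a short residuation check confirms $\pair{a}{b} \rightarrow_{\mu\nu} \pair{c}{d} = \overline{\pair{a}{b} \cdot_{\mu\nu} \overline{\pair{c}{d}}}$. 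So we have a commutative involutive residuated pomonoid, equivalently a complemented commutative bimonoid, with zero $\pair{\0}{\1}$ and unit $\pair{\1}{\0}$.

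Then I would verify that $\embedB$ is a bimonoid homomorphism. For multiplication, a direct computation gives $\embedB(a) \circ \embedB(b) = \pair{a \wedge b}{\intneg(a \wedge b)} = \embedB(a \wedge b)$, which already lies in the image, so $\embedB(a) \cdot_{\mu\nu} \embedB(b) = \embedB(a \wedge b)$. For addition, using the involutive form $x + y = \overline{\overline{x} \cdot \overline{y}}$, the representation $a + b = (\intneg a \rightarrow b) \wedge (\intneg b \rightarrow a)$ from the preceding lemma, and the Heyting identity $x \rightarrow \intneg x = \intneg x$, a short computation yields $\embedB(a) +_{\mu\nu} \embedB(b) = \embedB(a + b)$. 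The constants $\embedB(\0) = \pair{\0}{\1}$ and $\embedB(\1) = \pair{\1}{\0}$ are precisely the bimonoid zero and unit.

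Finally I would show that every $\pair{a}{b} \in (\alg{B}^{\bowtie}_{\0})_{\mu\nu}$ equals $\embedB(a) \cdot_{\mu\nu} \overline{\embedB(b)}$. Computing $\embedB(a) \circ \overline{\embedB(b)} = \pair{a \wedge \intneg b}{(a \rightarrow b) \wedge (\intneg b \rightarrow \intneg a)}$ and simplifying using $a \leq \intneg b$ (from $a \wedge b \leq \0$), $a \rightarrow b = b$ (from the $\mu$-condition), and the Heyting inequality $b \leq \intneg b \rightarrow \intneg a$, one obtains exactly $\pair{a}{b}$; applying $\nu$ leaves this unchanged since $b \rightarrow a = a$ by the $\nu$-condition. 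The involutive identity $\overline{x \cdot y} = \overline{x} + \overline{y}$ then yields $\pair{a}{b} = \embedB(a) + \overline{\embedB(b)}$ as well, so every element of $(\alg{B}^{\bowtie}_{\0})_{\mu\nu}$ is simultaneously a fraction and a co-fraction with respect to $\embedB$. The main obstacle is not any single deep step but the bookkeeping: at each turn one must invoke the correct characterization of membership in the twist product, in the image of $\mu$, and in the image of $\nu$, together with the appropriate Heyting identity.
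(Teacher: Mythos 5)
Your proposal is correct and follows essentially the same route as the paper: identify the complementation of $(\alg{B}^{\bowtie}_{\0})_{\mu\nu}$ with the swap map via the dualizing element $\pair{\0}{\1}$, check that $\embedB$ is a bimonoid embedding (products componentwise, addition via $\overline{\overline{x}\cdot\overline{y}}$ and the lemma $a+b=(\intneg a \rightarrow b)\wedge(\intneg b \rightarrow a)$), and show every element $\pair{a}{b}$ of the image equals $\embedB a \circ_{\nu}\overline{\embedB b}$. The steps you defer as ``short checks'' (the residuation computation $\pair{a}{b}\bs_{\mu\nu}\pair{\0}{\1}=\pair{b}{a}$ and the addition computation) are precisely the computations the paper carries out explicitly, and they do go through.
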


\begin{proof}
  We know that $(\alg{B}^{\bowtie}_{\0})_{\mu\nu}$ is a residuated lattice. To prove that it is a cyclic involutive residuated lattice, observe that the dualizing element of $\alg{B}^{\bowtie}_{\0}$, namely $\pair{\0}{\1}$, is both a fraction and a co-fraction. We show that $\pair{\0}{\1}$ is a dualizing element of~$(\alg{B}^{\bowtie}_{\0})_{\mu\nu}$. The residual $\pair{a}{b} \bs_{\mu\nu} \pair{\0}{\1}$ computed in $(\alg{B}^{\bowtie}_{\0})_{\mu\nu}$ is $\mu(\pair{a}{b} \bs \pair{\0}{\1}) = \mu \pair{(a \rightarrow \0) \wedge (\1 \rightarrow b)}{a \wedge \1} = \mu \pair{b}{a} = \pair{b}{a}$. Similarly, $\mu(\pair{\0}{\1} / \pair{a}{b}) = \pair{b}{a}$.

  Each element of $(\alg{B}^{\bowtie}_{\0})_{\mu\nu}$ is a co-fraction with respect to the map $\embedB$, as required: $\embedB a \circ_{\nu} \overline{\embedB b} = \pair{a}{\intneg a} \circ_{\nu} \pair{\intneg b}{b} = \nu \pair{a \wedge \intneg b}{(a \rightarrow b) \wedge (\intneg b \rightarrow \intneg b)} = \nu \pair{a}{a \rightarrow b} = \nu \pair{a}{b} = \pair{a}{b}$ for each $\pair{a}{b} \in (\alg{B}^{\bowtie}_{\0})_{\mu\nu}$.

  The map $\embedB$ is clearly an order embedding and $\embedB a$ is both a co-fraction and a fraction, since $\intneg a \rightarrow a = a$. We know that $\embedB$ preserves products as a map $\embedB\colon \alg{B} \to \alg{B}^{\bowtie}_{\0}$, therefore it also preserves products as a map $\embedB\colon \alg{B} \to (\alg{B}^{\bowtie}_{\0})_{\mu\nu}$: we have $\embedB a \circ_{\nu} \embedB b = \nu(\embedB a \circ \embedB b) = \nu (\embedB (a \wedge b)) = \embedB(a \wedge b)$. The map also sends the constants $\1$ and $\0$ to $\pair{\1}{\0}$ and $\pair{\0}{\1}$. It remains to show that it preserves addition:
\begin{align*}
  \overline{\embedB a +_{\nu} \embedB b} & = \overline{\embedB a} \circ_{\nu} \overline{\embedB b} \\
  & = \overline{\pair{a}{\intneg a}} \circ_{\nu} \overline{\pair{b}{\intneg b}} \\
  & = \nu (\pair{\intneg a}{a} \circ \pair{\intneg b}{b}) \\
  & = \nu \pair{\intneg a \wedge \intneg b}{(\intneg a \rightarrow b) \wedge (\intneg b \rightarrow a)} \\
  & = \nu \pair{\intneg (a \vee b)}{a + b} \\
  & = \pair{(a+b) \rightarrow \intneg (a \vee b)}{a+b} \\
  & = \pair{\intneg ((a+b) \wedge (a \vee b))}{a + b} \\
  & = \pair{\intneg (a+b)}{a+b}, \\
  \overline{\embedB (a + b)} & = \overline{\pair{a+b}{\intneg (a+b)}} \\
  & = \pair{\intneg (a+b)}{a+b},
\end{align*}
  therefore $\embedB(a+b) = \embedB a + \embedB b$.
\end{proof}

\section*{Acknowledgments}

The work of the first author was funded by the grant 2021 BP 00212 of the grant agency AGAUR of the Generalitat de Catalunya.


\end{document}